\title{New 2-closed groups that are not \\ automorphism groups of digraphs}
\author{John Bamberg, Michael Giudici, Jacob P. Smith}
\date{}
\affil{Centre for the Mathematics of Symmetry and Computation \\
Department of Mathematics and Statistics, \\
The University of Western Australia, \\
35 Stirling Highway, Perth, WA 6009, Australia. \\
john.bamberg@uwa.edu.au, michael.giudici@uwa.edu.au, jacob.smith@research.uwa.edu.au.
}
\newtheorem{theorem}{Theorem}[section]
\newtheorem{lemma}[theorem]{Lemma}
\newtheorem{proposition}[theorem]{Proposition}
\newtheorem{definition}[theorem]{Definition}
\newcommand{\AGL}{\mathrm{AGL}}
\newcommand{\Aut}{\mathrm{Aut}}
\newcommand{\Cay}{\mathrm{Cay}}
\renewcommand{\dim}{\mathrm{dim}}
\newcommand{\GF}{\mathrm{GF}}
\newcommand{\GL}{\mathrm{GL}}
\newcommand{\PG}{\mathrm{PG}}
\newcommand{\PGL}{\mathrm{PGL}}
\newcommand{\Sym}{\mathrm{Sym}}
\begin{document}

\maketitle

\begin{abstract}
In this paper we extend the construction of Giudici, Morgan and Zhou \cite{ref:giudici-primitive} to give the first known examples of nonregular, $2$-closed permutation groups of rank greater than $4$ that are not the automorphism group of any digraph. We also show that this construction only gives examples for four particular primes.

\noindent\textbf{2020 Mathematics Subject Classification:} 20B25, 05C25.
\end{abstract}

\section{Introduction}

The \emph{orbitals} of a permutation group $G$ acting on a set $\Omega$ are the induced orbits of $G$ on $\Omega \times \Omega$. In 1969, Wielandt \cite{ref:wielandt-invarient} introduced the \emph{$2$-closure} $G^{\left(2 \right)}$ of a permutation group $G$ on a set $\Omega$, defined as the largest permutation group on $\Omega$ that has the same orbitals as $G$. A permutation group is said to be \emph{$2$-closed} if it is equal to its $2$-closure. A permutation group is \emph{regular} if it is transitive and all its point stabilisers are trivial.

A \emph{digraph} $\Gamma$ is a structure consisting of a set $V{\left(\Gamma \right)}$ of vertices, along with a set of arcs $E \subseteq V^2$ not containing $\left(v, v \right)$ for any vertex $v$. A \emph{graph} is defined similarly, but with edges rather than arcs, where each edge is a $2$-subset of $V$. Automorphism groups of graphs and digraphs are always $2$-closed, but not every $2$-closed permutation group is the automorphism group of a digraph. In this paper we give new examples of such $2$-closed groups.

The classification of the finite regular permutation groups that are not the automorphism group of any graph was an area of active research throughout the 1960s and '70s, and was completed by Godsil \cite{ref:godsil-nonsolvable} in 1978. Babai \cite{ref:babai-drr} later showed that only five of these groups are not the automorphism group of any digraph.

Few papers have studied the \emph{nonregular} permutation groups that are not the automorphism group of any digraph. Ming-Yao Xu \cite{ref:xu-regular-subgroups} motivated the search for these groups in 2008. Defining $\mathcal{N}_2 \mathcal{R}$ to be the set of degrees of all the $2$-closed transitive permutation groups with no regular subgroups, and $\mathcal{NC}$ to be the set of all orders of vertex-transitive non-Cayley graphs, Xu claimed that in order to determine $\mathcal{N}_2 \mathcal{R} \setminus \mathcal{NC}$, ``we should first find nonregular 2-closed groups that are not the full automorphism groups of (di)graphs." Giudici, Morgan and Zhou \cite{ref:giudici-primitive} found the first examples of such groups in 2023. They gave three infinite families of nonregular groups that are not the automorphism group of any digraph, along with several isolated examples. The groups they found all have rank $4$, where the \emph{rank} of a permutation group is its number of orbitals. Moreover, they fully completed the classification of the finite, nonregular, primitive permutation groups of rank at most $4$, other than $1$-dimensional affine groups, that are not the automorphism group of any digraph.

Another motivation for the study of 2-closed groups that are not the automorphism group of a digraph is the Polycirculant Conjecture. Originally, Maru\v{s}i\v{c} \cite{ref:marusic-vertex} asked if the automorphism group of a vertex-transitive digraph always contains a semiregular permutation, that is, a nontrivial permutation with all cycles having the same length. This question was later extended by Klin \cite{ref:cameron-research} to the class of all vertex-transitive 2-closed groups and is now known as the Polycirculant Conjecture. See \cite{ref:cameron-transitive} and \cite{ref:arezoomand-polycirculant} for more information and recent results. Understanding the 2-closed groups that are not the automorphism group of a digraph yields information about the extent to which the Polycirculant Conjecture is more general than Maru\v{s}i\v{c}'s original question.

One of the infinite families given by Giudici, Morgan and Zhou \cite{ref:giudici-primitive} is the family containing $G{\left(m, 3 \right)}$, as defined in Definition \ref{def:G(m, q)}, for all integers $m \geqslant 2$. In this paper we show that this family can be generalised to groups of higher rank with the same properties, thus giving the first known nonregular permutation groups of rank greater than $4$ that are not the automorphism group of any digraph.

The following theorem is the main result of this paper.

\begin{theorem}
Let $m \geqslant 2$ be an integer and let $p$ be a prime. Then $G{\left(m, p \right)}$, as defined in Definition \ref{def:G(m, q)}, is a $2$-closed, nonregular permutation group that is not the automorphism group of any digraph if and only if $p \in \left\{3, 5, 7, 13 \right\}$.
\end{theorem}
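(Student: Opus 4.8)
The plan is to work with the description of $G = G{\left(m, p\right)}$ as an affine group $\V \rtimes H$ with point stabiliser $H \leqslant \GL{\left(\V\right)}$, where $\V$ is the natural module over $\GF{\left(p\right)}$. The facts that $G$ is $2$-closed and nonregular are established for all relevant $m$ and $p$ by the structural lemmas preceding the theorem, so the entire content of the equivalence lies in the digraph condition. Since $G$ is affine and contains the regular translation subgroup $\V$, every $G$-invariant digraph is a Cayley digraph $\Cay{\left(\V, S\right)}$ whose connection set $S \subseteq \V \setminus \{0\}$ is a union of orbits of $H$ on the nonzero vectors, and the nontrivial orbitals of $G$ are exactly these orbits. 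Thus $G$ is the automorphism group of some digraph if and only if there is such an $S$ with $\Aut{\left(\Cay{\left(\V, S\right)}\right)} = G$, and the task becomes controlling how large this automorphism group is forced to be.

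Next I would set up the combinatorial skeleton. Let $\mathcal{O}$ be the set of nontrivial orbitals, let $N = N_{\GL{\left(\V\right)}}{\left(H\right)}$, and let $P = N/H$ be the permutation group induced by $N$ on $\mathcal{O}$. Any element of $N$ whose image in $P$ fixes a subset $S \subseteq \mathcal{O}$ setwise preserves $\Cay{\left(\V, S\right)}$ while lying outside $H$, so it enlarges the automorphism group. Consequently, if every subset of $\mathcal{O}$ has nontrivial stabiliser in $P$ — equivalently, if $P$ has no regular orbit on the power set $2^{\mathcal{O}}$ — then $\Aut{\left(\Cay{\left(\V, S\right)}\right)} \supsetneq G$ for every $S$, and $G$ is not the automorphism group of any digraph. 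The converse requires more: to realise $G$ as an automorphism group one must exhibit a connection set $S$ with $\GL{\left(\V\right)}_S = H$ (in particular with trivial stabiliser in $P$) and then show that $\Cay{\left(\V, S\right)}$ has no further, non-affine, automorphisms.

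The arithmetic heart is to compute $r = \lvert \mathcal{O} \rvert$ and the group $P$ explicitly as functions of $p$ (matching the known rank $4$, so $r = 3$, when $p = 3$), and then to decide for which $p$ the group $P$ has a regular orbit on $2^{\mathcal{O}}$. The primes $3, 5, 7$ should fall to a crude counting bound: when the largest binomial coefficient $\binom{r}{\lfloor r/2 \rfloor}$ is smaller than $\lvert P \rvert$, no $P$-orbit on subsets can have size $\lvert P \rvert$, so no regular orbit exists and $G$ is not a digraph automorphism group. For all sufficiently large $p$ the same counting goes the other way and one expects a regular orbit, and the prime $p = 11$ can be treated directly; the genuinely delicate point is $p = 13$, where the counting bound no longer forces the conclusion yet $P$ still has no regular orbit on $2^{\mathcal{O}}$. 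I expect this borderline case to be the main obstacle in the ``only if'' direction, likely needing an explicit or computational analysis of the subset stabilisers in $P$, and it is presumably the reason the exceptional set is exactly $\{3, 5, 7, 13\}$.

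The second main obstacle lies in the reverse implication, where one must prove that for every prime $p \notin \{3, 5, 7, 13\}$ the group $G$ really is the full automorphism group of a digraph. Producing an ``asymmetric'' connection set $S$ with $\GL{\left(\V\right)}_S = H$ for infinitely many primes calls for a uniform existence argument for regular orbits on subsets once $r$ is large enough, rather than a finite check. More seriously, having fixed such an $S$, one still has to rule out automorphisms of $\Cay{\left(\V, S\right)}$ that do not normalise $\V$; the clean way is to show that $\V$ remains the unique minimal normal subgroup of $\Aut{\left(\Cay{\left(\V, S\right)}\right)}$, so that this group is again affine with stabiliser $\GL{\left(\V\right)}_S = H$, forcing $\Aut{\left(\Cay{\left(\V, S\right)}\right)} = G$. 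Establishing that no such extra symmetry appears — via primitivity together with the O'Nan--Scott theorem, or a Cayley-isomorphism-type argument tailored to these particular $H$ — is where I expect the hardest work to be, and combining it with the orbit count completes both directions of the equivalence.
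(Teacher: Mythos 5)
Your reduction of the whole problem to the action of $P = N_{\GL(V)}(H)/H$ on the set $\mathcal{O}$ of orbitals, and to the (non)existence of a regular orbit of $P$ on $2^{\mathcal{O}}$, is where the plan breaks. A linear automorphism of $\Cay(V,S)$ need not normalise $H$, so ``every subset of $\mathcal{O}$ has nontrivial stabiliser in $P$'' is a sufficient but far from necessary condition for $G$ not to be a digraph group --- and it actually fails for the primes you need. For $p=7$ the image of $D_8$ in $\PGL(2,7)$ is a Klein four-group whose three involutions have fixed-point sets on $\PG(1,7)$ of sizes $2,2,0$, so no element of order $3$ normalises it; one finds $P \cong C_2$, generated by the class of $\begin{bsmallmatrix} 1 & 1 \\ 1 & -1 \end{bsmallmatrix}$, which swaps $\Delta_A$ and $\Delta_1$, so the singleton $\{\Delta_A\}$ already lies in a regular orbit of $P$. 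For $p=13$ one gets $P \cong S_3$, and the subset $\{\Delta_A, \Delta_2\}$ has trivial stabiliser (any element fixing $\Delta_A$ is a transposition on $\{\Delta_A,\Delta_1,\Delta_5\}$, and every such element interchanges $\Delta_2$ and $\Delta_3$), so your expectation that ``$P$ still has no regular orbit'' in the delicate case $p=13$ is false. The unions in question are nonetheless not rigid, but the witnesses lie outside $N$: $\Gamma_A$ and $\Gamma_1$ are Hamming graphs $H(2,q^m)$ with automorphism group $S_{q^m} \wr S_2$ (Lemmas \ref{lm:Gamma_A,B} and \ref{lm:Gamma_1,i}), $\Delta_B$ is preserved by all of $\GL(2,q) \circ \GL(m,q)$, and the remaining unions are preserved by explicit matrices such as $\begin{bsmallmatrix} 1 & 2 \\ 2 & 1 \end{bsmallmatrix} \circ I$ that do not normalise $D_8 \circ \GL(m,q)$. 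None of these is visible to your group $P$, so the forward direction cannot be completed by your counting argument.

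The other half of the plan is also resting on unproved ground. You take $2$-closedness ``for all relevant $m$ and $p$'' as given, but it is not a formality: the paper proves it only for $p \in \{5,7,13\}$ (Theorem~\ref{thm:2-closed}), and the proof depends on Proposition~\ref{prop:automorphism-group-affine-decomp}, a clique analysis of the Cayley graph on four or six slope classes showing that every automorphism is affine and preserves the tensor decomposition $V \otimes W$. That same proposition is exactly the ingredient that disposes of the non-affine automorphisms you defer to an O'Nan--Scott or CI-type argument. Finally, for $p \notin \{3,5,7,13,17\}$ the paper does not hunt for an asymmetric union of orbitals at all: it shows via a cross-ratio computation (Lemma~\ref{lm:Lambda_lambda}) that a linear automorphism of a single $\Gamma_\lambda$ outside $G$ would force $\lambda$ to satisfy one of a short list of polynomial congruences mod $p$, and that $\lambda = 2$ and $\lambda = 4$ cannot both do so unless $p \in \{7,13\}$; hence one of $\Gamma_2$, $\Gamma_4$ is already rigid, with $p=17$ handled separately by Theorem~\ref{thm:q=17}. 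Your proposal identifies the right two obstacles, but the normaliser-orbit invariant you propose to resolve them with is the wrong one, and both halves of the equivalence would need to be rebuilt on the affine rigidity proposition and the cross-ratio analysis.
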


This result follows from Theorems \ref{thm:q=5}, \ref{thm:q=7}, \ref{thm:q=13}, \ref{thm:2-closed} and \ref{thm:q-large-prime}. If $p$ is $5$, $7$ or $13$, the group $G{\left(m, q \right)}$ has rank $5$, $5$, or $7$ respectively. It remains open whether examples exist of arbitrarily large rank. We note here that all 2-closed groups constructed in this paper contain a regular subgroup, and hence a semiregular permutation, and so do not give counterexamples to the Polycirculant Conjecture.

\subsection*{Acknowledgements}
The second author was supported by Australian Research Council Discovery Project Grant DP190101024. The third author was supported by a Hackett Scholarship from The University of Western Australia.

\section{Preliminaries}

\subsection{Graph theory}

The \emph{Hamming graph} $H{\left(d, q \right)}$, where $d$ and $q$ are positive integers, is the graph whose vertices are the $d$-tuples of elements from the set $\left\{1, 2, \ldots, q \right\}$, and in which two vertices are adjacent if and only if they differ in exactly one coordinate. By \cite[Theorem 9.2.1]{ref:brouwer-distance}, its automorphism group is $S_q \wr S_d$.

Let $G$ be a group, and let $S$ be a nonempty subset of $G \setminus \left\{1_G \right\}$. The \emph{Cayley digraph} $\Cay{\left(G, S \right)}$ is the digraph whose vertices are the elements of $G$, and in which each pair $\left(g, h \right) \in G \times G$ is an arc if and only if $g h^{-1} \in S$. The following is a simple consequence of \cite[Lemma 3.7.3]{ref:godsil-algebraic}.

\begin{lemma} \label{lm:endomorphism-automorphism}
Suppose $V$ is a vector space with a subset $S$, and let $C$ be the Cayley digraph $C = \Cay{\left(V,\, S \right)}$, where $V$ is considered as an additive group. Then any invertible linear transformation of $V$ is an automorphism of $C$ if and only if it preserves $S$.
\end{lemma}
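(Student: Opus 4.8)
The plan is to prove both directions by translating between the arc relation of $C$ and the additive structure of $V$. First I would unwind the definition: in additive notation, the pair $\left(g, h\right)$ is an arc of $C$ precisely when $g - h \in S$. The crucial observation is that an invertible linear transformation $\phi$ of $V$ is, in particular, an automorphism of the additive group $\left(V, +\right)$, so that $\phi\left(g\right) - \phi\left(h\right) = \phi\left(g - h\right)$ for all $g, h \in V$.

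For the ``if'' direction, suppose $\phi$ preserves $S$. Since $\phi$ is a bijection of the vertex set, it remains only to check that it maps arcs to arcs and non-arcs to non-arcs. Because $\phi\left(S\right) = S$ and $\phi$ is a bijection of $V$, we have $\phi\left(x\right) \in S$ if and only if $x \in S$; combining this with $\phi\left(g\right) - \phi\left(h\right) = \phi\left(g - h\right)$ shows that $\left(g, h\right)$ is an arc if and only if $\left(\phi\left(g\right), \phi\left(h\right)\right)$ is an arc. This is exactly the conclusion of \cite[Lemma 3.7.3]{ref:godsil-algebraic} applied to the group automorphism $\phi$ of $\left(V, +\right)$.

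For the ``only if'' direction, suppose $\phi$ is an automorphism of $C$. Linearity gives $\phi\left(0\right) = 0$, so $\phi$ fixes the zero vertex. The in-neighbours of $0$ are exactly $\left\{g \in V : g - 0 \in S\right\} = S$, and any digraph automorphism fixing a vertex must permute that vertex's in-neighbours among themselves. Applying $\phi$ therefore gives $\phi\left(S\right) = S$, as required.

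I do not expect a genuine obstacle here, since the statement is a direct consequence of the cited lemma together with the neighbourhood description of $S$. The only point needing a little care is recording that an invertible linear map is a group automorphism of $\left(V, +\right)$, so that \cite[Lemma 3.7.3]{ref:godsil-algebraic}---stated for group automorphisms that preserve the connection set---applies verbatim, and noting that preservation of $S$ by the \emph{bijection} $\phi$ upgrades $\phi\left(S\right) = S$ to the two-sided equivalence $\phi\left(x\right) \in S \iff x \in S$ used above.
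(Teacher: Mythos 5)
Your proof is correct and matches the paper's approach: the paper simply cites \cite[Lemma 3.7.3]{ref:godsil-algebraic} and treats the statement as an immediate consequence, which is exactly the argument you spell out (an invertible linear map is an automorphism of $\left(V,+\right)$, and the converse follows from the in-neighbourhood of $0$ being $S$). No gaps.
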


If $G$ is a permutation group acting on a set $\Omega$, then each orbital $B$ has a corresponding \emph{orbital digraph}, defined as the digraph with vertex set $\Omega$ and arc set $B$.

\subsection{Cross-ratios}

Consider a line $\ell$ in the projective space $\PG(n, F)$. If we choose \emph{reference vectors} $v, w \in F^{n+1}$ such that $\ell$ is the $2$-space generated by $v$ and $w$, then every vector in $\ell$ has the form $\lambda_1 v + \lambda_2 w$ for some $\lambda_1, \lambda_2 \in F$. Therefore all the $1$-spaces (or projective points) contained inside $\ell$ can be expressed as either $\left<w \right>$ or $\left<v + \lambda w \right>$ for some unique $\lambda \in F$. This means that if we define $P_{\left(\lambda \right)} = \left<v + \lambda w \right>$ for all $\lambda \in F$, and $P_{\left(\infty \right)} = \left<w \right>$, then each point $P$ on the line $\ell$ can be expressed uniquely in the form $P_{\left(\lambda \right)}$, where $\lambda \in F \cup \left\{\infty \right\}$.

Using the above notation, as well as the convention that division by zero yields infinity and vice versa, the \emph{cross-ratio} of four points $P_{\left(A \right)}, P_{\left(B \right)}, P_{\left(C \right)}, P_{\left(D \right)}$ on the line $\ell$ is defined as follows:
\begin{displaymath}
\mathcal{R} {\left(P_{\left(A \right)}, P_{\left(B \right)}; P_{\left(C \right)}, P_{\left(D \right)}\right)} = \frac{\left(C-A \right) \left(D-B \right)}{\left(C-B \right) \left(D-A \right)} \in F \cup \left\{\infty \right\}.
\end{displaymath}

Although this defines the cross-ratio in terms of the arbitrarily chosen reference vectors $v$ and $w$, it can be shown that the cross-ratio of any four collinear points does not depend on the choice of reference vectors (see for example \cite[Section 3.5, Theorem 1]{ref:garner-projective}).

It is well known that if $\theta \in \PGL{\left(n+1,\, F \right)}$, then $\theta$ preserves the cross-ratio of any four collinear points $P, Q, R, S \in \PG{\left(n, F \right)}$. That is to say, $\mathcal{R}{\left(P^\theta, Q^\theta; R^\theta, S^\theta \right)} = \mathcal{R}{\left(P, Q; R, S \right)}$. The following result \cite[Lemma 4.4]{ref:bamberg-analytic} illustrates how the cross-ratio $\mathcal{R} {\left(P, Q; R, S \right)}$ of any four points is affected by a reordering of the points.

\begin{lemma} \label{lm:permute-cross-ratio}
Let $P, Q, R, S$ be four collinear points in $\PG{\left(n, F \right)}$, and let $\sigma \in \Sym{\left(\left\{P, Q, R, S \right\}\right)}$ be a permutation of these points. Let $r = \mathcal{R}{\left(P, Q; R, S \right)}$, and let $r'$ be the cross-ratio of the points' images under $\sigma$, that is, $r' = \mathcal{R}{\left(P^\sigma, Q^\sigma; R^\sigma, S^\sigma \right)}$. Then the relation between $r$ and $r'$ is given in Table \ref{tbl:permuted-cross-ratios}.
\end{lemma}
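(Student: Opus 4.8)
The plan is to prove Lemma~\ref{lm:permute-cross-ratio} by reducing the full symmetric group $\Sym(\{P,Q,R,S\})$ to the action of a small set of generators on the cross-ratio, and then composing. First I would fix the value $r = \mathcal{R}(P,Q;R,S)$ and observe that since the cross-ratio is projectively invariant, I may choose a convenient coordinatisation of the four collinear points on $\ell$. Using the parametrisation introduced just before the lemma, I would assign parameters so that, say, $P = P_{(\infty)}$, $Q = P_{(0)}$, $R = P_{(1)}$, and $S = P_{(r)}$; a direct check from the defining formula $\mathcal{R}(P_{(A)},P_{(B)};P_{(C)},P_{(D)}) = \frac{(C-A)(D-B)}{(C-B)(D-A)}$ confirms this assignment yields cross-ratio $r$ (with the usual conventions on $\infty$). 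This normalisation means that computing $r'$ for any permutation $\sigma$ reduces to substituting the permuted parameters into the same formula and simplifying to a rational expression in $r$.

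The key structural step is that $\Sym(\{P,Q,R,S\}) \cong S_4$ is generated by a small number of transpositions, so it suffices to compute the effect on $r$ of two or three generating permutations and then verify that their composites reproduce the six classical values. Concretely, I would compute the effect of swapping the two points within the first pair, $(P\,Q)$, which sends $r \mapsto 1/r$; swapping the two pairs, which fixes $r$; and a transposition mixing the pairs such as $(Q\,R)$, which sends $r \mapsto 1-r$. These two nontrivial operations $r \mapsto 1/r$ and $r \mapsto 1-r$ generate a group isomorphic to $S_3$ acting on the cross-ratio, giving exactly the six values $r,\ 1/r,\ 1-r,\ 1/(1-r),\ (r-1)/r,\ r/(r-1)$; the even larger kernel (the Klein four-group of permutations fixing $r$) accounts for the collapse from $24$ permutations to $6$ values. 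I would then fill in Table~\ref{tbl:permuted-cross-ratios} by tracking, for each of the $24$ elements $\sigma$, which coset of the Klein four-group it lies in, reading off the corresponding value.

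The main obstacle, though purely bookkeeping rather than conceptual, is organising the $24$ permutations cleanly and handling the degenerate cases where a parameter equals $\infty$, $0$, or $1$ — here one must invoke the stated conventions that division by zero yields infinity and vice versa, and check that the simplified rational expressions in $r$ remain valid under these conventions. Rather than grinding through all $24$ substitutions, I would present one or two representative calculations in full (for instance verifying $r \mapsto 1/r$ and $r \mapsto 1-r$ directly from the formula), then argue that the remaining entries follow by composition since cross-ratio is compatible with composition of permutations, i.e. $(\sigma\tau)$ acts as $\sigma$ followed by $\tau$ on the value. Since the result is quoted from \cite[Lemma 4.4]{ref:bamberg-analytic}, it would also be legitimate to keep the argument brief and simply indicate that the table entries are obtained by this generator-and-composition method, deferring the exhaustive verification to the cited reference.
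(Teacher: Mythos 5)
Your approach is sound, but note that the paper does not prove Lemma~\ref{lm:permute-cross-ratio} at all: it is quoted verbatim from \cite[Lemma 4.4]{ref:bamberg-analytic}, so your proposal supplies an argument where the paper defers entirely to a citation. What you describe is the standard proof and it works: normalising to $P = P_{(\infty)}$, $Q = P_{(0)}$, $R = P_{(1)}$, $S = P_{(r)}$ is legitimate because the paper has already recorded that the cross-ratio is independent of the choice of reference vectors, and your representative computations ($\left(PQ\right) \mapsto 1/r$, $\left(QR\right) \mapsto 1-r$, the Klein four-group acting trivially) all agree with Table~\ref{tbl:permuted-cross-ratios}; the quotient $S_4/V_4 \cong S_3$ then accounts for the collapse to six values. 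The one point I would correct is your composition rule. With the paper's right-action notation $P^{\sigma\tau} = \left(P^\sigma\right)^\tau$, the induced map on cross-ratio values is \emph{contravariant}, not covariant: writing $g_\sigma$ for the map with $\mathcal{R}\left(P^\sigma, Q^\sigma; R^\sigma, S^\sigma\right) = g_\sigma\left(r\right)$, one finds $g_{\sigma\tau} = g_\sigma \circ g_\tau$ (apply $g_\tau$ to the value first). For example $\left(PQ\right)\left(QR\right) = \left(PRQ\right)$, and the table gives $\left(PRQ\right) \mapsto \frac{1}{1-r} = g_{\left(PQ\right)}\bigl(g_{\left(QR\right)}\left(r\right)\bigr)$, whereas your stated rule ``$\sigma$ followed by $\tau$ on the value'' would yield $1 - \frac{1}{r} = \frac{r-1}{r}$ and thereby interchange the two rows of the table containing the $3$-cycles. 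This is exactly the kind of bookkeeping slip you flagged as the main hazard; either fix the orientation of the composition or verify each of the six coset representatives directly by substitution, after which the argument is complete.
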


\begin{table}
\renewcommand*{\arraystretch}{1.6}
\centering
\begin{tabular}{cc}
\hline
$\sigma$ & $r'$ \\
\hline
$\left(\right)$, ${\left(PQ \right)} {\left(RS \right)}$, ${\left(PR \right)} {\left(QS \right)}$, ${\left(PS \right)} {\left(QR \right)}$ & $r$ \\
$\left(PR \right)$, $\left(QS \right)$, $\left(PQRS \right)$, $\left(PSRQ \right)$ & $\frac{r}{r-1}$ \\
$\left(PS \right)$, $\left(QR \right)$, $\left(PQSR \right)$, $\left(PRSQ \right)$ & $1-r$ \\
$\left(PQ \right)$, $\left(RS \right)$, $\left(PRQS \right)$, $\left(PSQR \right)$ & $\frac{1}{r}$ \\
$\left(PQS \right)$, $\left(PRQ \right)$, $\left(PSR \right)$, $\left(QRS \right)$ & $\frac{1}{1-r}$ \\
$\left(PQR \right)$, $\left(PRS \right)$, $\left(PSQ \right)$, $\left(QSR \right)$ & $\frac{r-1}{r}$ \\
\hline
\end{tabular}
\caption{The value of $r'$ in terms of $r$ for each $\sigma \in \Sym{\left(\left\{P, Q, R, S \right\}\right)}$, given $r = \mathcal{R}{\left(P, Q; R, S \right)}$ and $r' = \mathcal{R}{\left(P^\sigma, Q^\sigma; R^\sigma, S^\sigma \right)}$.}
\label{tbl:permuted-cross-ratios}
\end{table}

\subsection{Tensor product decompositions}

Suppose $V$ and $W$ are $n$- and $m$-dimensional vector spaces over a field $F$, with bases $\left\{v_1, v_2, \ldots, v_n \right\}$ and $\left\{w_1, w_2, \ldots, w_m \right\}$ respectively. If $A \in \GL{\left(V \right)}$ and $B \in \GL{\left(W \right)}$, denote by $A \circ B$ the linear transformation of $V \otimes W$ mapping $v_i \otimes w_j$ to $v_i^A \otimes w_j^B$ for all $1 \leqslant i \leqslant n$ and $1 \leqslant i \leqslant m$. Denote by $\GL{\left(V \right)} \circ \GL{\left(W \right)}$ the group of all such linear transformations.

The following is from Lemma 4.4.5 of \cite{ref:xu-aschbacher}.

\begin{lemma} \label{lm:preserves-simple-tensors-preserves-decomp}
Let $X = V \otimes W$. If $\theta \in \GL{\left(X \right)}$ preserves the set of all simple tensors, then $\theta$ preserves the tensor product decomposition $V \otimes W$.
\end{lemma}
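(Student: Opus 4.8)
The plan is to exploit the geometry of the cone of simple tensors, in particular its maximal linear subspaces, which encode the two tensor factors. Write $S$ for the set of simple tensors in $X = V \otimes W$, and assume $\dim V = n$ and $\dim W = m$ are both at least $2$ (the cases $n = 1$ or $m = 1$ being trivial, since then $S = X$). The starting observation, which I would establish first, is that a sum $v_1 \otimes w_1 + v_2 \otimes w_2$ of two nonzero simple tensors is itself simple if and only if $\left<v_1 \right> = \left<v_2 \right>$ or $\left<w_1 \right> = \left<w_2 \right>$; this is a rank-at-most-one computation.

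First I would classify the subspaces $U \leqslant X$ with $U \subseteq S$. Given two nonzero elements $v_1 \otimes w_1$ and $v_2 \otimes w_2$ of such a $U$, their sum again lies in $U \subseteq S$, so by the observation above they share a left factor or a right factor. A short consistency argument then shows that if $U$ contains two elements with a common left factor but distinct right factors, then \emph{every} element of $U$ has that same left factor; consequently each such $U$ is contained in either $v \otimes W := \left\{v \otimes w : w \in W \right\}$ for some $0 \neq v \in V$, or $V \otimes w := \left\{v \otimes w : v \in V \right\}$ for some $0 \neq w \in W$. These two families are precisely the maximal members, of dimensions $m$ and $n$ respectively.

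Next, since $\theta$ is an invertible linear map preserving $S$, it permutes the maximal subspaces contained in $S$ while preserving their dimensions and the inclusion relation. If $m \neq n$ the two families are distinguished by dimension, so $\theta$ preserves each of them; if $m = n$ then $\theta$ either preserves both families or interchanges them, the latter corresponding to composing with the swap $V \otimes W \to W \otimes V$. In the non-swapping case, writing $\theta{\left(v \otimes W \right)} = \tilde{v} \otimes W$ and $\theta{\left(V \otimes w \right)} = V \otimes \tilde{w}$ defines bijections $\bar{A}$ on the points of $V$ and $\bar{B}$ on the points of $W$; because $\left<v \otimes w \right> = {\left(v \otimes W \right)} \cap {\left(V \otimes w \right)}$ for $v, w \neq 0$, the restriction of $\theta$ to the subspace $V \otimes w$ is a linear isomorphism onto $V \otimes \tilde{w}$ that induces $\bar{A}$, so $\bar{A}$ (and likewise $\bar{B}$) is in fact induced by a genuine linear map.

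Finally I would run the standard rigidity argument to assemble these into a single decomposition. For each $0 \neq w$, choose $A_w \in \GL{\left(V \right)}$ with $\theta{\left(v \otimes w \right)} = A_w{\left(v \right)} \otimes \tilde{w}$. Applying $\theta$ to $v \otimes {\left(w_1 + w_2 \right)}$ for independent $w_1, w_2$ and using that the result must be simple while $\tilde{w}_1, \tilde{w}_2$ are independent, the rank-one condition forces $A_{w_1}{\left(v \right)}$ and $A_{w_2}{\left(v \right)}$ to be parallel for all $v$, hence $A_{w_1}$ and $A_{w_2}$ proportional. After rescaling the representatives $\tilde{w}$ one obtains a single $A \in \GL{\left(V \right)}$ with $\theta{\left(v \otimes w \right)} = A{\left(v \right)} \otimes \tilde{w}$, and additivity then shows that $w \mapsto \tilde{w}$ is a linear map $B \in \GL{\left(W \right)}$. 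Thus $\theta = A \circ B$ on simple tensors, and since these span $X$, on all of $X$, so $\theta$ lies in $\GL{\left(V \right)} \circ \GL{\left(W \right)}$ (together with the swap when $m = n$) and preserves the decomposition. I expect the main obstacle to be the classification of maximal subspaces in the first step --- both the rank-one lemma and, more delicately, the consistency argument ruling out ``mixed'' subspaces whose elements variously share left or right factors.
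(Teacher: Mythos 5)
Your argument is correct, but note that the paper does not prove this lemma at all: it is imported verbatim as Lemma 4.4.5 of the cited work of Xu, Giudici, Li and Praeger, so there is no in-paper proof to compare against. What you have written is a self-contained version of the standard argument, namely the classification of the maximal linear subspaces lying on the Segre variety: the rank-one criterion for when a sum of two simple tensors is simple, the consistency argument showing every totally-simple subspace sits inside some $v \otimes W$ or $V \otimes w$, and the rigidity step producing a single pair $\left(A, B \right)$ with $\theta = A \circ B$ (up to the factor swap when $\dim V = \dim W$). All the essential steps are present and sound. Two small points deserve an explicit line if you write this up in full. First, when $\dim V = \dim W$ you assert that $\theta$ either preserves both families of maximal subspaces or interchanges them; to rule out $\theta$ mixing the families you should observe that two distinct members of the same family meet in $\left\{0 \right\}$ while members of different families meet in a $1$-dimensional space, and $\theta$ preserves intersection dimensions. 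Second, in the rigidity step the parallelism of $A_{w_1}{\left(v \right)}$ and $A_{w_2}{\left(v \right)}$ already follows from your observation that $\left<v \otimes w \right> = \left(v \otimes W \right) \cap \left(V \otimes w \right)$, so the computation with $v \otimes \left(w_1 + w_2 \right)$ is redundant there, though you do still need additivity in $w$ at the end to see that $w \mapsto \tilde{w}$ is linear. Your conclusion, membership in $\GL{\left(V \right)} \circ \GL{\left(W \right)}$ extended by the swap in the equal-dimension case, is exactly the form in which the paper uses the lemma, since the subsequent argument there appeals to the asymmetry of $\Delta$ to exclude the swap.
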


\section{The group $G{\left(m, q \right)}$}

Here we define the main family of groups discussed in this paper.

\begin{definition} \label{def:G(m, q)}
Let $F$ be the field $\GF{\left(q \right)}$ where $q$ is an odd prime power, and let $V$ and $W$ be $2$- and $m$-dimensional vector spaces over $F$, respectively, with $m \geqslant 2$. Let $\left\{e_1, e_2 \right\}$ be a basis for $V$, and let $\left\{f_1, f_2, \ldots, f_m \right\}$ be a basis for $W$. Now define $D_8 \leqslant \GL{\left(2, q \right)}$ to be the following group of linear transformations of $V$, expressed with respect to the basis $\left\{e_1, e_2 \right\}$:
\begin{align}
D_8 = {\left<{\begin{bmatrix} 1 & 0 \\ 0 & -1 \end{bmatrix}},\ \begin{bmatrix} 0 & 1 \\ 1 & 0 \end{bmatrix} \right>}. \label{eqn:D_8}
\end{align}

Finally, the group $G{\left(m, q \right)}$ is defined as follows.
\begin{displaymath}
G{\left(m, q \right)} = \left(V \otimes W \right) \rtimes \left(D_8 \circ \GL{\left(m, q \right)}\right) \leqslant \AGL{\left(2m, q \right)}.
\end{displaymath}
\end{definition}

\subsection{Suborbits} \label{sc:suborbits}

Let $V$, $W$ and $F$ be as defined in Definition \ref{def:G(m, q)}, and for convenience let $G = G {\left(m, q \right)}$. We will refer to the orbits of the point stabiliser $G_0 = D_8 \circ \GL{\left(m, q \right)}$ as the \emph{suborbits} of $G$. The following lemma will be of use in determining the suborbits.

\begin{lemma} \label{lm:V-basis-under-D8}
$\left(e_1, e_2 \right)^{D_8} = \left\{{\left(e_1,\, \pm e_2 \right)},\ {\left(-e_1,\, \pm e_2 \right)},\ {\left(e_2,\, \pm e_1 \right)},\ \left(-e_2,\, \pm e_1 \right)\right\}$.
\end{lemma}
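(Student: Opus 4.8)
The plan is to compute the orbit $(e_1, e_2)^{D_8}$ directly, using that $D_8$ has order $8$ and acts faithfully on $V$. First I would record the action of the two generators on the basis. Writing $a = \begin{bmatrix} 1 & 0 \\ 0 & -1 \end{bmatrix}$ and $b = \begin{bmatrix} 0 & 1 \\ 1 & 0 \end{bmatrix}$, one checks that $a$ fixes $e_1$ and sends $e_2 \mapsto -e_2$, while $b$ interchanges $e_1$ and $e_2$. In particular each generator, and hence every element of $D_8$, sends each basis vector to one of $\pm e_1, \pm e_2$, so $D_8$ is contained in the group of $2 \times 2$ signed permutation matrices; since that group also has order $8 = |D_8|$, the two coincide. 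This identification is the conceptual reason the orbit takes the stated form.

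Next I would enumerate the eight elements of $D_8$, for instance as $\{1,\, a,\, b,\, ab,\, ba,\, aba,\, bab,\, -1\}$, and apply each to the ordered pair $(e_1, e_2)$, using that the action on a pair is coordinatewise, $(u, v)^g = (u^g, v^g)$, and that superscripts compose left to right. Carrying this out produces exactly the eight pairs $(\pm e_1,\, \pm e_2)$ and $(\pm e_2,\, \pm e_1)$, which is precisely the set in the statement.

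Finally, to confirm that these eight pairs are genuinely distinct and that none have been omitted, I would invoke the orbit--stabiliser theorem. The only element of $D_8$ fixing both $e_1$ and $e_2$ is the identity, so the stabiliser of $(e_1, e_2)$ is trivial and the orbit has size $|D_8| = 8$. Since the enumeration already exhibits eight distinct pairs lying in the orbit, these must be all of them. There is no substantive obstacle here; the only point requiring care is to fix the action convention (matrices acting on row versus column vectors, and the order in which superscripts compose) consistently, so that the computed images agree with the asserted set. For the symmetric generators $a$ and $b$ the generator actions in fact agree under either convention, and the orbit is convention-independent in any case.
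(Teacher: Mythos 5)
Your proposal is correct and is simply a fully worked-out version of the direct computation that the paper compresses into the single line ``this follows from the definition of $D_8$'': identify $D_8$ with the eight $2 \times 2$ signed permutation matrices and apply them to $(e_1, e_2)$. The orbit--stabiliser check is a nice extra confirmation but not a different method.
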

\begin{proof}
This follows from the definition of $D_8$ in Equation \eqref{eqn:D_8}.
\end{proof}

Clearly, the set $\delta = \left\{0 \right\} \subseteq V \otimes W$ is an orbit of $G_0$ on $V \otimes W$. Let $\Delta_A$ be the orbit of $G_0$ containing the vector $e_1 \otimes f_1$. Then
\begin{displaymath}
\Delta_A = \left(e_1 \otimes f_1 \right)^{G_0} = {\left\{e_1^a \otimes f_1^g \mid a \in D_8,\ g \in \GL {\left(m, q \right)}\right\}}.
\end{displaymath}
It can be seen from Lemma~\ref{lm:V-basis-under-D8} that $e_1^{D_8} = \left\{\pm e_1, \pm e_2 \right\}$, hence we have
\begin{align*}
\Delta_A &= \left\{\pm e_1 \otimes w,\ \pm e_2 \otimes w \mid w \in W \setminus \left\{0 \right\}\right\} \\
&= \big(\left(\langle e_1 \rangle \otimes W \right) \cup \left(\langle e_2 \rangle \otimes W \right)\big) \setminus {\left\{0 \right\}}.
\end{align*}

Let $\Delta_B$ be the orbit of $G_0$ containing the vector $e_1 \otimes f_1 + e_2 \otimes f_2$. Then
\begin{displaymath}
\Delta_B = \left(e_1 \otimes f_1 + e_2 \otimes f_2 \right)^{G_0} = {\left\{e_1^a \otimes f_1^g + e_2^a \otimes f_2^g \mid a \in D_8,\ g \in \GL{\left(m, q \right)}\right\}}.
\end{displaymath}
Using Lemma~\ref{lm:V-basis-under-D8} and the fact that $\GL{\left(m, q \right)}$ acts transitively on the set of pairs of linearly independent vectors, we find that
\begin{align*}
\Delta_B &= {\left\{e_1 \otimes w_1 \pm e_2 \otimes w_2,\ -e_1 \otimes w_1 \pm e_2 \otimes w_2 \mid w_1, w_2 \in W \setminus {\left\{0 \right\}},\ \dim \left<w_1, w_2 \right> = 2 \right\}} \\
&= {\left\{e_1 \otimes w_1 + e_2 \otimes w_2 \mid w_1, w_2 \in W \setminus {\left\{0 \right\}},\ \dim \left<w_1, w_2 \right> = 2 \right\}}.
\end{align*}
That is, $\Delta_B$ is the set of all non-simple tensors in $V \otimes W$.

The remaining orbits of $G_0$ will each be denoted by $\Delta_\lambda$ for some $\lambda \in F \setminus \left\{0 \right\}$ using the following definition:
\begin{align} \label{eqn:delta_lambda}
\Delta_\lambda = {\left\{\left(e_1 \pm \lambda e_2 \right) \otimes w,\ \left(e_1 \pm \lambda^{-1} e_2 \right) \otimes w \mid w \in W \setminus \left\{0 \right\}\right\}}.
\end{align}
To see that each $\Delta_\lambda$ is a (not necessarily distinct) orbit of $G_0$, let $\lambda \in F \setminus \left\{0 \right\}$, and let $\Delta$ be the orbit containing the vector $\left(e_1 + \lambda e_2 \right) \otimes f_1$. Then
\begin{align*}
\Delta &= \left(\left(e_1 + \lambda e_2 \right) \otimes f_1 \right)^{G_0} \\
&= \left\{(e_1 + \lambda e_2)^a \otimes f_1^g \mid a \in D_8,\ g \in \GL{\left(m, q \right)}\right\} \\
&= {\left\{(e_1^a + \lambda e_2^a) \otimes f_1^g \mid a \in D_8,\ g \in \GL{\left(m, q \right)}\right\}}.
\end{align*}
Using Lemma~\ref{lm:V-basis-under-D8}, we have
\begin{align*}
\Delta &= \big\{{\left(e_1 \pm \lambda e_2 \right)} \otimes w,\ \left(-e_1 \pm \lambda e_2 \right) \otimes w, \\
&\ \ \ \ \ \ \ \left(e_2 \pm \lambda e_1 \right) \otimes w,\ \left(-e_2 \pm \lambda e_1 \right) \otimes w \mid w \in W \setminus {\left\{0 \right\}}\big\} \\
&= \big\{{\left(e_1 \pm \lambda e_2 \right)} \otimes w,\ \left(e_1 \pm \lambda^{-1} e_2 \right) \otimes w \mid w \in W \setminus {\left\{0 \right\}}\big\} \\
&= \Delta_\lambda.
\end{align*}

To summarise, the suborbits of $G$ are $\delta$, $\Delta_A$, $\Delta_B$, and $\Delta_\lambda$ for each $\lambda \in F \setminus \left\{0 \right\}$, noting that different values of $\lambda$ may correspond to the same suborbit (see the following lemma).

\begin{lemma} \label{lm:suborbit-equivalence}
If $\lambda, \mu \in F \setminus \left\{0 \right\}$, then $\Delta_\lambda = \Delta_\mu$ if and only if $\mu \in \left\{\pm \lambda,\ \pm \lambda^{-1} \right\}$.
\end{lemma}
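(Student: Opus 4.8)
The plan is to prove both directions of the biconditional by analysing the explicit description of $\Delta_\lambda$ given in Equation~\eqref{eqn:delta_lambda}. First I would establish the easy direction: if $\mu \in \{\pm\lambda, \pm\lambda^{-1}\}$, then $\Delta_\lambda = \Delta_\mu$. This is essentially immediate from the defining formula, since the set $\{(e_1 \pm \lambda e_2)\otimes w,\ (e_1 \pm \lambda^{-1}e_2)\otimes w \mid w \in W\setminus\{0\}\}$ is manifestly symmetric under replacing $\lambda$ by any of $-\lambda$, $\lambda^{-1}$, or $-\lambda^{-1}$: the four scalars $\{\pm\lambda, \pm\lambda^{-1}\}$ appearing in the definition are simply permuted among themselves. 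I would write out this observation in a line or two, noting that $W\setminus\{0\}$ is unchanged.

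For the converse, I would assume $\Delta_\lambda = \Delta_\mu$ and show $\mu \in \{\pm\lambda, \pm\lambda^{-1}\}$. The cleanest approach is to extract a canonical invariant from each suborbit. Since $\Delta_\lambda$ consists of simple tensors $v \otimes w$, each such tensor determines a well-defined projective point $\langle v\rangle$ in $\PG(1,q) = \PG(V)$; the set of $V$-points occurring in $\Delta_\lambda$ is exactly $\{\langle e_1 + \lambda e_2\rangle,\ \langle e_1 - \lambda e_2\rangle,\ \langle e_1 + \lambda^{-1}e_2\rangle,\ \langle e_1 - \lambda^{-1}e_2\rangle\}$. If $\Delta_\lambda = \Delta_\mu$, then these two four-element (or possibly smaller) point sets must coincide. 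In particular $\langle e_1 + \mu e_2\rangle$ must equal one of the four points determined by $\lambda$, which forces $\mu \in \{\pm\lambda, \pm\lambda^{-1}\}$ by comparing coordinates in the basis $\{e_1, e_2\}$.

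The step I expect to require the most care is handling the possibility that some of the four scalars coincide, so that $\Delta_\lambda$ is genuinely a smaller set and the ``four points'' degenerate. The scalars $\pm\lambda$ collide precisely when $\lambda = 0$ (excluded), and $\lambda$ coincides with $\lambda^{-1}$ exactly when $\lambda^2 = 1$, i.e.\ $\lambda = \pm 1$; similarly $\lambda = -\lambda^{-1}$ when $\lambda^2 = -1$. I would check that in each degenerate case the conclusion $\mu \in \{\pm\lambda, \pm\lambda^{-1}\}$ still holds, since any coincidence among the $\lambda$-scalars is mirrored by the corresponding coincidence among the $\mu$-scalars once the point sets are forced to agree. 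One subtlety to verify is that the projective points really are distinct from one another generically and that the correspondence between a scalar $\nu$ and the point $\langle e_1 + \nu e_2\rangle$ is injective on $F$, which is clear since $e_1, e_2$ are linearly independent.

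An alternative I would keep in reserve, in case the point-set bookkeeping becomes awkward, is to use the cross-ratio machinery of Lemma~\ref{lm:permute-cross-ratio}: the four $V$-points of $\Delta_\lambda$ together with the reference points $P_{(0)} = \langle e_1\rangle$ and $P_{(\infty)} = \langle e_2\rangle$ form a configuration whose cross-ratios are governed by $\{\pm\lambda, \pm\lambda^{-1}\}$, and the six-element orbit $\{r, 1/r, 1-r, 1/(1-r), r/(r-1), (r-1)/r\}$ from Table~\ref{tbl:permuted-cross-ratios} is the natural home for this symmetry. However, for this lemma the direct coordinate comparison should suffice and is more transparent, so I would present that as the main argument.
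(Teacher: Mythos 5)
Your proposal is correct and follows essentially the same route as the paper, which simply observes that the claim is immediate from the defining formula in Equation~\eqref{eqn:delta_lambda}: the set $\Delta_\lambda$ is determined by the scalar set $\left\{\pm\lambda, \pm\lambda^{-1}\right\}$, and your unpacking of both directions (including the uniqueness of the $V$-factor of a nonzero simple tensor up to scalar) is a valid elaboration of that one-line argument.
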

\begin{proof}
This follows immediately from the definition given in Equation \eqref{eqn:delta_lambda} above.
\end{proof}

\subsection{Orbital digraphs}

Denote the non-trivial orbitals of $G{\left(m, q \right)}$ by $B_A$, $B_B$ and $B_\lambda$ such that they correspond to the suborbits $\Delta_A$, $\Delta_B$ and $\Delta_\lambda$ respectively for each $\lambda \in F \setminus \left\{0 \right\}$. Denote by $\Gamma_A$, $\Gamma_B$ and $\Gamma_\lambda$ the corresponding orbital digraphs. Since $G{\left(m, q \right)}$ contains the regular normal subgroup $V \otimes W$, the orbital digraph corresponding to any suborbit $\Delta$ is the Cayley digraph $\Cay{\left(V \otimes W,\ \Delta \right)}$.

To check whether $G{\left(m, q \right)}$ is the automorphism group of any digraph, it suffices to check only the unions of the non-trivial orbital digraphs. Some of these orbital digraph unions are isomorphic to the Hamming graph $H{\left(2, q^m \right)}$; to see this, we first note \cite[Lemma 3.1]{ref:giudici-primitive} which is as follows.

\begin{lemma} \label{lm:direct-sum-hamming}
Suppose $X$ and $Y$ are $m$-dimensional subspaces of $V \otimes W$ such that $V \otimes W = X \oplus Y$, and let $S = \left(X \cup Y \right) \setminus \left\{0 \right\}$. Then the Cayley digraph $\Cay{\left(V \otimes W,\ S \right)}$ is isomorphic to the Hamming graph $H{\left(2, q^m \right)}$.
\end{lemma}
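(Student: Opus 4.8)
The plan is to exhibit an explicit graph isomorphism using the unique decomposition afforded by the direct sum. Since $V \otimes W = X \oplus Y$, every vector $z \in V \otimes W$ can be written uniquely as $z = x + y$ with $x \in X$ and $y \in Y$, and this determines a bijection $\phi \colon V \otimes W \to X \times Y$ given by $\phi(z) = (x, y)$. Because $X$ and $Y$ are each $m$-dimensional over $\GF(q)$, both have exactly $q^m$ elements, so after fixing bijections $X \to \left\{1, \ldots, q^m \right\}$ and $Y \to \left\{1, \ldots, q^m \right\}$ we may regard $\phi$ as a bijection from $V \otimes W$ onto the vertex set of $H{\left(2, q^m \right)}$.

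First I would check that $\phi$ translates the adjacency condition of the Cayley digraph into that of the Hamming graph. Two vertices $z_1 = x_1 + y_1$ and $z_2 = x_2 + y_2$ are adjacent in $\Cay{\left(V \otimes W,\ S \right)}$ precisely when $z_1 - z_2 \in S = \left(X \cup Y \right) \setminus \left\{0 \right\}$. By linearity $z_1 - z_2 = \left(x_1 - x_2 \right) + \left(y_1 - y_2 \right)$ with $x_1 - x_2 \in X$ and $y_1 - y_2 \in Y$, and this is exactly the decomposition of $z_1 - z_2$ with respect to $X \oplus Y$.

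The crux of the argument is the observation that $X \cap Y = \left\{0 \right\}$ forces one coordinate to agree. Indeed, $z_1 - z_2$ lies in $X$ if and only if its $Y$-component $y_1 - y_2$ vanishes, i.e.\ $y_1 = y_2$; symmetrically $z_1 - z_2 \in Y$ if and only if $x_1 = x_2$. Hence $z_1 - z_2 \in \left(X \cup Y \right) \setminus \left\{0 \right\}$ if and only if exactly one of the equalities $x_1 = x_2$, $y_1 = y_2$ holds, which is precisely the condition that $\phi{\left(z_1 \right)}$ and $\phi{\left(z_2 \right)}$ differ in exactly one coordinate. Thus $\phi$ is the required isomorphism.

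I do not expect a serious obstacle here, since the statement is purely one of linear algebra about the direct-sum decomposition. The only point needing care is that the connection set is $X \cup Y$ rather than the full sum $X + Y$, so the role of $X \cap Y = \left\{0 \right\}$ in ruling out the case where both coordinates differ must be made explicit. I would also note in passing that $S = -S$, as $X$ and $Y$ are subspaces and hence closed under negation, so the Cayley digraph is in fact an undirected graph and the comparison with the (undirected) Hamming graph is legitimate.
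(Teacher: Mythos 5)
Your proof is correct and complete: the unique decomposition $z = x + y$ with respect to $X \oplus Y$ gives exactly the right bijection, and your key observation that $z_1 - z_2 \in X$ forces $y_1 = y_2$ (and symmetrically) is precisely what identifies the adjacency condition with ``differ in exactly one coordinate.'' Note that the paper does not prove this lemma at all --- it is quoted verbatim as Lemma 3.1 of Giudici, Morgan and Zhou --- so there is no in-paper argument to compare against, but yours is the standard and expected one, and your closing remarks (that $S = -S$ makes the Cayley digraph a genuine graph, and that $X \cap Y = \{0\}$ is what rules out both coordinates differing) address exactly the points that need care.
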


Using this result, we can prove the following two lemmas.

\begin{lemma} \label{lm:Gamma_A,B}
The group $G = G{\left(m, q \right)}$ is not the automorphism group of its orbital digraph $\Gamma_A$, nor of $\Gamma_B$.
\end{lemma}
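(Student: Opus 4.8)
The plan is to show that in each case $G$ is a \emph{proper} subgroup of the automorphism group of the orbital digraph. Since $\Gamma_A$ and $\Gamma_B$ are Cayley digraphs on $V \otimes W$, the group $G$—and indeed the regular translation group $V \otimes W$—automatically lies in the relevant automorphism group, so it suffices to produce, in each case, an automorphism not belonging to $G$. As $G$ and each automorphism group are transitive on $V \otimes W$, by orbit--stabiliser it is equivalent to show that the stabiliser $G_0 = D_8 \circ \GL(m,q)$ is properly contained in the stabiliser of $0$ in the full automorphism group. I would treat the two digraphs separately, exploiting their distinct geometric descriptions.

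For $\Gamma_A$, recall that $\Delta_A = (X \cup Y) \setminus \{0\}$ where $X = \langle e_1 \rangle \otimes W$ and $Y = \langle e_2 \rangle \otimes W$ are complementary $m$-dimensional subspaces with $V \otimes W = X \oplus Y$. Lemma~\ref{lm:direct-sum-hamming} then gives $\Gamma_A \cong H(2, q^m)$, whose automorphism group is $S_{q^m} \wr S_2$. I would exhibit an explicit automorphism that is not affine: choosing a permutation $\pi$ of the set $X$ that fixes $0$ but is not linear (such $\pi$ exist because the number $(q^m - 1)!$ of $0$-fixing permutations of $X$ exceeds $|\GL(m,q)|$), the map $x + y \mapsto \pi(x) + y$ permutes one Hamming coordinate and fixes the other, hence lies in $\Aut(\Gamma_A)$. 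Its restriction to the subspace $X$ is $\pi$, which is non-affine, so the map itself is non-affine; since $G \leqslant \AGL(2m, q)$ consists of affine maps, this automorphism is not in $G$, and so $G$ is a proper subgroup of $\Aut(\Gamma_A)$.

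For $\Gamma_B$, the suborbit $\Delta_B$ is exactly the set of non-simple (rank $2$) tensors. Every element $b \circ g$ of the larger central product $\GL(2,q) \circ \GL(m,q)$ preserves tensor rank, hence fixes $\Delta_B$ setwise, so by Lemma~\ref{lm:endomorphism-automorphism} it is an automorphism of $\Gamma_B$ fixing $0$. Thus $\GL(2,q) \circ \GL(m,q) \leqslant \Aut(\Gamma_B)_0$, and this contains $G_0 = D_8 \circ \GL(m,q)$. The crux—and the step I expect to be the main obstacle—is to verify that this containment is \emph{proper}. Here I would use uniqueness of tensor factorisations (cf.\ Lemma~\ref{lm:preserves-simple-tensors-preserves-decomp}): if $b \circ I_m = a \circ g$ with $a \in D_8$ and $g \in \GL(m,q)$, then comparing images of simple tensors forces $g$ to be scalar and $b$ to be a scalar multiple of $a$, so $b \circ I_m$ lies in $G_0$ precisely when $b$ is a scalar multiple of an element of $D_8$; these $b$ form a subgroup of $\GL(2,q)$ of order $4(q-1)$. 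Since $|\GL(2,q)| = q(q-1)^2(q+1) > 4(q-1)$ for every odd prime power $q$, there exists $b \in \GL(2,q)$ that is not such a scalar multiple, and then $b \circ I_m$ is an automorphism of $\Gamma_B$ not lying in $G$. Hence $G_0$ is properly contained in $\Aut(\Gamma_B)_0$, so $G$ is a proper subgroup of $\Aut(\Gamma_B)$, completing the proof.
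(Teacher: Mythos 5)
Your proposal is correct and follows essentially the same route as the paper: the same decomposition $X = \langle e_1 \rangle \otimes W$, $Y = \langle e_2 \rangle \otimes W$ with Lemma~\ref{lm:direct-sum-hamming} identifying $\Gamma_A$ with $H(2,q^m)$, and the same observation that $\Delta_B$ is the set of non-simple tensors, hence $\GL(2,q) \circ \GL(m,q) \leqslant \Aut(\Gamma_B)_0$ via Lemma~\ref{lm:endomorphism-automorphism}. The only difference is that you make explicit two points the paper leaves implicit --- an explicit non-affine automorphism of the Hamming graph, and the order count showing $D_8 \circ \GL(m,q)$ is proper in $\GL(2,q) \circ \GL(m,q)$ --- both of which are correct.
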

\begin{proof}
If we let $X = \langle e_1 \rangle \otimes W$ and $Y = \langle e_2 \rangle \otimes W$, then $X$ and $Y$ are $m$-dimensional subspaces of $V \otimes W$ that intersect only at $0$, so $V \otimes W = X \oplus Y$. Since $\Gamma_A = \Cay{\left(V \otimes W,\ \Delta_A \right)}$ and $\Delta_A = X \cup Y \setminus \{0\}$, it follows from Lemma~\ref{lm:direct-sum-hamming} that $\Gamma_A$ is isomorphic to the Hamming graph $H{\left(2, q^m \right)}$. Since this has automorphism group $S_{q^m} \wr S_2$, it is clear that $\Aut{\left(\Gamma_A \right)}$ is much larger than $G$.

Recall that $\Delta_B$ is the set of all non-simple tensors in $V \otimes W$, and is therefore invariant under $\GL{\left(2, q \right)} \circ \GL{\left(m, q \right)}$. Hence $G_0 < \GL{\left(2, q \right)} \circ \GL{\left(m, q \right)} \leqslant \Aut{\left(\Gamma_B \right)}_0$ by Lemma~\ref{lm:endomorphism-automorphism}, so $G$ is not the full automorphism group of $\Gamma_B$.
\end{proof}

\begin{lemma} \label{lm:Gamma_1,i}
The group $G{\left(m, q \right)}$ is not the automorphism group of its orbital digraph $\Gamma_1$, nor of $\Gamma_i$ for any $i \in \GF{\left(q \right)}$ such that $i^2 = -1$.
\end{lemma}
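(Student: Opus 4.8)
The plan is to show that each of $\Gamma_1$ and $\Gamma_i$ is isomorphic to the Hamming graph $H(2, q^m)$, after which the result follows exactly as for $\Gamma_A$ in Lemma~\ref{lm:Gamma_A,B}: the automorphism group $S_{q^m} \wr S_2$ of that Hamming graph is far larger than $G$, so $G$ cannot be its full automorphism group. The whole proof therefore reduces to exhibiting, in each case, a direct sum decomposition $V \otimes W = X \oplus Y$ into $m$-dimensional subspaces with the corresponding suborbit equal to $(X \cup Y) \setminus \{0\}$, and then applying Lemma~\ref{lm:direct-sum-hamming}.

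The key observation driving both cases is that $\lambda = 1$ and $\lambda = i$ are precisely the values for which $\lambda^{-1} = \pm \lambda$, which makes the four ``directions'' $e_1 \pm \lambda e_2$ and $e_1 \pm \lambda^{-1} e_2$ in the definition of $\Delta_\lambda$ in Equation~\eqref{eqn:delta_lambda} collapse to only two. For $\lambda = 1$ we have $\lambda^{-1} = 1$, so $\Delta_1 = \{(e_1 \pm e_2) \otimes w \mid w \in W \setminus \{0\}\}$, and I would take $X = \langle e_1 + e_2 \rangle \otimes W$ and $Y = \langle e_1 - e_2 \rangle \otimes W$. For $\lambda = i$ we have $i^{-1} = -i$, so $\{\pm i, \pm i^{-1}\} = \{i, -i\}$ and $\Delta_i = \{(e_1 \pm i e_2) \otimes w \mid w \in W \setminus \{0\}\}$; here I would take $X = \langle e_1 + i e_2 \rangle \otimes W$ and $Y = \langle e_1 - i e_2 \rangle \otimes W$.

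In each case I would then verify the three conditions needed for Lemma~\ref{lm:direct-sum-hamming}. First, that the suborbit equals $(X \cup Y) \setminus \{0\}$, which is immediate from the descriptions above since $\langle e_1 \pm \lambda e_2 \rangle$ is one-dimensional. Second, that $X$ and $Y$ are $m$-dimensional. Third, that $X \cap Y = \{0\}$, which follows because in both cases the two spanning vectors of $V$ are linearly independent: for $\lambda = 1$ this uses that $q$ is odd, so $2 \neq 0$ and $e_1 + e_2, e_1 - e_2$ are independent, and for $\lambda = i$ the matrix with rows $(1, i)$ and $(1, -i)$ has determinant $-2i \neq 0$. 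A short computation in the basis $\{e_1, e_2\}$ then shows $X \cap Y = \{0\}$, so $V \otimes W = X \oplus Y$ by a dimension count, and Lemma~\ref{lm:direct-sum-hamming} gives the required isomorphism with $H(2, q^m)$.

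I do not expect a serious obstacle here; the whole argument is a direct application of Lemma~\ref{lm:direct-sum-hamming} and the Hamming-graph reasoning of Lemma~\ref{lm:Gamma_A,B}, once one spots that $\lambda \in \{1, \pm i\}$ are exactly the suborbit parameters for which $\Delta_\lambda$ degenerates into a union of two complementary $m$-subspaces. The only points needing care are the hypotheses underlying the direct sum decomposition, namely that $q$ is odd (for $\lambda = 1$) and that an element $i$ with $i^2 = -1$ actually exists (for $\lambda = i$); the latter makes the claim about $\Gamma_i$ vacuous unless $q \equiv 1 \pmod 4$.
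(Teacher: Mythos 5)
Your proposal is correct and follows essentially the same route as the paper: the paper also takes $X = \left<e_1 + e_2\right> \otimes W$, $Y = \left<e_1 - e_2\right> \otimes W$ (respectively $X = \left<e_1 + i e_2\right> \otimes W$, $Y = \left<e_1 - i e_2\right> \otimes W$), applies Lemma~\ref{lm:direct-sum-hamming} to identify each orbital digraph with $H{\left(2, q^m\right)}$, and concludes since $S_{q^m} \wr S_2$ is much larger than $G$. Your extra remarks on why the decomposition is direct and on the existence of $i$ are fine but not needed beyond what the paper states.
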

\begin{proof}
Let $X = \langle e_1 + e_2 \rangle \otimes W$, and let $Y = \langle e_1 - e_2 \rangle \otimes W$. Then $X$ and $Y$ are $m$-dimensional subspaces of $V \otimes W$ that intersect only at $0$, so $V \otimes W = X \oplus Y$.

Recalling the definition of $\Delta_1$, we see that
\begin{displaymath}
\Delta_1 = \left\{ \left(e_1 \pm e_2 \right) \otimes w \mid w \in W \setminus \left\{0 \right\} \right\} = X \cup Y \setminus {\left\{0 \right\}},
\end{displaymath}
so by Lemma~\ref{lm:direct-sum-hamming} we have $\Gamma_1 = \Cay{\left(V \otimes W,\ \Delta_1 \right)} \cong H{\left(2, q^m \right)}$.

Now if $i^2 = -1$, then $i^{-1} = -i$. It follows that
\begin{displaymath}
\Delta_i = {\left\{\left(e_1 \pm i e_2 \right) \otimes w \mid w \in W \setminus \left\{0 \right\}\right\}}.
\end{displaymath}
Using the same argument as above, but with $X = \left<e_1 + i e_2 \right> \otimes W$ and $Y = \left<e_1 - i e_2 \right> \otimes W$, we see that $\Gamma_i$ is also isomorphic to $H{\left(2, q^m \right)}$. The automorphism group of $H{\left(2, q^m \right)}$ is $S_{q^m} \wr S_2$, which is much larger than $G$, so we conclude that $G$ is not the automorphism group of $\Gamma_1$ nor~$\Gamma_i$.
\end{proof}

\begin{proposition} \label{prop:primitive}
The group $G{\left(m, q \right)}$ acts primitively on $V \otimes W$.
\end{proposition}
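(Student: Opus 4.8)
The plan is to invoke the standard criterion for primitivity of an affine permutation group. Writing $G = (V \otimes W) \rtimes G_0$ with regular normal subgroup $T = (V \otimes W, +)$ acting by translation and $G_0 = D_8 \circ \GL(m,q)$, the blocks of $G$ containing the point $0$ are exactly the subgroups of $T$ that are invariant under $G_0$. Thus $G$ is primitive if and only if the only $G_0$-invariant additive subgroups of $V \otimes W$ are $\{0\}$ and $V \otimes W$. First I would observe that $G_0$ contains every scalar map $\lambda \cdot \mathrm{id}_{V \otimes W}$ with $\lambda \in F \setminus \{0\}$, since $1_{D_8} \circ (\lambda I_m)$ acts as $v \otimes w \mapsto v \otimes \lambda w = \lambda (v \otimes w)$. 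Consequently any $G_0$-invariant additive subgroup is closed under multiplication by $F$, hence is an $F$-subspace, and the problem reduces to showing that $G_0$ acts irreducibly on $V \otimes W$ as an $F$-module.

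To prove irreducibility I would restrict the action to the subgroup $K = \{1_{D_8}\} \circ \GL(m,q) \leqslant G_0$, which acts trivially on the first tensor factor. With respect to the decomposition $V \otimes W = (\langle e_1\rangle \otimes W) \oplus (\langle e_2\rangle \otimes W)$, the module $V \otimes W$ restricted to $K$ is isomorphic to $W \oplus W$, two copies of the natural $\GL(m,q)$-module. Since $\GL(m,q)$ spans the full matrix algebra over $F$, its centraliser there consists only of scalars, so $\mathrm{End}_K(W) = F$ and $W$ is absolutely irreducible. The submodules of the isotypic module $W \oplus W$ are therefore $\{0\}$, the whole space, and the $m$-dimensional ``diagonal'' submodules, which are precisely the subspaces $\langle u \rangle \otimes W$ as $\langle u \rangle$ ranges over the $1$-spaces of $V$.

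Finally I would bring in the $D_8$-factor. Any $G_0$-invariant subspace $U$ is in particular $K$-invariant, so by the previous paragraph either $U \in \{\{0\}, V \otimes W\}$, in which case we are done, or $U = \langle u\rangle \otimes W$ for some $1$-space $\langle u\rangle \leqslant V$. In the latter case $U$ must also be invariant under $a \circ I_m$ for every $a \in D_8$, and since $a \circ I_m$ maps $\langle u\rangle \otimes W$ to $\langle u^a\rangle \otimes W$, this forces $\langle u\rangle$ to be a $D_8$-invariant $1$-space of $V$. But $D_8$ has no invariant $1$-space: the diagonal generator of Equation~\eqref{eqn:D_8} fixes only $\langle e_1\rangle$ and $\langle e_2\rangle$, which are interchanged by the swap generator, while a general $1$-space $\langle e_1 + t e_2\rangle$ is sent by the diagonal generator to $\langle e_1 - t e_2\rangle$, coinciding with it only when $t = 0$ because $q$ is odd. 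Hence no proper nonzero $G_0$-invariant subspace exists, $G_0$ acts irreducibly, and $G$ is primitive.

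As for where the difficulty lies, the two reductions (affine primitivity to irreducibility, and closure under $F$-scalars) are routine, and the real content is the submodule analysis of $W \oplus W$ combined with the $D_8$-irreducibility. The main obstacle I anticipate is justifying cleanly that the $K$-submodules are exactly the $\langle u\rangle \otimes W$; this rests on the natural module being absolutely irreducible, which I would establish via the centraliser argument above rather than by quoting a general tensor-irreducibility theorem.
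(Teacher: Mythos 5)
Your proof is correct, but it takes a genuinely different route from the paper's. The paper invokes Higman's criterion (\cite[1.12]{ref:higman-intersection}): a transitive group is primitive if and only if all of its nontrivial orbital digraphs are connected. It then runs through the suborbits computed in Section 3.1, showing that $\Gamma_A$ and each $\Gamma_\lambda$ contain spanning subgraphs isomorphic to the Hamming graph $H{\left(2, q^m \right)}$ via Lemma~\ref{lm:direct-sum-hamming}, and that $\Delta_B$ generates $V \otimes W$ by writing an arbitrary simple tensor as a sum of two non-simple ones. You instead use the affine criterion --- blocks through $0$ are the $G_0$-invariant subgroups of the regular normal subgroup --- note that the scalars in $\GL{\left(m, q \right)}$ force any such subgroup to be an $F$-subspace, and then classify the $G_0$-invariant subspaces by restricting to $1 \circ \GL{\left(m, q \right)}$ (so that $V \otimes W$ becomes the isotypic module $W \oplus W$ with $\mathrm{End} = F$, whose proper nonzero submodules are exactly the $\left<u \right> \otimes W$) and observing that $D_8$ fixes no $1$-space of $V$. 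Your approach is independent of the suborbit enumeration and yields strictly more information (a complete list of the $G_0$-invariant subspaces), at the cost of the representation-theoretic input on isotypic modules; the paper's approach is more elementary and recycles Lemma~\ref{lm:direct-sum-hamming}, which it needs elsewhere anyway, but it does require having the suborbits in hand first. The only step worth spelling out a little more in your write-up is the standard fact that the $F$-span of $\GL{\left(m, q \right)}$ is all of $M_m{\left(F \right)}$ (clear for $m \geqslant 2$ from the elementary matrices $I + E_{ij}$ and closure of the span under products), which is what makes the centraliser argument for $\mathrm{End}_K{\left(W \right)} = F$ go through.
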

\begin{proof}
By \cite[1.12]{ref:higman-intersection}, it suffices to show that all the nontrivial orbital digraphs are connected. We have seen in the proof of Lemma~\ref{lm:Gamma_A,B} that $\Gamma_A$ is isomorphic to the Hamming graph $H{\left(2, q^m \right)}$ and is therefore connected. Recall that $\Gamma_B$ is the set of tensors in $V \otimes W$ that are not simple, and observe that any nonzero simple tensor $v \otimes w \in V \otimes W$ is the sum of two simple tensors via
\begin{displaymath}
v \otimes w = \left(\frac{1}{2} v \otimes w + v' \otimes w' \right) + \left(\frac{1}{2} v \otimes w - v' \otimes w' \right)
\end{displaymath}
for some $v' \in V$ and $w' \in W$ such that $\dim \left<v, v' \right> = \dim \left<w, w' \right> = 2$. Hence $\Delta_B$ is a generating set for the group $V \otimes W$, so $\Gamma_B = \Cay{\left(V \otimes W,\ \Delta_B \right)}$ is connected.

Now consider the orbital digraph $\Gamma_\lambda = \Cay{\left(V \otimes W,\ \Delta_\lambda \right)}$ where $\lambda \in \GF{\left(q \right)} \setminus \left\{0 \right\}$. Let $X = \left<e_1 + \lambda e_2 \right> \otimes W$, let $Y = \left<e_1 - \lambda e_2 \right> \otimes W$, and let $\delta_\lambda = \left(X \cup Y \right) \setminus \left\{0 \right\}$. Observe then that $\delta_\lambda$ is a subset of $\Delta_\lambda$, so it suffices to show that $\Cay{\left(V \otimes W,\ \delta_\lambda \right)}$ is connected since it is a subgraph of $\Gamma_\lambda$. Since $X$ and $Y$ are $m$-dimensional subspaces of $V \otimes W$ that intersect only at $0$, we have that $V \otimes W = X \oplus Y$. Hence by Lemma~\ref{lm:direct-sum-hamming}, $\Cay{\left(V \otimes W,\ \delta_\lambda \right)}$ is isomorphic to the Hamming graph $H{\left(2, q^m \right)}$, and is therefore connected as required.
\end{proof}

\subsection{Automorphisms of the orbital digraphs}
In this section we take $z \in \left\{4, 6 \right\}$ and $I = \left\{1, 2, \ldots, z \right\}$. For each $i \in I$ we define $\mu_i \in \GF{\left(q \right)}$, such that each $\mu_i$ is distinct.

\begin{definition} \label{def:projection-coordinates}
For each odd $i \in I$, consider the following direct sum decomposition of $V \otimes W$:
\begin{displaymath}
    V \otimes W = \left(\left<e_1 + \mu_i e_2 \right> \otimes W \right) \oplus {\left(\left<e_1 + \mu_{i+1} e_2 \right> \otimes W \right)}.
\end{displaymath}
Then for each odd $i \in I$ and each $x \in V \otimes W$, define $\pi_i{\left(x \right)}$ and $\pi_{i+1}{\left(x \right)}$ to be the unique vectors in $W$ such that
\begin{displaymath}
    x = \Big({\left(e_1 + \mu_i e_2 \right)} \otimes \pi_i{\left(x \right)} \Big) + \Big({\left(e_1 + \mu_{i+1} e_2 \right)} \otimes \pi_{i+1}{\left(x \right)}\Big).
\end{displaymath}
\end{definition}

\begin{lemma} \label{lm:projections-preserve-addition-and-multiplication}
Let $x, y \in V \otimes W$, let $\kappa \in \GF{\left(q \right)}$ and let $i \in I$. Then $\pi_i{\left(x + y \right)} = \pi_i{\left(x \right)} + \pi_i{\left(y \right)}$ and $\pi_i{\left(\kappa x \right)} = \kappa \pi_i{\left(x \right)}$.
\end{lemma}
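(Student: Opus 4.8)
The plan is to prove that each $\pi_i$ is $\GF(q)$-linear by reducing everything to the bilinearity of the tensor product together with the uniqueness built into Definition \ref{def:projection-coordinates}. First I would confirm that the decomposition underlying that definition is genuine: since the $\mu_i$ are distinct, the vectors $e_1 + \mu_i e_2$ and $e_1 + \mu_{i+1} e_2$ are linearly independent and hence form a basis of $V$, so that $V = \langle e_1 + \mu_i e_2 \rangle \oplus \langle e_1 + \mu_{i+1} e_2 \rangle$, and tensoring with $W$ yields the direct sum decomposition of $V \otimes W$ used in the definition. In particular, for each $x$ the pair $\big(\pi_i(x), \pi_{i+1}(x)\big)$ of $W$-coordinates is uniquely determined by $x$, and $\pi_i$ is precisely the $W$-coordinate of the projection onto one of the two summands.

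For the additive property I would write out the defining equations for $x$ and for $y$ and add them, then collect terms on each summand using the tensor identity $u \otimes a + u \otimes b = u \otimes (a+b)$, obtaining
\begin{displaymath}
x + y = \big( (e_1 + \mu_i e_2) \otimes (\pi_i(x) + \pi_i(y)) \big) + \big( (e_1 + \mu_{i+1} e_2) \otimes (\pi_{i+1}(x) + \pi_{i+1}(y)) \big).
\end{displaymath}
By the uniqueness asserted in Definition \ref{def:projection-coordinates}, the two $W$-coordinates appearing here must coincide with $\pi_i(x+y)$ and $\pi_{i+1}(x+y)$, giving $\pi_i(x+y) = \pi_i(x) + \pi_i(y)$. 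The scalar case is entirely analogous: multiplying the defining equation for $x$ by $\kappa$ and pulling the scalar through via $\kappa(u \otimes a) = u \otimes (\kappa a)$ exhibits $\kappa x$ in exactly the form prescribed by Definition \ref{def:projection-coordinates}, so uniqueness forces $\pi_i(\kappa x) = \kappa \pi_i(x)$.

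I expect no serious obstacle here: the statement is essentially that projection onto a summand of a direct sum is linear, combined with the bilinearity of $\otimes$. The only bookkeeping point is that the lemma ranges over all $i \in I$, whereas Definition \ref{def:projection-coordinates} introduces the projections in pairs $(\pi_i, \pi_{i+1})$ indexed by odd $i$. This causes no difficulty, since matching \emph{both} $W$-coordinates in the displayed identity above simultaneously establishes the claim for $\pi_i$ and for $\pi_{i+1}$; as $i$ runs over the odd elements of $I$ and $i+1$ over the even ones, every index in $I$ is thereby covered, so the odd and even cases need not be treated separately.
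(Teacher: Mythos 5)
Your proposal is correct and follows essentially the same route as the paper's proof: expand $x$ and $y$ via the defining decomposition, combine terms using bilinearity of the tensor product, and invoke the uniqueness of the coordinates from Definition \ref{def:projection-coordinates}. The extra remarks you include (that the distinctness of the $\mu_i$ makes the decomposition genuine, and that treating each odd--even pair covers all of $I$) are sound and only make explicit what the paper leaves implicit.
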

\begin{proof}
We have
\begin{align*}
    x + y &= \left(e_1 + \mu_1 e_2 \right) \otimes \pi_1{\left(x \right)} + \left(e_1 + \mu_2 e_2 \right) \otimes \pi_2{\left(x \right)} + \left(e_1 + \mu_1 e_2 \right) \otimes \pi_1{\left(y \right)} + \left(e_1 + \mu_2 e_2 \right) \otimes \pi_2{\left(y \right)} \\
    &= \left(e_1 + \mu_1 e_2 \right) \otimes \left(\pi_1{\left(x \right)} + \pi_1{\left(y \right)}\right) + \left(e_1 + \mu_2 e_2 \right) \otimes \left(\pi_2{\left(x \right)} + \pi_2{\left(y \right)}\right)
\end{align*}
and
\begin{align*}
    \kappa x &= \kappa \left(\left(e_1 + \mu_1 e_2 \right) \otimes \pi_1{\left(x \right)} + \left(e_1 + \mu_2 e_2 \right) \otimes \pi_2{\left(x \right)}\right) \\
    &= \left(e_1 + \mu_1 e_2 \right) \otimes \kappa \pi_1{\left(x \right)} + \left(e_1 + \mu_2 e_2 \right) \otimes \kappa \pi_2{\left(x \right)},
\end{align*}
so we see the lemma holds for $i \in \left\{1, 2 \right\}$. The proof is similar for $i \in \left\{3, 4, 5, 6 \right\}$.
\end{proof}

\begin{lemma} \label{lm:projection-relations}
Let $i, j, k \in I$ such that $i \neq j$. Then there exist $\kappa_1, \kappa_2 \in \GF{\left(q \right)}$ such that
\begin{displaymath}
    \pi_k{\left(x \right)} = \kappa_1 \pi_i{\left(x \right)} + \kappa_2 \pi_j{\left(x \right)}
\end{displaymath}
holds for all $x \in V \otimes W$. Moreover, if $i, j, k$ are all distinct then $\kappa_1 \neq 0$ and $\kappa_2 \neq 0$.
\end{lemma}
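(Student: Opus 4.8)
The plan is to recognise that every projection $\pi_k$ is induced by a single linear functional on the $2$-dimensional space $V$, and then to reduce the whole statement to an elementary fact about functionals on a $2$-dimensional space. First I would fix ``standard'' coordinates by writing an arbitrary $x \in V \otimes W$ uniquely as $x = e_1 \otimes a + e_2 \otimes b$ with $a, b \in W$. Substituting $\pi_i(x)$ and $\pi_{i+1}(x)$ into the defining relation of Definition \ref{def:projection-coordinates} and comparing the coefficients of $e_1$ and $e_2$ yields the system $a = \pi_i(x) + \pi_{i+1}(x)$ and $b = \mu_i \pi_i(x) + \mu_{i+1}\pi_{i+1}(x)$; solving it (using $\mu_i \neq \mu_{i+1}$) expresses each of $\pi_i(x), \pi_{i+1}(x)$ as a fixed $\GF(q)$-linear combination of $a$ and $b$ whose coefficients depend only on the $\mu$'s. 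Hence for every $k \in I$ there is a functional $\phi_k \in V^{*}$, with $\phi_k(e_1), \phi_k(e_2)$ these coefficients, such that $\pi_k(x) = \phi_k(e_1)\,a + \phi_k(e_2)\,b$ for all $x$; that each $\pi_k$ is additive and homogeneous, so that this makes sense, is already Lemma \ref{lm:projections-preserve-addition-and-multiplication}.

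The assignment $\psi \mapsto \big(x \mapsto \psi(e_1)a + \psi(e_2)b\big)$ is an isomorphism from $V^{*}$ onto the space of such maps, so the identity $\pi_k = \kappa_1 \pi_i + \kappa_2 \pi_j$ is equivalent to $\phi_k = \kappa_1 \phi_i + \kappa_2 \phi_j$ in $V^{*}$. Since $V^{*}$ is $2$-dimensional, it therefore suffices to prove the key claim that $\phi_i$ and $\phi_j$ are linearly independent whenever $i \neq j$: then $\left\{\phi_i, \phi_j \right\}$ is a basis of $V^{*}$ and the required $\kappa_1, \kappa_2$ exist (and are unique). Granting this claim, the ``moreover'' part is immediate: if $i, j, k$ are distinct and $\kappa_1 = 0$, then $\phi_k = \kappa_2 \phi_j$, and since $\phi_k \neq 0$ this makes $\phi_k, \phi_j$ linearly dependent, forcing $k = j$ by the claim applied to the pair $k, j$ — a contradiction; the same reasoning rules out $\kappa_2 = 0$.

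It remains to prove the key claim, which I would do by computing kernels. Applying the defining relation to $x = \left(e_1 + \mu_{i+1} e_2 \right) \otimes w$ gives $\pi_i(x) = 0$, whence $\phi_i{\left(e_1 + \mu_{i+1} e_2 \right)} = 0$ and, as $\phi_i \neq 0$, $\ker \phi_i = \left<e_1 + \mu_{i+1} e_2 \right>$; symmetrically $\ker \phi_{i+1} = \left<e_1 + \mu_i e_2 \right>$. Writing $\sigma$ for the involution of $I$ swapping each odd $i$ with $i+1$, this reads $\ker \phi_k = \left<e_1 + \mu_{\sigma(k)} e_2 \right>$ for all $k \in I$. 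Because $\sigma$ is a bijection of $I$ and the $\mu_i$ are distinct, these lines are pairwise distinct as $k$ ranges over $I$; since two nonzero functionals on a $2$-dimensional space are linearly dependent exactly when they share a kernel, distinct indices give linearly independent functionals, which is the claim. The computations are all routine, so I do not expect a serious obstacle; the one point demanding care is the index bookkeeping — it is the \emph{partner} direction $\left<e_1 + \mu_{\sigma(k)} e_2 \right>$, not the index's own line, that $\pi_k$ annihilates, and getting this backwards would scramble the independence argument.
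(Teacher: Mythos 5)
Your proof is correct, but it takes a genuinely different route from the paper's. The paper equates the two (for $z=4$; three, for $z=6$) direct-sum decompositions of an arbitrary $x$, reads off the resulting linear relations among all the $\pi_k{\left(x\right)}$ (its Equations \eqref{eqn:projection-relation-unsolved-1}--\eqref{eqn:projection-relation-unsolved-4}), and then solves these explicitly for $\pi_k$ in terms of $\pi_i$ and $\pi_j$, exhibiting coefficients such as $\frac{\mu_4 - \mu_2}{\mu_1 - \mu_4}$ whose nonvanishing is visible from the distinctness of the $\mu_i$; the two values of $z$ are treated as separate cases, with the $z=6$ case only sketched. You instead identify each $\pi_k$ with a functional $\phi_k \in V^{*}$ via $\pi_k{\left(x\right)} = \phi_k{\left(e_1\right)} a + \phi_k{\left(e_2\right)} b$, compute $\ker \phi_k = \left<e_1 + \mu_{\sigma{\left(k\right)}} e_2\right>$, and conclude pairwise linear independence of the $\phi_k$ from the distinctness of these lines, after which the lemma is just the statement that any two of them form a basis of the $2$-dimensional space $V^{*}$. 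Your argument is uniform in $z$ (indeed it works for any number of distinct $\mu_i$), avoids the case split and the ``similarly for $z=6$'' step, and yields uniqueness of $\kappa_1, \kappa_2$ as a bonus; what it gives up is only the explicit coefficient formulas, which the paper does not actually use downstream (later lemmas invoke only existence and nonvanishing). Your closing caution about the index bookkeeping is well placed: it is indeed the partner line $\left<e_1 + \mu_{i+1} e_2\right>$ that $\pi_i$ annihilates, and your kernel computation gets this right.
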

\begin{proof}
This is clearly true if $k=i$ or $k=j$, so assume that $i, j, k$ are all distinct.

First, suppose $z=4$. It follows from Definition \ref{def:projection-coordinates} that
\begin{align*}
    &e_1 \otimes \left(\pi_1{\left(x \right)} + \pi_2{\left(x \right)}\right) + e_2 \otimes \left(\mu_1 \pi_1{\left(x \right)} + \mu_2 \pi_2{\left(x \right)}\right) \\
    =\ &e_1 \otimes \left(\pi_3{\left(x \right)} + \pi_4{\left(x \right)}\right) + e_2 \otimes \left(\mu_3 \pi_3{\left(x \right)} + \mu_4 \pi_4{\left(x \right)}\right)
\end{align*}
for all $x \in V \otimes W$, and hence
\begin{displaymath}
    e_1 \otimes \left(\pi_1{\left(x \right)} + \pi_2{\left(x \right)} - \pi_3{\left(x \right)} - \pi_4{\left(x \right)}\right) + e_2 \otimes \left(\mu_1 \pi_1{\left(x \right)} + \mu_2 \pi_2{\left(x \right)} - \mu_3 \pi_3{\left(x \right)} + \mu_4 \pi_4{\left(x \right)}\right) = 0
\end{displaymath}
for all $x \in V \otimes W$.
This gives us
\begin{align}
    \pi_1{\left(x \right)} + \pi_2{\left(x \right)} - \pi_3{\left(x \right)} - \pi_4{\left(x \right)} &= 0, \label{eqn:projection-relation-unsolved-1} \\
    \mu_1 \pi_1{\left(x \right)} + \mu_2 \pi_2{\left(x \right)} - \mu_3 \pi_3{\left(x \right)} - \mu_4 \pi_4{\left(x \right)} &= 0 \label{eqn:projection-relation-unsolved-2}
\end{align}
for all $x \in V \otimes W$, from which we can solve for $\pi_k{\left(x \right)}$ in terms of $\pi_i{\left(x \right)}$ and $\pi_j{\left(x \right)}$. For example, if $k=1$ we have
\begin{align*}
    \pi_1{\left(x \right)} &= \frac{\mu_4 - \mu_2}{\mu_1 - \mu_4} \pi_2{\left(x \right)} + \frac{\mu_3 - \mu_4}{\mu_1 - \mu_4} \pi_3{\left(x \right)}, \\
    \pi_1{\left(x \right)} &= \frac{\mu_3 - \mu_2}{\mu_1 - \mu_3} \pi_2{\left(x \right)} + \frac{\mu_4 - \mu_3}{\mu_1 - \mu_3} \pi_4{\left(x \right)}, \\
    \pi_1{\left(x \right)} &= \frac{\mu_3 - \mu_2}{\mu_1 - \mu_2} \pi_3{\left(x \right)} + \frac{\mu_4 - \mu_2}{\mu_1 - \mu_2} \pi_4{\left(x \right)}
\end{align*}
for all $x \in V \otimes W$. Since $\mu_1$, $\mu_2$, $\mu_3$, $\mu_4$ are distinct, the above coefficients are defined and non-zero. It is simple to verify that this also holds for $k \in \left\{2, 3, 4 \right\}$.

If $z=6$, then we have the following in addition to Equations \eqref{eqn:projection-relation-unsolved-1} and \eqref{eqn:projection-relation-unsolved-2}:
\begin{align}
    \pi_1{\left(x \right)} + \pi_2{\left(x \right)} - \pi_5{\left(x \right)} - \pi_6{\left(x \right)} &= 0, \label{eqn:projection-relation-unsolved-3} \\
    \mu_1 \pi_1{\left(x \right)} + \mu_2 \pi_2{\left(x \right)} - \mu_5 \pi_5{\left(x \right)} - \mu_6 \pi_6{\left(x \right)} &= 0 \label{eqn:projection-relation-unsolved-4}
\end{align}
for all $x \in V \otimes W$. We can then use Equations \eqref{eqn:projection-relation-unsolved-1}, \eqref{eqn:projection-relation-unsolved-2}, \eqref{eqn:projection-relation-unsolved-3} and \eqref{eqn:projection-relation-unsolved-4} to solve for $\pi_k{\left(x \right)}$ in terms of $\pi_i{\left(x \right)}$ and $\pi_j{\left(x \right)}$ similarly to the case where $z=4$.
\end{proof}

\begin{lemma} \label{lm:two-projections-determines-tensor}
Given any $i, j \in I$ with $i \neq j$, and any $w, w' \in W$, there exists a unique $x \in V \otimes W$ such that $\pi_i{\left(x \right)} = w$ and $\pi_j{\left(x \right)} = w'$.
\end{lemma}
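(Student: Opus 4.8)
The plan is to recognise the assignment $x \mapsto \left(\pi_i{\left(x \right)}, \pi_j{\left(x \right)}\right)$ as a linear map between two vector spaces of the same dimension, and then reduce both halves of the statement to a single injectivity check. Concretely, I would define $\Phi \colon V \otimes W \to W \times W$ by $\Phi{\left(x \right)} = \left(\pi_i{\left(x \right)}, \pi_j{\left(x \right)}\right)$. By Lemma~\ref{lm:projections-preserve-addition-and-multiplication} each coordinate of $\Phi$ is $F$-linear, so $\Phi$ is an $F$-linear map. Since $\dim{\left(V \otimes W \right)} = 2m = \dim{\left(W \times W \right)}$, the map $\Phi$ is a bijection as soon as it is injective; and the existence and uniqueness assertions of the lemma are precisely the surjectivity and injectivity of $\Phi$.

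The only real content, then, is to verify that $\ker \Phi = \left\{0 \right\}$. I would suppose $\pi_i{\left(x \right)} = \pi_j{\left(x \right)} = 0$. Because $i \neq j$, Lemma~\ref{lm:projection-relations} expresses every $\pi_k{\left(x \right)}$ with $k \in I$ as an $F$-linear combination of $\pi_i{\left(x \right)}$ and $\pi_j{\left(x \right)}$, so $\pi_k{\left(x \right)} = 0$ for all $k \in I$. In particular $\pi_1{\left(x \right)} = \pi_2{\left(x \right)} = 0$, and since $\left\{1, 2 \right\}$ is one of the matched pairs of Definition~\ref{def:projection-coordinates}, the defining identity
\begin{displaymath}
x = \left(e_1 + \mu_1 e_2 \right) \otimes \pi_1{\left(x \right)} + \left(e_1 + \mu_2 e_2 \right) \otimes \pi_2{\left(x \right)}
\end{displaymath}
forces $x = 0$. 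Hence $\Phi$ is injective.

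With injectivity in hand I would close the argument directly: for uniqueness, if $x$ and $x'$ both satisfy $\pi_i{\left(\cdot \right)} = w$ and $\pi_j{\left(\cdot \right)} = w'$, then linearity gives $\Phi{\left(x - x' \right)} = 0$, whence $x = x'$; for existence, an injective $F$-linear map between finite-dimensional spaces of equal dimension is onto, so some $x$ satisfies $\Phi{\left(x \right)} = \left(w, w' \right)$. I do not expect a genuine obstacle, as the statement is essentially rank--nullity bookkeeping. The one point deserving care is that the injectivity step uses Lemma~\ref{lm:projection-relations} only to annihilate \emph{all} the projections; it is then the definitional decomposition attached to a genuine matched pair (here $\left\{1, 2 \right\}$) that actually recovers $x$ itself, so singling out such a pair is what makes the reduction legitimate.
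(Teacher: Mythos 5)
Your proof is correct, but it takes a genuinely different route from the paper's. The paper proves existence constructively: it uses Lemma~\ref{lm:projection-relations} to write $\pi_1$ and $\pi_2$ as explicit linear combinations of $\pi_i$ and $\pi_j$ with coefficients $\kappa_1, \kappa_1', \kappa_2, \kappa_2'$, defines the candidate $x = \left(e_1 + \mu_1 e_2\right) \otimes \left(\kappa_1 w + \kappa_1' w'\right) + \left(e_1 + \mu_2 e_2\right) \otimes \left(\kappa_2 w + \kappa_2' w'\right)$, and then solves the resulting $2 \times 2$ system to verify $\pi_i(x) = w$ and $\pi_j(x) = w'$; this forces it to argue separately (by a somewhat indirect contradiction) that the determinant $\kappa_1 \kappa_2' - \kappa_1' \kappa_2$ is nonzero. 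Your argument replaces all of that with rank--nullity: you only need Lemma~\ref{lm:projection-relations} to show that the kernel of $\Phi$ is trivial (all projections vanish, hence $x = 0$ via the defining decomposition for the matched pair $\{1,2\}$), and surjectivity comes for free from the dimension count $\dim(V \otimes W) = 2m = \dim(W \times W)$. What the paper's approach buys is an explicit formula for $x$ in terms of $w$ and $w'$; what yours buys is brevity and the elimination of the determinant check, at the cost of being non-constructive. Both proofs lean on the same two ingredients (Lemmas~\ref{lm:projections-preserve-addition-and-multiplication} and~\ref{lm:projection-relations}), and your use of them is sound.
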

\begin{proof}
From Lemma~\ref{lm:projection-relations} there exist some $\kappa_1, \kappa_1', \kappa_2, \kappa_2' \in \GF{\left(q \right)}$ such that
\begin{align}
    \pi_1{\left(x \right)} &= \kappa_1 \pi_i{\left(x \right)} + \kappa_1' \pi_j{\left(x \right)}, \label{eqn:pi_1-relation} \\
    \pi_2{\left(x \right)} &= \kappa_2 \pi_i{\left(x \right)} + \kappa_2' \pi_j{\left(x \right)} \label{eqn:pi_2-relation}
\end{align}
for all $x \in V \otimes W$. Therefore letting
\begin{displaymath}
    x = \left(e_1 + \mu_1 e_2 \right) \otimes \left(\kappa_1 w + \kappa_1' w' \right) + \left(e_1 + \mu_2 e_2 \right) \otimes {\left(\kappa_2 w + \kappa_2' w' \right)},
\end{displaymath}
we know that Equations \eqref{eqn:pi_1-relation} and \eqref{eqn:pi_2-relation} must hold. Note that if $\kappa_1 \kappa_2' = \kappa_1' \kappa_2$, then solving these equations simultaneously for $\pi_i{\left(x \right)}$ and $\pi_j{\left(x \right)}$ would not be possible, contradicting Lemma~\ref{lm:projection-relations}. Solving the equations simultaneously therefore yields
\begin{align*}
    \pi_i{\left(x \right)} &= \frac{\kappa_2' \pi_1{\left(x \right)} - \kappa_1' \pi_2{\left(x \right)}}{\kappa_1 \kappa_2' - \kappa_1' \kappa_2} \\
    &= \frac{\kappa_2' \left(\kappa_1 w + \kappa_1' w' \right) - \kappa_1' \left(\kappa_2 w + \kappa_2' w' \right)}{\kappa_1 \kappa_2' - \kappa_1' \kappa_2} \\
    &= w, \\
    \pi_j{\left(x \right)} &= \frac{\kappa_2 \pi_1{\left(x \right)} - \kappa_1 \pi_2{\left(x \right)}}{\kappa_1' \kappa_2 - \kappa_1 \kappa_2'} \\
    &= \frac{\kappa_2 \left(\kappa_1 w + \kappa_1' w' \right) - \kappa_1 \left(\kappa_2 w + \kappa_2' w' \right)}{\kappa_1' \kappa_2 - \kappa_1 \kappa_2'} \\
    &= w'.
\end{align*}

To see that $x$ is the only vector that satisfies these criteria, suppose that $\pi_i{\left(y \right)} = w$ and $\pi_j{\left(y \right)} = w'$ for some $y \in V \otimes W$. Since Equations \eqref{eqn:pi_1-relation} and \eqref{eqn:pi_2-relation} hold if each occurrence of $x$ is replaced with $y$, we have $\pi_1{\left(y \right)} = \kappa_1 w + \kappa_1' w'$ and $\pi_2{\left(y \right)} = \kappa_2 w + \kappa_2' w'$. Hence
\begin{displaymath}
    y = \left(e_1 + \mu_1 e_2 \right) \otimes \left(\kappa_1 w + \kappa_1' w' \right) + \left(e_1 + \mu_2 e_2 \right) \otimes \left(\kappa_2 w + \kappa_2' w' \right) = x.\qedhere
\end{displaymath}
\end{proof}

\begin{definition} \label{def:cliques}
For each $i \in I$ and $x \in V \otimes W$, we define the set
\begin{displaymath}
    \ell_i{\left(x \right)} = {\left\{\alpha \in V \otimes W \mid \pi_i{\left(\alpha \right)} = \pi_i{\left(x \right)}\right\}}.
\end{displaymath}
\end{definition}

\begin{lemma} \label{lm:clique-equivalence}
If $y \in \ell_i{\left(x \right)}$, then $\ell_i{\left(y \right)} = \ell_i{\left(x \right)}$.
\end{lemma}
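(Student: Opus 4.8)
The plan is to recognise the sets $\ell_i(x)$ as exactly the fibres of the projection $\pi_i$, so that membership in $\ell_i(x)$ depends only on the value $\pi_i(x)$. Once this is made explicit, the result is just the statement that two fibres through a common point must coincide, which follows by unwinding Definition~\ref{def:cliques}.

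First I would record that the hypothesis $y \in \ell_i(x)$ means, directly from Definition~\ref{def:cliques}, that $\pi_i(y) = \pi_i(x)$. This single equality is the entire content of the hypothesis, and everything reduces to it.

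Next I would prove $\ell_i(y) = \ell_i(x)$ by a chain of biconditionals (equivalently, a double inclusion). For any $\alpha \in V \otimes W$, we have $\alpha \in \ell_i(y)$ if and only if $\pi_i(\alpha) = \pi_i(y)$; since $\pi_i(y) = \pi_i(x)$, this holds if and only if $\pi_i(\alpha) = \pi_i(x)$, which in turn holds if and only if $\alpha \in \ell_i(x)$. Hence the two sets have precisely the same elements and are therefore equal.

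There is no genuine obstacle here: the statement merely expresses the fact that the relation ``$\pi_i(u) = \pi_i(v)$'' is an equivalence relation on $V \otimes W$ whose classes are the sets $\ell_i(\cdot)$. The only point requiring care is to apply the defining condition in both directions, so that membership is characterised purely through the shared value of $\pi_i$; no property of $\pi_i$ beyond its being a well-defined function (Definition~\ref{def:projection-coordinates}) is needed.
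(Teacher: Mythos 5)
Your proof is correct and is simply the explicit version of the paper's one-line argument, which also derives the equality directly from Definition~\ref{def:cliques} by noting that membership in $\ell_i(\cdot)$ depends only on the common value of $\pi_i$. Nothing further is needed.
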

\begin{proof}
This follows trivially from Definition \ref{def:cliques}.
\end{proof}

\begin{lemma} \label{lm:clique-intersection}
Let $x, y \in V \otimes W$ and let $i, j \in I$ such that $i \neq j$. Then $\ell_i{\left(x \right)}$ and $\ell_j{\left(y \right)}$ intersect at exactly one vector.
\end{lemma}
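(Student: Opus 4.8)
The plan is to recognise that this lemma is essentially a reformulation of Lemma~\ref{lm:two-projections-determines-tensor} in the language of the sets $\ell_i$. First I would unwind the definitions: by Definition~\ref{def:cliques}, a vector $\alpha \in V \otimes W$ lies in $\ell_i{\left(x \right)}$ precisely when $\pi_i{\left(\alpha \right)} = \pi_i{\left(x \right)}$, and it lies in $\ell_j{\left(y \right)}$ precisely when $\pi_j{\left(\alpha \right)} = \pi_j{\left(y \right)}$. Hence
\begin{displaymath}
    \ell_i{\left(x \right)} \cap \ell_j{\left(y \right)} = {\left\{\alpha \in V \otimes W \mid \pi_i{\left(\alpha \right)} = \pi_i{\left(x \right)} \text{ and } \pi_j{\left(\alpha \right)} = \pi_j{\left(y \right)}\right\}}.
\end{displaymath}
So the whole question reduces to counting the vectors whose $i$-th projection equals the fixed element $w := \pi_i{\left(x \right)} \in W$ and whose $j$-th projection equals the fixed element $w' := \pi_j{\left(y \right)} \in W$.

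Next I would apply Lemma~\ref{lm:two-projections-determines-tensor} directly. Since we are given $i \neq j$, and $w, w'$ are specified elements of $W$, that lemma guarantees there is a unique $\alpha \in V \otimes W$ with $\pi_i{\left(\alpha \right)} = w$ and $\pi_j{\left(\alpha \right)} = w'$. Substituting back the values of $w$ and $w'$, this $\alpha$ is exactly the unique member of the intersection described above, which establishes both that the intersection is nonempty and that it contains exactly one vector.

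There is no real obstacle here: the content of the statement is carried entirely by Lemma~\ref{lm:two-projections-determines-tensor}, and the only thing to be careful about is translating the set-membership conditions of Definition~\ref{def:cliques} faithfully into the simultaneous-projection form, together with noting that the hypothesis $i \neq j$ is precisely the hypothesis needed to invoke that earlier lemma. I would keep the argument to a couple of sentences and resist re-deriving the existence-and-uniqueness machinery, since it has already been done in Lemma~\ref{lm:two-projections-determines-tensor}.
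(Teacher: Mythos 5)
Your proposal is correct and follows exactly the same route as the paper: unwind Definition~\ref{def:cliques} to express the intersection as the set of $\alpha$ with $\pi_i{\left(\alpha \right)} = \pi_i{\left(x \right)}$ and $\pi_j{\left(\alpha \right)} = \pi_j{\left(y \right)}$, then invoke Lemma~\ref{lm:two-projections-determines-tensor} for existence and uniqueness. Nothing is missing.
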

\begin{proof}
For any $\alpha \in V \otimes W$, we know that $\alpha \in \ell_i{\left(x \right)} \cap \ell_j{\left(y \right)}$ if and only if $\pi_i{\left(\alpha \right)} = \pi_i{\left(x \right)}$ and $\pi_j{\left(\alpha \right)} = \pi_j{\left(x \right)}$. This is true for precisely one vector in $V \otimes W$ according to Lemma~\ref{lm:two-projections-determines-tensor}.
\end{proof}

\begin{lemma} \label{lm:parallel-cliques}
Let $x, y \in V \otimes W$ and let $i \in I$. If $\ell_i{\left(x \right)} \cap \ell_i{\left(y \right)}$ is nonempty, then $y \in \ell_i{\left(x \right)}$.
\end{lemma}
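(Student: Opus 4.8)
The plan is to extract a common element from the nonempty intersection and use it to deduce that $x$ and $y$ have the same $i$-th projection; the defining condition of $\ell_i(x)$ then places $y$ inside it. No machinery beyond Definition \ref{def:cliques} should be needed, since the sets $\ell_i(x)$ are precisely the fibres of the projection map $\pi_i$, and the lemma is the statement that two such fibres are either disjoint or equal.

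First I would use the hypothesis to choose some $\alpha \in \ell_i(x) \cap \ell_i(y)$. By Definition \ref{def:cliques}, the containment $\alpha \in \ell_i(x)$ gives $\pi_i(\alpha) = \pi_i(x)$, while $\alpha \in \ell_i(y)$ gives $\pi_i(\alpha) = \pi_i(y)$. Chaining these two equalities through the common value $\pi_i(\alpha)$ yields $\pi_i(x) = \pi_i(y)$.

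Next I would observe that this is exactly the membership criterion for $\ell_i(x)$: since $\pi_i(y) = \pi_i(x)$, Definition \ref{def:cliques} immediately gives $y \in \ell_i(x)$, which is the desired conclusion. (One could equivalently phrase the final step via Lemma \ref{lm:clique-equivalence}, noting that $\alpha \in \ell_i(x)$ forces $\ell_i(\alpha) = \ell_i(x)$ and likewise $\ell_i(\alpha) = \ell_i(y)$, whence $\ell_i(x) = \ell_i(y)$ and in particular $y \in \ell_i(x)$.)

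I do not anticipate any genuine obstacle: the result is a direct unwinding of the definition of $\ell_i$, and its only content is that the fibres of $\pi_i$ partition $V \otimes W$. The single point worth stating cleanly is the transitivity step $\pi_i(x) = \pi_i(\alpha) = \pi_i(y)$, after which the conclusion is immediate.
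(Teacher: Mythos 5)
Your proposal is correct and is essentially identical to the paper's proof: both take $\alpha$ in the intersection, chain $\pi_i(x) = \pi_i(\alpha) = \pi_i(y)$, and conclude $y \in \ell_i(x)$ directly from Definition \ref{def:cliques}. No issues.
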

\begin{proof}
Let $\alpha \in \ell_i{\left(x \right)} \cap \ell_i{\left(y \right)}$. Then $\pi_i{\left(\alpha \right)} = \pi_i{\left(x \right)} = \pi_i{\left(y \right)}$, so $y \in \ell_i{\left(x \right)}$ as required.
\end{proof}

\begin{definition} \label{def:hamming-union}
Let $\Delta$ be the subset of $V \otimes W$ defined by
\begin{displaymath}
    \Delta = {\left\{\left(e_1 + \mu_i e_2 \right) \otimes w \mid i \in I,\ w \in W \setminus \left\{0 \right\}\right\}},
\end{displaymath}
and let $\Gamma$ denote the Cayley digraph $\Cay{\left(V \otimes W,\ \Delta \right)}$.
\end{definition}

\begin{lemma} \label{lm:cliques}
Each $\ell_i{\left(x \right)}$ is a clique of size $q^m$ of the Cayley digraph $\Gamma$.
\end{lemma}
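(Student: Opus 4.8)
The plan is to prove both assertions at once by describing the elements of $\ell_i(x)$ explicitly in terms of the coordinate decomposition of Definition \ref{def:projection-coordinates}. First I would fix $i \in I$ and let $j$ be its partner index in the pairing of that definition, so $j = i+1$ when $i$ is odd and $j = i-1$ when $i$ is even; in either case $\{i,j\}$ is one of the pairs $\{1,2\}, \{3,4\}, \{5,6\}$, and
\[
V \otimes W = \big(\langle e_1 + \mu_i e_2 \rangle \otimes W\big) \oplus \big(\langle e_1 + \mu_j e_2 \rangle \otimes W\big),
\]
with $\pi_i, \pi_j$ the associated coordinate maps. Thus every $\alpha \in V \otimes W$ is written uniquely as $\alpha = (e_1 + \mu_i e_2) \otimes \pi_i(\alpha) + (e_1 + \mu_j e_2) \otimes \pi_j(\alpha)$, and the defining condition $\pi_i(\alpha) = \pi_i(x)$ of membership in $\ell_i(x)$ fixes the first summand while leaving $\pi_j(\alpha)$ free. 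Hence
\[
\ell_i(x) = \big\{(e_1 + \mu_i e_2) \otimes \pi_i(x) + (e_1 + \mu_j e_2) \otimes w \mid w \in W \big\}.
\]

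For the size I would note that the restriction of $\pi_j$ to $\ell_i(x)$ is a bijection onto $W$: by Lemma~\ref{lm:two-projections-determines-tensor}, for each target $w \in W$ there is a unique $\alpha$ with $\pi_i(\alpha) = \pi_i(x)$ and $\pi_j(\alpha) = w$, which gives both surjectivity and injectivity. Therefore $|\ell_i(x)| = |W| = q^m$.

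For the clique property I would take two distinct elements $\alpha, \beta \in \ell_i(x)$, say with $\pi_j(\alpha) = w$ and $\pi_j(\beta) = w'$ where $w \neq w'$. From the explicit form above their difference is $\alpha - \beta = (e_1 + \mu_j e_2) \otimes (w - w')$ with $w - w' \neq 0$, which lies in $\Delta$ since $j \in I$; hence $(\alpha, \beta)$ is an arc of $\Gamma = \Cay(V \otimes W, \Delta)$. Because negating the $W$-component keeps a vector of $\Delta$ inside $\Delta$, we have $\Delta = -\Delta$, so the reverse pair $(\beta, \alpha)$ is also an arc. Thus any two distinct elements of $\ell_i(x)$ are mutually adjacent and $\ell_i(x)$ is a clique.

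I expect no real obstacle here; the only points needing care are the bookkeeping of the index pairing and the appeal to Lemma~\ref{lm:two-projections-determines-tensor} to see that $\pi_j$ sweeps out all of $W$ as $\alpha$ ranges over $\ell_i(x)$, which is exactly what pins down $\ell_i(x)$ and simultaneously forces every pairwise difference into the connection set $\Delta$.
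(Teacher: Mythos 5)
Your proof is correct and follows essentially the same route as the paper: describe $\ell_i(x)$ explicitly via the coordinate decomposition to get $|\ell_i(x)| = |W| = q^m$, then observe that the difference of two distinct members is $(e_1 + \mu_j e_2)\otimes(w - w') \in \Delta$. The only cosmetic differences are that the paper works out the case $i=1$ and declares the rest similar, whereas you handle the general partner index $j$ uniformly, and you add the (correct, harmless) remark that $\Delta = -\Delta$.
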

\begin{proof}
We will demonstrate this for $i = 1$; the proof is similar if $i \in \left\{2, 3, 4, 5, 6 \right\}$.

If $x \in V \otimes W$, then
\begin{align*}
    \ell_1{\left(x \right)} &= \left\{y \in V \otimes W \mid \pi_1{\left(y \right)} = \pi_1{\left(x \right)}\right\} \\
    &= {\left\{\left(e_1 + \mu_1 e_2 \right) \otimes \pi_1{\left(x \right)} + \left(e_1 + \mu_2 e_2 \right) \otimes w \mid w \in W \right\}},
\end{align*}
so $\left|\ell_1{\left(x \right)} \right| = \left|W \right| = q^m$.

Now let $y$ and $y'$ be distinct vectors in $\ell_1{\left(x \right)}$. Then $\pi_1{\left(y \right)} = \pi_1{\left(y' \right)} = \pi_1{\left(x \right)}$, but $\pi_2{\left(y \right)} \neq \pi_2{\left(y' \right)}$ since $y \neq y'$. Therefore
\begin{align*}
    y-y' &= \left(\left(e_1 + \mu_1 e_2 \right) \otimes \pi_1{\left(y \right)} + \left(e_1 + \mu_2 e_2 \right) \otimes \pi_2{\left(y \right)}\right) - \left(\left(e_1 + \mu_1 e_2 \right) \otimes \pi_1{\left(y' \right)} + \left(e_1 + \mu_2 e_2 \right) \otimes \pi_2{\left(y' \right)}\right) \\
    &= \left(e_1 + \mu_2 e_2 \right) \otimes \left(\pi_2{\left(y \right)} - \pi_2{\left(y' \right)}\right) \\
    &\in \Delta
\end{align*}
so $x$ and $y$ are adjacent in $\Gamma$.
\end{proof}

\begin{lemma} \label{lm:adjacency-implies-clique}
Let $x, y \in V \otimes W$ be adjacent in $\Gamma$. Then $\pi_i{\left(x \right)} = \pi_i{\left(y \right)}$ for some $i \in I$.
\end{lemma}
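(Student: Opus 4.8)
The plan is to exploit the fact that $\Gamma = \Cay(V \otimes W, \Delta)$ is a Cayley digraph on the additive group $V \otimes W$, so that $x$ and $y$ being adjacent means precisely that their difference lies in the connection set $\Delta$. By Definition~\ref{def:hamming-union}, this difference is therefore a simple tensor of the form $(e_1 + \mu_k e_2) \otimes w$ for some $k \in I$ and some nonzero $w \in W$. Since each $\pi_i$ is additive by Lemma~\ref{lm:projections-preserve-addition-and-multiplication}, we have $\pi_i(x) - \pi_i(y) = \pi_i(x - y)$, so the whole claim reduces to exhibiting a single index $i \in I$ for which $\pi_i\big((e_1 + \mu_k e_2) \otimes w\big) = 0$.

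The key observation is that such an $i$ can be read directly off $k$. Recall from Definition~\ref{def:projection-coordinates} that the projections are defined pairwise: for each odd $i$ the decomposition used is $V \otimes W = (\langle e_1 + \mu_i e_2\rangle \otimes W) \oplus (\langle e_1 + \mu_{i+1} e_2\rangle \otimes W)$, and these pairs $\{1,2\}, \{3,4\}, \{5,6\}$ partition $I$. Thus $k$ belongs to exactly one such pair, and I would take $i$ to be the \emph{partner} of $k$ in that pair (that is, $i = k+1$ if $k$ is odd, and $i = k-1$ if $k$ is even). With this choice the simple tensor $(e_1 + \mu_k e_2) \otimes w$ already lies entirely inside the summand $\langle e_1 + \mu_k e_2 \rangle \otimes W$, so when it is written in the direct sum decomposition defining the pair, its component in the partner summand is $0$. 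Reading off the defining equation of Definition~\ref{def:projection-coordinates} then gives $\pi_i\big((e_1 + \mu_k e_2) \otimes w\big) = 0$, whence $\pi_i(x) = \pi_i(y)$, as required.

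There is essentially no computation to carry out; the argument is a direct unwinding of the definitions together with linearity, so I do not anticipate a genuine obstacle. The only points that require a little care are bookkeeping ones. First, the adjacency relation should be read in the correct direction, but this is harmless because $\Delta$ is symmetric (it is closed under negation, as $w$ ranges over all of $W \setminus \{0\}$), so $x - y \in \Delta$ if and only if $y - x \in \Delta$ and the same index $i$ works either way. Second, one should note that the pair containing $k$ is unique, so the partner index $i$ is well defined. The statement is effectively the converse direction to Lemma~\ref{lm:cliques}: there, equality of a single projection forced adjacency, whereas here adjacency forces equality of a single projection.
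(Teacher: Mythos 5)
Your proposal is correct and matches the paper's own argument: both identify the difference of adjacent vertices as an element $\left(e_1 + \mu_k e_2\right) \otimes w$ of $\Delta$ and observe that the partner projection in the pair containing $k$ vanishes on it, hence agrees on $x$ and $y$. Routing the reduction through the additivity of $\pi_i$ (Lemma~\ref{lm:projections-preserve-addition-and-multiplication}) rather than expanding the decomposition directly, and your remark on the symmetry of $\Delta$, are only cosmetic differences.
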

\begin{proof}
We have $y-x \in \Delta$, so $y-x = \left(e_1 + \mu_j e_2 \right) \otimes w$ for some $j \in I$ and $w \in W \setminus \left\{0 \right\}$. If $j=1$, then
\begin{align*}
    \left(e_1 + \mu_1 e_2 \right) \otimes w &= y-x \\
    &= \left(\left(e_1 + \mu_1 e_2 \right) \otimes \pi_1{\left(y \right)} + \left(e_1 + \mu_2 e_2 \right) \otimes \pi_2{\left(y \right)}\right) \\
    &\ \ \ \ \ \ \ - \left(\left(e_1 + \mu_1 e_2 \right) \otimes \pi_1{\left(x \right)} + \left(e_1 + \mu_2 e_2 \right) \otimes \pi_2{\left(x \right)}\right) \\
    &= \left(e_1 + \mu_1 e_2 \right) \otimes \left(\pi_1{\left(y \right)} - \pi_1{\left(x \right)}\right) + \left(e_1 + \mu_2 e_2 \right) \otimes \left(\pi_2{\left(y \right)} - \pi_2{\left(x \right)}\right)
\end{align*}
and therefore $\pi_2{\left(y \right)} - \pi_2{\left(x \right)} = 0$ as required. The proof is similar if $j \in \left\{2, 3, 4, 5, 6 \right\}$.
\end{proof}

\begin{lemma} \label{lm:no-other-cliques}
If $q > z$ and $\ell$ is a clique of size $q^m$ of $\Gamma$, then $\ell = \ell_i{\left(x \right)}$ for some $i \in I$ and some $x \in V \otimes W$.
\end{lemma}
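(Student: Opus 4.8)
The plan is to reduce to the case $0 \in \ell$ and then force $\ell$ to coincide with one of the subspaces $\langle e_1 + \mu_i e_2\rangle \otimes W$. Since $\Gamma = \Cay(V \otimes W,\, \Delta)$ is a Cayley graph, translation by any vector of $V \otimes W$ is an automorphism, and by Lemma~\ref{lm:projections-preserve-addition-and-multiplication} it carries each clique $\ell_i(x)$ to $\ell_i(x + t)$; hence it suffices to prove the statement when $0 \in \ell$ and then translate back by a base point of the original clique. Set $U_i = \langle e_1 + \mu_i e_2\rangle \otimes W$ for $i \in I$. These are $z$ subspaces of dimension $m$ that pairwise meet only in $0$, and $\Delta = \bigcup_{i \in I}(U_i \setminus \{0\})$. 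Since every nonzero vector of $\ell$ is adjacent to $0$, we have $\ell \subseteq \{0\} \cup \Delta = \bigcup_{i} U_i$, so the sets $C_i := \ell \cap U_i$ cover $\ell$ and their nonzero parts partition $\ell \setminus \{0\}$.

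If exactly one $C_i$ contains a nonzero vector, then $\ell \subseteq U_i$, and since $|U_i| = q^m = |\ell|$ we obtain $\ell = U_i$, which is one of the cliques $\ell_j(0)$ (for the index $j$ paired with $i$ in Definition~\ref{def:projection-coordinates}); translating back gives $\ell = \ell_j(x)$, as required. The crux is therefore to rule out two indices $p \neq p'$ for which $C_p$ and $C_{p'}$ both contain nonzero vectors. Here I would argue by a direct calculation: writing such vectors as $(e_1 + \mu_p e_2) \otimes a$ and $(e_1 + \mu_{p'} e_2) \otimes a'$ with $a, a' \neq 0$, the requirement that their difference lie in some $U_k$ forces $k \notin \{p, p'\}$ and shows $a'$ to be a nonzero scalar multiple of $a$. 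Applying this to two distinct nonzero vectors of $C_p$ against a fixed nonzero vector of $C_{p'}$ shows that all nonzero $W$-components occurring in $C_p$ are proportional; running the argument over every active index collapses all of them to a single line $\langle a_0 \rangle \subseteq W$, so that $\ell \subseteq V \otimes \langle a_0 \rangle$.

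This confines $\ell$ to a set of size $q^2$. When $m \geq 3$ this contradicts $|\ell| = q^m$ immediately. When $m = 2$ we instead have $|V \otimes \langle a_0 \rangle| = q^2 = |\ell|$, forcing $\ell = V \otimes \langle a_0 \rangle$; this is exactly where the hypothesis $q > z$ is needed, since $q + 1 > z$ guarantees a direction $\langle e_1 + \nu e_2 \rangle$ of $V$ with $\nu \notin \{\mu_1, \ldots, \mu_z\}$, whence $V \otimes \langle a_0 \rangle$ contains a nonadjacent pair and is not a clique, a contradiction. I expect the main obstacle to be the middle step: iterating the pairwise proportionality relations into a single common direction $a_0$, and then handling the $m = 2$ endgame cleanly. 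Everything else is bookkeeping with the direct-sum decompositions of Definition~\ref{def:projection-coordinates} and the clique lemmas already established. (An essentially equivalent version of the key calculation can be phrased entirely in terms of the projections $\pi_i$ via Lemma~\ref{lm:projection-relations}, should that fit the surrounding notation better.)
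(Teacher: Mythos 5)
Your proposal is correct, but it takes a genuinely different route from the paper. The paper's proof never opens up the vectors: it picks $z^2+1 \leqslant q^2 \leqslant q^m$ points of $\ell$, uses Lemma~\ref{lm:adjacency-implies-clique} and pigeonhole to find $z$ of them inside a single $\ell_i(x)$, and then shows that any $u \in \ell \setminus \ell_i(x)$ would force two of those $z$ points into a common $\ell_j(u)$ with $j \neq i$, contradicting Lemma~\ref{lm:clique-intersection}; this is a purely incidence-geometric ``net'' argument that relies only on the intersection properties in Lemmas~\ref{lm:clique-intersection} and~\ref{lm:parallel-cliques}. You instead translate so that $0 \in \ell$ (legitimate, since translations carry $\ell_i(x)$ to $\ell_i(x+t)$ by Lemma~\ref{lm:projections-preserve-addition-and-multiplication}), decompose $\Delta$ into the subspaces $U_i = \left<e_1+\mu_i e_2\right> \otimes W$, and exploit the explicit relation $(\mu_p-\mu_k)a = (\mu_{p'}-\mu_k)a'$ coming from adjacency. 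The step you flag as the main obstacle does go through: comparing every nonzero vector of $C_p$ against one fixed nonzero vector $(e_1+\mu_{p'}e_2)\otimes a'$ of $C_{p'}$ already pins all $W$-components of $C_p$ to the line $\left<a'\right>$, and doing this for each active index gives a single common line, so $\ell \subseteq V \otimes \left<a_0\right>$. Your $m=2$ endgame can even be simplified: $e_2 \otimes a_0$ lies in $V \otimes \left<a_0\right>$ but never in $\Delta$, so no choice of $\nu$ is needed; alternatively, $\ell$ is contained in a set of size at most $1+z(q-1) < q^2$. The trade-off is that the paper's argument is shorter given the lemmas already established and is more robust (it is really a statement about nets), whereas yours is more elementary and self-contained, effectively classifies the large cliques through the origin, and shows that the counting only needs $q \geqslant z$ rather than $q > z$.
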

\begin{proof}
Since $z^2 < q^2 \leqslant q^m$, there exist distinct vectors $x, y_1, y_2, \ldots, y_{z^2} \in \ell$. By Lemma~\ref{lm:adjacency-implies-clique}, each of the $y_k$ lies in at least one of the cliques $\left\{\ell_1{\left(x \right)}, \ell_2{\left(x \right)}, \ldots, \ell_z{\left(x \right)}\right\}$, so there must exist some $\ell_i{\left(x \right)}$ that contains at least $z$ elements of $\left\{y_1, y_2, \ldots, y_{z^2} \right\}$. Denote these by $x_1, x_2, \ldots, x_z$.

Now assume for contradiction that $\ell \neq \ell_i{\left(x \right)}$. Using Lemma~\ref{lm:cliques} we see that $\ell$ and $\ell_i{\left(x \right)}$ have the same cardinality, so $\ell$ is not contained in $\ell_i{\left(x \right)}$. Hence there must exist some $u \in \ell \setminus \ell_i{\left(x \right)}$. Since $u$ is adjacent to each of the vectors $x_1, \ldots, x_z$, by Lemma~\ref{lm:adjacency-implies-clique} there must exist some $i_1, i_2, \ldots, i_z \in I$ such that $x_k \in \ell_{i_k}{\left(u \right)}$ for all $k \in \left\{1, 2, \ldots, z \right\}$. Since $u \notin \ell_i{\left(x \right)}$, Lemma~\ref{lm:parallel-cliques} implies that $\ell_i{\left(x \right)}$ and $\ell_i{\left(u \right)}$ do not intersect. It follows that none of the $x_k$ lie in $\ell_i{\left(u \right)}$, so none of the $i_k$ are equal to $i$. But $\left|I \setminus \left\{i \right\}\right| = z-1$, so by the pigeonhole principle there exists some $j \in I \setminus \left\{i \right\}$ such that at least two of the $i_k$ are equal to $j$. Thus $\ell_j{\left(u \right)}$ contains at least two of the $x_k$, so $\ell_j{\left(u \right)} \cap \ell_i{\left(x \right)}$ contains at least two vectors. This contradicts Lemma~\ref{lm:clique-intersection}, so we conclude that $\ell = \ell_i{\left(x \right)}$.
\end{proof}

\begin{lemma} \label{lm:automorphisms-permute-parallel-classes}
Let $q > z$, let $x, y \in V \otimes W$, let $i \in I$ and let $\theta \in \Aut{\left(\Gamma \right)}$. Then there exist $x', y' \in V \otimes W$ and a unique $i' \in I$ such that $\left(\ell_i{\left(x \right)}\right)^\theta = \ell_{i'}{\left(x' \right)}$ and $\left(\ell_i{\left(y \right)}\right)^\theta = \ell_{i'}{\left(y' \right)}$.
\end{lemma}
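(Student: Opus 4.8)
The plan is to exploit the fact that a digraph automorphism maps cliques to cliques of the same cardinality, and then to read off the conclusion from the rigid intersection pattern of the cliques $\ell_i(\cdot)$ recorded in Lemmas~\ref{lm:clique-intersection}, \ref{lm:parallel-cliques} and \ref{lm:no-other-cliques}. First I would note that, since $\theta \in \Aut{\left(\Gamma \right)}$ is a bijection preserving adjacency and each $\ell_i{\left(x \right)}$ is a clique of size $q^m$ by Lemma~\ref{lm:cliques}, the images $\left(\ell_i{\left(x \right)}\right)^\theta$ and $\left(\ell_i{\left(y \right)}\right)^\theta$ are again cliques of size $q^m$. Because $q > z$, Lemma~\ref{lm:no-other-cliques} then guarantees that $\left(\ell_i{\left(x \right)}\right)^\theta = \ell_{i'}{\left(x' \right)}$ and $\left(\ell_i{\left(y \right)}\right)^\theta = \ell_{j'}{\left(y' \right)}$ for some $i', j' \in I$ and $x', y' \in V \otimes W$.

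Next I would check that the type of such a clique is well defined, which gives the uniqueness claim. If $\ell_{i'}{\left(x' \right)} = \ell_a{\left(u \right)}$ for some $a \neq i'$, then Lemma~\ref{lm:clique-intersection} would force these two cliques to meet in exactly one vector, which is impossible for two equal sets of cardinality $q^m > 1$. Hence the index $i'$ attached to $\left(\ell_i{\left(x \right)}\right)^\theta$ is uniquely determined, and likewise for $j'$; it therefore only remains to prove that $i' = j'$.

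The crux of the argument is to show that parallel cliques are sent to parallel cliques. If $\ell_i{\left(x \right)} = \ell_i{\left(y \right)}$ this is immediate, since then $\ell_{i'}{\left(x' \right)} = \ell_{j'}{\left(y' \right)}$ and the uniqueness of the type forces $i' = j'$. Otherwise $\ell_i{\left(x \right)} \neq \ell_i{\left(y \right)}$, so $y \notin \ell_i{\left(x \right)}$ by Lemma~\ref{lm:clique-equivalence}, and hence the contrapositive of Lemma~\ref{lm:parallel-cliques} gives $\ell_i{\left(x \right)} \cap \ell_i{\left(y \right)} = \varnothing$. As $\theta$ is injective, the images $\ell_{i'}{\left(x' \right)}$ and $\ell_{j'}{\left(y' \right)}$ are then also disjoint; but Lemma~\ref{lm:clique-intersection} asserts that any two cliques of distinct types meet in exactly one vector, so this disjointness forces $i' = j'$. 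Taking $i'$ to be this common value completes the proof.

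I expect the only substantive point to be this final disjointness step, and I would emphasise where the hypothesis $q > z$ is used: it enters solely through Lemma~\ref{lm:no-other-cliques}, ensuring that the only cliques of size $q^m$ are the $\ell_i{\left(\cdot \right)}$, so that the intersection dichotomy of Lemmas~\ref{lm:clique-intersection} and \ref{lm:parallel-cliques} applies to the images and pins down the type. Everything else follows formally from $\theta$ being a cardinality-preserving, adjacency-preserving bijection.
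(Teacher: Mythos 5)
Your proof is correct and follows essentially the same route as the paper: images of the cliques $\ell_i{\left(\cdot \right)}$ are identified via Lemmas~\ref{lm:cliques} and \ref{lm:no-other-cliques}, uniqueness of the index comes from Lemma~\ref{lm:clique-intersection}, and the case split on whether $y \in \ell_i{\left(x \right)}$ together with the disjointness argument via Lemma~\ref{lm:parallel-cliques} matches the paper's argument exactly.
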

\begin{proof}
Since $\ell_i{\left(x \right)}$ is a clique of size $q^m$ of $\Gamma$ by Lemma~\ref{lm:cliques}, it follows that $\left(\ell_i{\left(x \right)}\right)^\theta$ must also be a clique of size $q^m$ of $\Gamma$. Therefore, by Lemma~\ref{lm:no-other-cliques}, there exist some $x' \in V \otimes W$ and $i' \in I$ such that $\left(\ell_i{\left(x \right)}\right)^\theta = \ell_{i'}{\left(x' \right)}$, with $i'$ being unique as the existence of multiple possible values would contradict Lemma~\ref{lm:clique-intersection}. Similarly, there exist some $y' \in V \otimes W$ and $i'' \in I$ such that $\left(\ell_i{\left(y \right)}\right)^\theta = \ell_{i''}{\left(y' \right)}$.

First suppose $y \in \ell_i{\left(x \right)}$. Then we have $\ell_i{\left(y \right)} = \ell_i{\left(x \right)}$ from Lemma~\ref{lm:clique-equivalence} and hence $\left(\ell_i{\left(y \right)}\right)^\theta = \left(\ell_i{\left(x \right)}\right)^\theta = \ell_{i'}{\left(x' \right)}$, so the result holds in this case.

Now suppose instead that $y \notin \ell_i{\left(x \right)}$. By the contrapositive of Lemma~\ref{lm:parallel-cliques} we have $\ell_i{\left(x \right)} \cap \ell_i{\left(y \right)} = \varnothing$. Hence $\left(\ell_i{\left(x \right)}\right)^\theta \cap \left(\ell_i{\left(y \right)}\right)^\theta = \varnothing$. This means that $\ell_{i'}{\left(x' \right)} \cap \ell_{i''}{\left(y' \right)} = \varnothing$, which contradicts Lemma~\ref{lm:clique-intersection} unless $i' = i''$. The result follows.
\end{proof}

As a consequence of Lemma~\ref{lm:automorphisms-permute-parallel-classes}, provided $q > z$ we can define an action of $\Aut{\left(\Gamma \right)}$ on $I$ such that, if $\left(\ell_i{\left(x \right)}\right)^\theta = \ell_{i'}{\left(x' \right)}$ for any $x, x' \in V \otimes W$ and $\theta \in \Aut{\left(\Gamma \right)}$, then $i^\theta = i'$.

\begin{lemma} \label{lm:automorphisms-permute-cliques}
Let $q > z$, let $x, y \in V \otimes W$, and let $\theta \in \Aut{\left(\Gamma \right)}$. If $\pi_i{\left(x \right)} = \pi_i{\left(y \right)}$, then $\pi_{i^\theta}{\left(x^\theta \right)} = \pi_{i^\theta}{\left(y^\theta \right)}$.
\end{lemma}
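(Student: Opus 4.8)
The plan is to translate the hypothesis $\pi_i(x) = \pi_i(y)$ into a statement about the cliques $\ell_i$, push it forward under $\theta$ using the induced action on $I$, and translate back. First I would observe that by Definition~\ref{def:cliques} the equality $\pi_i(x) = \pi_i(y)$ says precisely that $y \in \ell_i(x)$, and hence Lemma~\ref{lm:clique-equivalence} gives $\ell_i(x) = \ell_i(y)$. So the hypothesis is equivalent to the two vectors lying in a common $\ell_i$-clique, and the goal $\pi_{i^\theta}(x^\theta) = \pi_{i^\theta}(y^\theta)$ is, by the same reasoning, equivalent to $x^\theta$ and $y^\theta$ lying in a common $\ell_{i^\theta}$-clique.

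Next I would identify the image clique explicitly. Since $q > z$, the action of $\Aut(\Gamma)$ on $I$ is defined (via Lemma~\ref{lm:automorphisms-permute-parallel-classes} and the remark following it), so $\left(\ell_i(x)\right)^\theta = \ell_{i^\theta}(x')$ for some $x' \in V \otimes W$. Because $x \in \ell_i(x)$ we have $x^\theta \in \left(\ell_i(x)\right)^\theta = \ell_{i^\theta}(x')$, and a second application of Lemma~\ref{lm:clique-equivalence} re-bases this clique at $x^\theta$, yielding $\left(\ell_i(x)\right)^\theta = \ell_{i^\theta}(x^\theta)$. Likewise $y^\theta \in \left(\ell_i(y)\right)^\theta$, and since $\ell_i(y) = \ell_i(x)$ from the first step, we get $y^\theta \in \left(\ell_i(x)\right)^\theta = \ell_{i^\theta}(x^\theta)$. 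By Definition~\ref{def:cliques} membership $y^\theta \in \ell_{i^\theta}(x^\theta)$ is exactly the desired conclusion $\pi_{i^\theta}(y^\theta) = \pi_{i^\theta}(x^\theta)$.

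I do not expect a genuine obstacle here; the content is almost entirely bookkeeping between the three descriptions of the same object (the projection value $\pi_i$, the clique $\ell_i(\cdot)$, and the induced permutation of $I$). The one point requiring care is that $i^\theta$ must be the same index for both $x$ and $y$: this is guaranteed because $y$ lies in the \emph{same} clique as $x$, so there is a single clique $\ell_i(x) = \ell_i(y)$ whose image is a single $\ell_{i^\theta}$-clique, and the well-definedness of the action on $I$ (which is where the hypothesis $q > z$ enters) ensures $i^\theta$ is unambiguous.
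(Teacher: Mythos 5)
Your proposal is correct and follows essentially the same route as the paper: both identify $\left(\ell_i{\left(x \right)}\right)^\theta$ as a clique of the form $\ell_{i^\theta}{\left(x' \right)}$, re-base it at $x^\theta$ via Lemma~\ref{lm:clique-equivalence}, and conclude by noting $y^\theta$ lies in it. The only cosmetic difference is that you cite the induced action on $I$ (Lemma~\ref{lm:automorphisms-permute-parallel-classes}) where the paper reruns the Lemma~\ref{lm:cliques}/Lemma~\ref{lm:no-other-cliques} argument directly.
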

\begin{proof}
Since $\ell_i{\left(x \right)}$ is a clique of size $q^m$ of $\Gamma$ by Lemma~\ref{lm:cliques}, it follows that $\left(\ell_i{\left(x \right)}\right)^\theta$ is also a clique of size $q^m$. Lemma~\ref{lm:no-other-cliques} therefore implies that $\left(\ell_i{\left(x \right)}\right)^\theta = \ell_{i'}{\left(x' \right)}$ for some $x' \in V \otimes W$ and $i' \in I$. Then $i^\theta = i'$, and since $x \in \ell_i{\left(x \right)}$ we know $x^\theta \in \left(\ell_i{\left(x \right)}\right)^\theta = \ell_{i^\theta}{\left(x' \right)}$. Lemma~\ref{lm:clique-equivalence} then asserts that $\ell_{i^\theta}{\left(x' \right)} = \ell_{i^\theta}{\left(x^\theta \right)}$. Now since $y \in \ell_i{\left(x \right)}$, we must have $y^\theta \in \left(\ell_i{\left(x \right)}\right)^\theta = \ell_{i^\theta}{\left(x^\theta \right)}$ and the result follows from Lemma~\ref{lm:clique-equivalence}.
\end{proof}

\begin{lemma} \label{lm:image-of-projection-to-zero}
Let $q > z$, let $x \in V \otimes W$ and let $\theta \in \Aut{\left(\Gamma \right)}_0$. If $\pi_i{\left(x \right)} = 0$ for some $i \in I$, then $\pi_{i^\theta}{\left(x^\theta \right)} = 0$.
\end{lemma}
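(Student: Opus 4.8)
The plan is to derive this statement as an immediate corollary of Lemma~\ref{lm:automorphisms-permute-cliques}, taking the second vector to be $0$. First I would record that each $\pi_i$ is linear by Lemma~\ref{lm:projections-preserve-addition-and-multiplication}; in particular $\pi_i(0) = \pi_i(0 \cdot x) = 0 \cdot \pi_i(x) = 0$ for every $i \in I$. Hence the hypothesis $\pi_i(x) = 0$ can be rewritten as the equality $\pi_i(x) = \pi_i(0)$ between the $i$-th projections of the two vectors $x$ and $0$.

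Next I would apply Lemma~\ref{lm:automorphisms-permute-cliques} to the pair of vectors $x$ and $0$, which is legitimate because $q > z$ is assumed (this is also precisely the hypothesis that makes the induced action of $\Aut(\Gamma)$ on $I$, and therefore the symbol $i^\theta$, well defined). Since $\pi_i(x) = \pi_i(0)$, that lemma yields $\pi_{i^\theta}(x^\theta) = \pi_{i^\theta}(0^\theta)$.

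Finally, since $\theta \in \Aut(\Gamma)_0$ stabilises the origin, we have $0^\theta = 0$, so a further appeal to linearity gives $\pi_{i^\theta}(0^\theta) = \pi_{i^\theta}(0) = 0$. Combining these equalities produces $\pi_{i^\theta}(x^\theta) = 0$, as required. I do not expect any genuine obstacle here: the proof is essentially a substitution, and the only points needing care are making the origin-fixing hypothesis explicit so that $0^\theta = 0$ may legitimately be used, and noting that $q > z$ guarantees that $i^\theta$ is meaningful in the first place.
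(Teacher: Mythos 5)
Your proof is correct and is essentially identical to the paper's: both observe that $\pi_i(x) = 0 = \pi_i(0)$, apply Lemma~\ref{lm:automorphisms-permute-cliques} to the pair $x$ and $0$, and then use $0^\theta = 0$ together with $\pi_{i^\theta}(0) = 0$ to conclude. Your version simply spells out the linearity and origin-fixing steps that the paper leaves implicit.
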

\begin{proof}
Observing that $\pi_i{\left(0 \right)} = 0 = \pi_i{\left(x \right)}$, we have from Lemma~\ref{lm:automorphisms-permute-cliques} that $\pi_{i^\theta}{\left(x^\theta \right)} = \pi_{i^\theta}{\left(0^\theta \right)} = \pi_{i^\theta}{\left(0 \right)} = 0$.
\end{proof}

\begin{lemma} \label{lm:automorphism-linear-sufficiency}
Suppose $q > z$ and $q$ is prime, and let $\theta \in \Aut{\left(\Gamma \right)}$. If
\begin{displaymath}
    \pi_{i^\theta}{\left(\left(\alpha + \beta \right)^\theta \right)} = \pi_{i^\theta}{\left(\alpha^\theta \right)} + \pi_{i^\theta}{\left(\beta^\theta \right)}
\end{displaymath}
holds for all $\alpha, \beta \in V \otimes W$ and $i \in I$, then $\theta$ is a linear transformation of $V \otimes W$.
\end{lemma}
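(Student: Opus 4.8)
The plan is to reduce everything to additivity and then exploit primality. Since $q=p$ is prime, $\GF(q)$ is the prime field, so any additive self-map of $V \otimes W$ is automatically $\GF(q)$-linear: for a scalar $\kappa \in \left\{0, 1, \ldots, q-1 \right\}$ the product $\kappa u$ is the $\kappa$-fold sum $u + \cdots + u$, whence $(\kappa u)^\theta = \kappa\, u^\theta$ follows from additivity. Additivity also forces $0^\theta = 0$ (take $\alpha = \beta = 0$), so $\theta$ fixes the origin without further argument. It therefore suffices to prove $(\alpha + \beta)^\theta = \alpha^\theta + \beta^\theta$ for all $\alpha, \beta \in V \otimes W$.

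To establish this, I would pin down vectors by two projections. Fix distinct $i, j \in I$. Because $q > z$, Lemma~\ref{lm:automorphisms-permute-parallel-classes} allows us to treat $\theta$ as a permutation of $I$, so $i^\theta \neq j^\theta$, and Lemma~\ref{lm:two-projections-determines-tensor} then guarantees that a vector $y \in V \otimes W$ is uniquely determined by the pair $\big(\pi_{i^\theta}(y),\, \pi_{j^\theta}(y)\big)$. Hence to prove $(\alpha+\beta)^\theta = \alpha^\theta + \beta^\theta$ it is enough to verify that these two vectors share the same $\pi_{i^\theta}$-projection and the same $\pi_{j^\theta}$-projection.

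For the index $i^\theta$, the hypothesis of the lemma gives $\pi_{i^\theta}\big((\alpha+\beta)^\theta\big) = \pi_{i^\theta}(\alpha^\theta) + \pi_{i^\theta}(\beta^\theta)$, while Lemma~\ref{lm:projections-preserve-addition-and-multiplication} gives $\pi_{i^\theta}(\alpha^\theta + \beta^\theta) = \pi_{i^\theta}(\alpha^\theta) + \pi_{i^\theta}(\beta^\theta)$, so the two projections agree; the identical computation with $j^\theta$ in place of $i^\theta$ handles the second coordinate. By Lemma~\ref{lm:two-projections-determines-tensor} we conclude $(\alpha+\beta)^\theta = \alpha^\theta + \beta^\theta$, which gives additivity and hence linearity.

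The proof is short, and the only point needing care is the bookkeeping around the induced action of $\theta$ on $I$. The hypothesis is phrased using $\pi_{i^\theta}$ rather than $\pi_i$, and the argument succeeds precisely because $\theta$ permutes $I$, so that choosing $i \neq j$ yields $i^\theta \neq j^\theta$ and keeps us inside the hypotheses of Lemma~\ref{lm:two-projections-determines-tensor}. This is exactly where the assumption $q > z$ is invoked; everything else is a direct substitution.
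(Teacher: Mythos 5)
Your proof is correct and follows essentially the same route as the paper's: reduce linearity to additivity using that $\GF(q)$ is a prime field, then verify $(\alpha+\beta)^\theta = \alpha^\theta + \beta^\theta$ by checking that both sides have the same projections in two distinct coordinates (hypothesis for one side, Lemma~\ref{lm:projections-preserve-addition-and-multiplication} for the other). The only cosmetic difference is that the paper writes out the $(\pi_1,\pi_2)$-decomposition explicitly while you invoke the uniqueness statement of Lemma~\ref{lm:two-projections-determines-tensor} applied to the pair $(i^\theta, j^\theta)$; both amount to the same fact.
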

\begin{proof}
As $V \otimes W$ is a vector space over a field of prime order $q$, it suffices to show that $\theta$ preserves addition. Indeed, for all $\alpha, \beta \in V \otimes W$, we see that
\begin{align*}
    \left(\alpha + \beta \right)^\theta &= \left(e_1 + \mu_1 e_2 \right) \otimes \pi_1{\left(\left(\alpha + \beta \right)^\theta \right)} + \left(e_1 + \mu_2 e_2 \right) \otimes \pi_2{\left(\left(\alpha + \beta \right)^\theta \right)} \\
    &= \left(e_1 + \mu_1 e_2 \right) \otimes \left(\pi_1{\left(\alpha^\theta \right)} + \pi_1{\left(\beta^\theta \right)}\right) + \left(e_1 + \mu_2 e_2 \right) \otimes \left(\pi_2{\left(\alpha^\theta \right)} + \pi_2{\left(\beta^\theta \right)}\right) \\
    &= \left(e_1 + \mu_1 e_2 \right) \otimes \pi_1{\left(\alpha^\theta \right)} + \left(e_1 + \mu_2 e_2 \right) \otimes \pi_2{\left(\alpha^\theta \right)} \\
    &\ \ \ \ \ \ \ + \left(e_1 + \mu_1 e_2 \right) \otimes \pi_1{\left(\beta^\theta \right)} + \left(e_1 + \mu_2 e_2 \right) \otimes \pi_2{\left(\beta^\theta \right)} \\
    &= \alpha^\theta + \beta^\theta
\end{align*}
as required.
\end{proof}

\begin{lemma} \label{lm:automorphism-stabiliser-linear}
Suppose $q > z$ and $q$ is prime, and let $\theta \in \Aut{\left(\Gamma \right)}_0$. Then $\theta$ is a linear transformation of $V \otimes W$.
\end{lemma}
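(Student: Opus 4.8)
The plan is to apply Lemma~\ref{lm:automorphism-linear-sufficiency}: since $q$ is prime, it suffices to verify the single additivity identity $\pi_{i^\theta}{\left(\left(\alpha + \beta \right)^\theta \right)} = \pi_{i^\theta}{\left(\alpha^\theta \right)} + \pi_{i^\theta}{\left(\beta^\theta \right)}$ for all $\alpha, \beta \in V \otimes W$ and all $i \in I$. The starting observation is that $\theta$ permutes the parallel classes of cliques, via the action of $\Aut(\Gamma)$ on $I$ defined after Lemma~\ref{lm:automorphisms-permute-parallel-classes}, and that by Lemma~\ref{lm:automorphisms-permute-cliques} the value $\pi_{i^\theta}{\left(x^\theta \right)}$ depends only on $\pi_i{\left(x \right)}$. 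Since $\pi_i$ is surjective onto $W$ by Lemma~\ref{lm:two-projections-determines-tensor}, this lets me define, for each $i \in I$, a well-defined map $\phi_i \colon W \to W$ by $\phi_i{\left(\pi_i{\left(x \right)}\right)} = \pi_{i^\theta}{\left(x^\theta \right)}$. Because $\pi_i$ is additive by Lemma~\ref{lm:projections-preserve-addition-and-multiplication}, the desired identity reduces to showing that every $\phi_i$ is additive.

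The crux is establishing the additivity of each $\phi_k$, and this is where Lemma~\ref{lm:projection-relations} does the work. First, Lemma~\ref{lm:image-of-projection-to-zero} gives $\phi_i{\left(0 \right)} = 0$ for every $i$. Now fix $k$ and choose $i, j \in I$ with $i, j, k$ distinct, which is possible as $z \geqslant 4$. Lemma~\ref{lm:projection-relations} supplies nonzero $\kappa_1, \kappa_2$ with $\pi_k{\left(x \right)} = \kappa_1 \pi_i{\left(x \right)} + \kappa_2 \pi_j{\left(x \right)}$ for all $x$; applied to the distinct triple $i^\theta, j^\theta, k^\theta$ it also supplies nonzero $\kappa_1', \kappa_2'$ with $\pi_{k^\theta}{\left(y \right)} = \kappa_1' \pi_{i^\theta}{\left(y \right)} + \kappa_2' \pi_{j^\theta}{\left(y \right)}$ for all $y$. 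Substituting $y = x^\theta$ and rewriting both sides through the $\phi$'s yields $\phi_k{\left(\kappa_1 a + \kappa_2 b \right)} = \kappa_1' \phi_i{\left(a \right)} + \kappa_2' \phi_j{\left(b \right)}$, where $a = \pi_i{\left(x \right)}$ and $b = \pi_j{\left(x \right)}$ range independently over $W$ by Lemma~\ref{lm:two-projections-determines-tensor}. Setting $b = 0$ and then $a = 0$, and using $\phi_i{\left(0 \right)} = \phi_j{\left(0 \right)} = 0$, gives $\phi_k{\left(\kappa_1 a \right)} = \kappa_1' \phi_i{\left(a \right)}$ and $\phi_k{\left(\kappa_2 b \right)} = \kappa_2' \phi_j{\left(b \right)}$, whence $\phi_k{\left(\kappa_1 a + \kappa_2 b \right)} = \phi_k{\left(\kappa_1 a \right)} + \phi_k{\left(\kappa_2 b \right)}$. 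As $\kappa_1, \kappa_2$ are nonzero, rescaling $a \mapsto \kappa_1^{-1} u$ and $b \mapsto \kappa_2^{-1} v$ shows $\phi_k{\left(u + v \right)} = \phi_k{\left(u \right)} + \phi_k{\left(v \right)}$ for all $u, v \in W$, so $\phi_k$ is additive.

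With each $\phi_i$ additive, the chain
\[
\pi_{i^\theta}{\left(\left(\alpha + \beta \right)^\theta \right)} = \phi_i{\left(\pi_i{\left(\alpha + \beta \right)}\right)} = \phi_i{\left(\pi_i{\left(\alpha \right)} + \pi_i{\left(\beta \right)}\right)} = \phi_i{\left(\pi_i{\left(\alpha \right)}\right)} + \phi_i{\left(\pi_i{\left(\beta \right)}\right)} = \pi_{i^\theta}{\left(\alpha^\theta \right)} + \pi_{i^\theta}{\left(\beta^\theta \right)}
\]
verifies the hypothesis of Lemma~\ref{lm:automorphism-linear-sufficiency}, which then concludes that $\theta$ is a linear transformation of $V \otimes W$. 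The one genuinely delicate point is the additivity of the $\phi_i$: everything hinges on the nonvanishing of the coefficients in Lemma~\ref{lm:projection-relations}, which is exactly what allows the two contributions $\kappa_1 a$ and $\kappa_2 b$ to be decoupled and then rescaled to arbitrary elements of $W$.
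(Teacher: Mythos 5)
Your proposal is correct and follows essentially the same route as the paper: both reduce to Lemma~\ref{lm:automorphism-linear-sufficiency} and both extract additivity from the nonvanishing coefficients in Lemma~\ref{lm:projection-relations} combined with Lemmas~\ref{lm:two-projections-determines-tensor}, \ref{lm:automorphisms-permute-cliques} and \ref{lm:image-of-projection-to-zero}. Your packaging via the well-defined maps $\phi_i$ is a cleaner presentation of the same decoupling argument that the paper carries out with explicit auxiliary vectors $a$, $b$, $c$.
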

\begin{proof}
Let $x, y \in V \otimes W$, and let $i, j, k$ be distinct elements of $I$. In light of Lemma~\ref{lm:automorphism-linear-sufficiency}, it suffices to show that
\begin{displaymath}
    \pi_{i^\theta}{\left(\left(x + y \right)^\theta \right)} = \pi_{i^\theta}{\left(x^\theta \right)} + \pi_{i^\theta}{\left(y^\theta \right)}.
\end{displaymath}

According to Lemma~\ref{lm:projection-relations} there exist non-zero $\kappa_1, \kappa_2, \kappa'_1, \kappa'_2 \in \GF{\left(q \right)}$ such that
\begin{align}
    \pi_k{\left(u \right)} &= \kappa_1 \pi_i{\left(u \right)} + \kappa_2 \pi_j{\left(u \right)}, \label{eqn:specific-projection-relation} \\
    \pi_{k^\theta}{\left(u \right)} &= \kappa'_1 \pi_{i^\theta}{\left(u \right)} + \kappa'_2 \pi_{j^\theta}{\left(u \right)} \label{eqn:specific-projection-relation-image}
\end{align}
for all $u \in V \otimes W$.

According to Lemma~\ref{lm:two-projections-determines-tensor}, we can define $a \in V \otimes W$ such that $\pi_i{\left(a \right)} = \pi_i{\left(x \right)}$ and $\pi_j{\left(a \right)} = 0$. Hence Equation \eqref{eqn:specific-projection-relation} gives $\pi_k{\left(a \right)} = \kappa_1 \pi_i{\left(a \right)} + \kappa_2 \pi_j{\left(a \right)} = \kappa_1 \pi_i{\left(x \right)}$.

Now define $b \in V \otimes W$ such that $\pi_i{\left(b \right)} = \pi_i{\left(y \right)}$ and $\pi_j{\left(b \right)} = -\frac{\kappa_1}{\kappa_2} \pi_i{\left(y \right)}$. Hence Equation \eqref{eqn:specific-projection-relation} gives $\pi_k{\left(b \right)} = \kappa_1 \pi_i{\left(b \right)} + \kappa_2 \pi_j{\left(b \right)} = 0$.

Finally, define $c \in V \otimes W$ such that $\pi_i{\left(c \right)} = \pi_i{\left(x \right)} + \pi_i{\left(y \right)}$ and $\pi_j{\left(c \right)} = -\frac{\kappa_1}{\kappa_2} \pi_i{\left(y \right)}$. Observe that $\pi_j{\left(c \right)} = \pi_j{\left(b \right)}$, so we have
\begin{align}
    \pi_{j^\theta}{\left(c^\theta \right)} &= \pi_{j^\theta}{\left(b^\theta \right)} & \text{(using Lemma~\ref{lm:automorphisms-permute-cliques})} \notag \\
    &= -\frac{\kappa'_1}{\kappa'_2} \pi_{i^\theta}{\left(b^\theta \right)} + \frac{1}{\kappa'_2} \pi_{k^\theta}{\left(b^\theta \right)} & \text{(using Equation \eqref{eqn:specific-projection-relation-image})} \notag \\
    &= -\frac{\kappa'_1}{\kappa'_2} \pi_{i^\theta}{\left(b^\theta \right)}. & \text{(using Lemma~\ref{lm:image-of-projection-to-zero})} \label{eqn:pi_j-image}
\end{align}
Observe also that
\begin{displaymath}
    \pi_k{\left(c \right)} = \kappa_1 \pi_i{\left(c \right)} + \kappa_2 \pi_j{\left(c \right)} = \kappa_1 \pi_i{\left(x \right)} = \pi_k{\left(a \right)},
\end{displaymath}
which gives us
\begin{align}
    \pi_{k^\theta}{\left(c^\theta \right)} &= \pi_{k^\theta}{\left(a^\theta \right)} & \text{(using Lemma~\ref{lm:automorphisms-permute-cliques})} \notag \\
    &= \kappa'_1 \pi_{i^\theta}{\left(a^\theta \right)} + \kappa'_2 \pi_{j^\theta}{\left(a^\theta \right)} & \text{(using Equation \eqref{eqn:specific-projection-relation-image})} \notag \\
    &= \kappa'_1 \pi_{i^\theta}{\left(a^\theta \right)}. & \text{(using Lemma~\ref{lm:image-of-projection-to-zero})} \label{eqn:pi_k-image}
\end{align}
Making use of Lemma~\ref{lm:projections-preserve-addition-and-multiplication}, we have
\begin{displaymath}
    \pi_i{\left(x + y \right)} = \pi_i{\left(x \right)} + \pi_i{\left(y \right)} = \pi_i{\left(c \right)}.
\end{displaymath}
We therefore conclude that
\begin{align*}
    \pi_{i^\theta}{\left(\left(x + y \right)^\theta \right)}
    &= \pi_{i^\theta}{\left(c^\theta \right)} & \text{(using Lemma~\ref{lm:automorphisms-permute-cliques})} \\
    &= \frac{1}{\kappa'_1} \pi_{k^\theta}{\left(c^\theta \right)} -\frac{\kappa'_2}{\kappa'_1} \pi_{j^\theta}{\left(c^\theta \right)} & \text{(using Equation \eqref{eqn:specific-projection-relation-image})} \\
    &= \pi_{i^\theta}{\left(a^\theta \right)} + \pi_{i^\theta}{\left(b^\theta \right)} & \text{(using Equations \eqref{eqn:pi_j-image} and \eqref{eqn:pi_k-image})} \\
    &= \pi_{i^\theta}{\left(x^\theta \right)} + \pi_{i^\theta}{\left(y^\theta \right)} & \text{(using Lemma~\ref{lm:automorphisms-permute-cliques} on each term)}
\end{align*}
as required.
\end{proof}

\begin{lemma} \label{lm:automorphism-stabiliser-fixes-tensor-decomposition}
Suppose $q > z$ and $q$ is prime, and let $\theta \in \Aut{\left(\Gamma \right)}_0$. Then $\theta \in \GL{\left(2, q \right)} \circ \GL{\left(m, q \right)}$.
\end{lemma}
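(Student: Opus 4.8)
The plan is to build on Lemma~\ref{lm:automorphism-stabiliser-linear}, which (using $q$ prime and $q>z$) already tells us that $\theta$ is a linear transformation of $V\otimes W$; it remains only to show that $\theta$ respects the tensor decomposition. First I would identify the geometric data that $\theta$ must preserve. I claim the product subspaces $\langle e_1+\mu_j e_2\rangle\otimes W$ for $j\in I$ are precisely the cliques of $\Gamma$ of size $q^m$ that pass through $0$: each equals some $\ell_i(0)$ (Definition~\ref{def:cliques}) and hence is a clique of size $q^m$ by Lemma~\ref{lm:cliques}, while conversely Lemma~\ref{lm:no-other-cliques} says every clique of size $q^m$ is an $\ell_i(x)$, and such a clique contains $0$ only if it equals $\ell_i(0)$. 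Since $\theta$ fixes $0$ and sends cliques of size $q^m$ to cliques of size $q^m$, it permutes this set of $z$ subspaces; these are genuinely distinct because the $\mu_j$ are distinct.

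The second and main step is a normalisation using the projective line $\PG(V)=\PG(1,q)$. The permutation that $\theta$ induces on the $z$ special subspaces corresponds to a permutation $\bar\theta$ of the $z$ distinct points $\langle e_1+\mu_j e_2\rangle$ of $\PG(1,q)$. Because $z\geqslant 3$ and $\PGL(2,q)$ is sharply $3$-transitive on $\PG(1,q)$, I can choose three of these points and find $A\in\GL(2,q)$ whose projective image carries the $\bar\theta$-images of those three points back to the points themselves. Replacing $\theta$ by $(A\circ\mathrm{id}_W)\theta$ — which lies in $\GL(2,q)\circ\GL(m,q)$ if and only if $\theta$ does — I may therefore assume $\theta$ fixes three of the special product subspaces setwise, say those corresponding to $\mu_1,\mu_2,\mu_3$. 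I would stress that $\bar\theta$ itself need not be induced by any projective map; the argument only requires the composite to fix three subspaces.

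Once $\theta$ fixes these three subspaces the tensor structure falls out of a short linear computation. Taking $u_1=e_1+\mu_1 e_2$ and $u_2=e_1+\mu_2 e_2$ as a basis of $V$, every vector is uniquely $u_1\otimes w_1+u_2\otimes w_2$, and the fact that $\theta$ preserves $\langle u_1\rangle\otimes W$ and $\langle u_2\rangle\otimes W$ forces $\theta(u_1\otimes w_1+u_2\otimes w_2)=u_1\otimes Pw_1+u_2\otimes Sw_2$ for some $P,S\in\GL(W)$. Writing $e_1+\mu_3 e_2=au_1+bu_2$ with $a,b\neq 0$ (possible since $\mu_3\notin\{\mu_1,\mu_2\}$) and imposing that $\theta$ fixes $\langle e_1+\mu_3 e_2\rangle\otimes W$ then forces $P=S$. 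Hence $\theta=\mathrm{id}_V\circ P$, so the original automorphism equals $A^{-1}\circ P\in\GL(2,q)\circ\GL(m,q)$. Equivalently, having shown that $\theta$ fixes every product subspace, one sees that $\theta$ preserves all simple tensors, and Lemma~\ref{lm:preserves-simple-tensors-preserves-decomp} packages this as preservation of the decomposition.

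The hard part is conceptual rather than computational: the clique structure of $\Gamma$ only ``sees'' the $z$ special directions, so knowing that $\theta$ permutes those subspaces does \emph{not} directly control $\theta$ on the remaining product subspaces, and one should expect at this stage only that $\theta$ permutes finitely many subspaces. The non-obvious move that unlocks the proof is to use sharp $3$-transitivity of $\PGL(2,q)$ to pre-compose $\theta$ with a tensor-decomposable map so that three special subspaces are fixed; after this normalisation the full tensor structure is pinned down by elementary linear algebra, and the hypotheses $q>z$ (for Lemma~\ref{lm:no-other-cliques}) and $q$ prime (for linearity) are exactly what make the preceding lemmas applicable.
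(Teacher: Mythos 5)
Your proof is correct, but after the shared first step (linearity of $\theta$ via Lemma~\ref{lm:automorphism-stabiliser-linear}) it takes a genuinely different route from the paper's. The paper argues pointwise: for an arbitrary simple tensor $x$ it uses the projection maps $\pi_i$, together with Lemmas~\ref{lm:automorphisms-permute-cliques} and~\ref{lm:image-of-projection-to-zero}, to show that the proportionality between $\pi_1{\left(x\right)}$ and $\pi_2{\left(x\right)}$ is inherited by $x^\theta$, concludes that $\theta$ preserves the set of simple tensors, and then invokes Lemma~\ref{lm:preserves-simple-tensors-preserves-decomp} (plus a remark about the asymmetry of $\Delta$ in the two tensor factors) to land in $\GL{\left(2,q\right)}\circ\GL{\left(m,q\right)}$. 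You instead identify the $z$ subspaces $\left<e_1+\mu_j e_2\right>\otimes W$ as exactly the cliques of size $q^m$ through $0$ --- this identification is right: $\ell_i{\left(0\right)}$ is the product subspace attached to the partner index of $i$, and Lemma~\ref{lm:no-other-cliques} supplies the converse --- so that $\theta$ permutes them; you then normalise by an element of $\GL{\left(2,q\right)}\circ\GL{\left(m,q\right)}$ using sharp $3$-transitivity of $\PGL{\left(2,q\right)}$ to fix three of these subspaces, and finish with elementary linear algebra forcing $P=S$. Your route produces the required element of $\GL{\left(2,q\right)}\circ\GL{\left(m,q\right)}$ explicitly, avoids both the lengthy projection computation and the appeal to Lemma~\ref{lm:preserves-simple-tensors-preserves-decomp}, and in particular sidesteps the paper's somewhat glossed point that preserving the decomposition only yields membership in $\GL{\left(2,q\right)}\circ\GL{\left(m,q\right)}$ because $\Delta$ distinguishes the two factors (an issue only when $m=2$); the paper's route, in exchange, recycles machinery already built for Lemma~\ref{lm:automorphism-stabiliser-linear}. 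The only thing to tidy is the order of composition in $\left(A\circ\mathrm{id}_W\right)\theta$ versus the paper's right-action convention, which does not affect correctness.
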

\begin{proof}
We first note that $\theta$ is a linear transformation of $V \otimes W$ by Lemma~\ref{lm:automorphism-stabiliser-linear}, so it preserves the set $\Delta$ by Lemma~\ref{lm:endomorphism-automorphism}. If $\theta$ preserves the set of all simple tensors then by Lemma~\ref{lm:preserves-simple-tensors-preserves-decomp} it would preserve the tensor product structure of $V \otimes W$, and would hence lie in $\GL{\left(2, q \right)} \circ \GL{\left(m, q \right)}$ due to the asymmetry of $\Delta$ on this tensor product structure. Therefore, letting $x$ be an arbitrary simple tensor in $V \otimes W$, it suffices to show that $x^\theta$ is a simple tensor.

Since $x$ is a simple tensor, we must have $x = \left(\nu_1 e_1 + \nu_2 e_2 \right) \otimes w$ for some $\nu_1, \nu_2 \in \GF{\left(q \right)}$ and $w \in W$. Suppose first that $-\mu_2 \nu_1 + \nu_2 = 0$. Then
\begin{displaymath}
    x = \left(\nu_1 e_1 + \mu_2 \nu_1 e_2 \right) \otimes w = \left(e_1 + \mu_2 e_2 \right) \otimes \nu_1 w \in \Delta \cup {\left\{0 \right\}}.
\end{displaymath}
Since $\Delta \cup \left\{0 \right\}$ is preserved by $\theta$ and contains only simple tensors, it follows that $x^\theta$ is a simple tensor as required.

Now suppose instead that $-\mu_2 \nu_1 + \nu_2 \neq 0$. It is easy to check using the definition of $x$ that
\begin{displaymath}
    x = \left(e_1 + \mu_1 e_2 \right) \otimes \left(\frac{-\mu_2 \nu_1 + \nu_2}{\mu_1 - \mu_2} w \right) + \left(e_1 + \mu_2 e_2 \right) \otimes {\left(\frac{\mu_1 \nu_1 - \nu_2}{\mu_1 - \mu_2} w \right)}.
\end{displaymath}
It follows immediately that
\begin{align}
    \pi_2{\left(x \right)} = \frac{\mu_1 \nu_1 - \nu_2}{-\mu_2 \nu_1 + \nu_2} \pi_1{\left(x \right)}. \label{eqn:projection-ratio}
\end{align}

By Lemma~\ref{lm:two-projections-determines-tensor} we can define $y \in V \otimes W$ such that $\pi_1{\left(y \right)} = \pi_1{\left(x \right)}$ and $\pi_3{\left(y \right)} = 0$. Then Lemmas \ref{lm:automorphisms-permute-cliques} and \ref{lm:image-of-projection-to-zero} respectively give
\begin{align}
    \pi_{1^\theta}{\left(y^\theta \right)} &= \pi_{1^\theta}{\left(x^\theta \right)}, \label{eqn:matching-projection-1-image} \\
    \pi_{3^\theta}{\left(y^\theta \right)} &= 0. \label{eqn:projection-3-image-zero}
\end{align}

According to Lemma~\ref{lm:projection-relations} there exist $\kappa_1, \kappa_3, \kappa'_1, \kappa'_3, \iota_1, \iota_2, \iota'_1, \iota'_2 \in \GF{\left(q \right)}$ such that
\begin{align}
    \pi_2{\left(y \right)} &= \kappa_1 \pi_1{\left(y \right)} + \kappa_3 \pi_3{\left(y \right)}, \label{eqn:specific-projection-relation-simple-tensors} \\
    \pi_{2^\theta}{\left(y^\theta \right)} &= \kappa'_1 \pi_{1^\theta}{\left(y^\theta \right)} + \kappa'_3 \pi_{3^\theta}{\left(y^\theta \right)}, \label{eqn:specific-projection-relation-image-simple-tensors} \\
    \pi_1{\left(x^\theta \right)} &= \iota_1 \pi_{1^\theta}{\left(x^\theta \right)} + \iota_2 \pi_{2^\theta}{\left(x^\theta \right)}, \label{eqn:pi-1-of-image} \\
    \pi_2{\left(x^\theta \right)} &= \iota'_1 \pi_{1^\theta}{\left(x^\theta \right)} + \iota'_2 \pi_{2^\theta}{\left(x^\theta \right)}, \label{eqn:pi-2-of-image}
\end{align}
where $\kappa_1 \neq 0$. Hence we have
\begin{align*}
    \pi_2{\left(\frac{\mu_1 \nu_1 - \nu_2}{\kappa_1 \left(-\mu_2 \nu_1 + \nu_2 \right)} y \right)} &= \frac{\mu_1 \nu_1 - \nu_2}{\kappa_1 \left(-\mu_2 \nu_1 + \nu_2 \right)} \pi_2{\left(y \right)} &\text{(using Lemma~\ref{lm:projections-preserve-addition-and-multiplication})} \\
    &= \frac{\mu_1 \nu_1 - \nu_2}{\kappa_1 \left(-\mu_2 \nu_1 + \nu_2 \right)} \left(\kappa_1 \pi_1{\left(y \right)} + \kappa_3 \pi_3{\left(y \right)} \right) &\text{(using Equation \eqref{eqn:specific-projection-relation-simple-tensors})} \\
    &= \frac{\mu_1 \nu_1 - \nu_2}{-\mu_2 \nu_1 + \nu_2} \pi_1 {\left(x \right)} &\text{(since $\pi_1{\left(y \right)} = \pi_1{\left(x \right)}$ and $\pi_3{\left(y \right)} = 0$)} \\
    &= \pi_2{\left(x \right)}. &\text{(using Equation \eqref{eqn:projection-ratio})}
\end{align*}

Therefore
\begin{align}
    \pi_{2^\theta}{\left(x^\theta \right)} &= \pi_{2^\theta}{\left(\left(\frac{\mu_1 \nu_1 - \nu_2}{\kappa_1 \left(-\mu_2 \nu_1 - \nu_2 \right)} y \right)^\theta \right)} &\text{(using Lemma~\ref{lm:automorphisms-permute-cliques})} \notag \\
    &= \pi_{2^\theta}{\left(\frac{\mu_1 \nu_1 - \nu_2}{\kappa_1 \left(-\mu_2 \nu_1 - \nu_2 \right)} y^\theta \right)} &\text{(using Lemma~\ref{lm:automorphism-stabiliser-linear})} \notag \\
    &= \frac{\mu_1 \nu_1 - \nu_2}{\kappa_1 \left(-\mu_2 \nu_1 - \nu_2 \right)} \pi_{2^\theta}{\left(y^\theta \right)} &\text{(using Lemma~\ref{lm:projections-preserve-addition-and-multiplication})} \notag \\
    &= \frac{\mu_1 \nu_1 - \nu_2}{\kappa_1 \left(-\mu_2 \nu_1 - \nu_2 \right)} \left(\kappa'_1 \pi_{1^\theta}{\left(y^\theta \right)} + \kappa'_3 \pi_{3^\theta}{\left(y^\theta \right)}\right) &\text{(using Equation \eqref{eqn:specific-projection-relation-image-simple-tensors})} \notag \\
    &= \frac{\mu_1 \nu_1 - \nu_2}{\kappa_1 \left(-\mu_2 \nu_1 - \nu_2 \right)} \kappa'_1 \pi_{1^\theta}{\left(x^\theta \right)} &\text{(using Equations \eqref{eqn:matching-projection-1-image} and \eqref{eqn:projection-3-image-zero})} \label{eqn:pi-2-image}
\end{align}
and finally
\begin{align*}
    x^\theta &= \left(e_1 + \mu_1 e_2 \right) \otimes \pi_1{\left(x^\theta \right)} + \left(e_1 + \mu_2 e_2 \right) \otimes \pi_2{\left(x^\theta \right)} \\
    &= \left(e_1 + \mu_1 e_2 \right) \otimes \left(\iota_1 \pi_{1^\theta}{\left(x^\theta \right)} + \iota_2 \pi_{2^\theta}{\left(x^\theta \right)}\right) \\
    &\ \ \ \ \ \ + \left(e_1 + \mu_2 e_2 \right) \otimes \left(\iota'_1 \pi_{1^\theta}{\left(x^\theta \right)} + \iota'_2 \pi_{2^\theta}{\left(x^\theta \right)}\right)  &\text{(using Equations \eqref{eqn:pi-1-of-image} and \eqref{eqn:pi-2-of-image})} \\
    &= \left(e_1 + \mu_1 e_2 \right) \otimes \left(\iota_1 \pi_{1^\theta}{\left(x^\theta \right)} + \iota_2 \frac{\mu_1 \nu_1 - \nu_2}{\kappa_1 \left(-\mu_2 \nu_1 - \nu_2 \right)} \kappa'_1 \pi_{1^\theta}{\left(x^\theta \right)}\right) \\
    &\ \ \ \ \ \ + \left(e_1 + \mu_2 e_2 \right) \otimes \left(\iota'_1 \pi_{1^\theta}{\left(x^\theta \right)} + \iota'_2 \frac{\mu_1 \nu_1 - \nu_2}{\kappa_1 \left(-\mu_2 \nu_1 - \nu_2 \right)} \kappa'_1 \pi_{1^\theta}{\left(x^\theta \right)}\right) &\text{(using Equation \eqref{eqn:pi-2-image})} \\
    &= \left(e_1 + \mu_1 e_2 \right) \otimes \left(\iota_1 + \iota_2 \frac{\mu_1 \nu_1 - \nu_2}{\kappa_1 \left(-\mu_2 \nu_1 - \nu_2 \right)} \kappa'_1 \right) \pi_{1^\theta}{\left(x^\theta \right)} \\
    &\ \ \ \ \ \ + \left(e_1 + \mu_2 e_2 \right) \otimes \left(\iota'_1 + \iota'_2 \frac{\mu_1 \nu_1 - \nu_2}{\kappa_1 \left(-\mu_2 \nu_1 - \nu_2 \right)} \kappa'_1 \right) \pi_{1^\theta}{\left(x^\theta \right)},
\end{align*}
so $x^\theta$ is equal to
\begin{displaymath}
    \left(\left(\iota_1 + \iota_2 \frac{\mu_1 \nu_1 - \nu_2}{\kappa_1 \left(-\mu_2 \nu_1 - \nu_2 \right)} \kappa'_1 \right) \left(e_1 + \mu_1 e_2 \right) + \left(\iota'_1 + \iota'_2 \frac{\mu_1 \nu_1 - \nu_2}{\kappa_1 \left(-\mu_2 \nu_1 - \nu_2 \right)} \kappa'_1 \right) \left(e_1 + \mu_2 e_2 \right)\right) \otimes \pi_{1^\theta}{\left(x^\theta \right)}
\end{displaymath}
and is therefore a simple tensor as required.
\end{proof}

\begin{proposition} \label{prop:automorphism-group-affine-decomp}
Suppose $q > z$ and $q$ is prime, let $\mu_1$, $\mu_2$, $\mu_3$, $\mu_4$ be distinct elements of $\GF{\left(q \right)}$, and let $\Gamma$ be as defined in Definition \ref{def:hamming-union}. Then $\Aut{\left(\Gamma \right)} \leqslant \left(V \otimes W \right) \rtimes \left(\GL{\left(2, q \right)} \circ \GL{\left(m, q \right)}\right)$.
\end{proposition}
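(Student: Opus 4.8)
The plan is to deduce this proposition directly from Lemma~\ref{lm:automorphism-stabiliser-fixes-tensor-decomposition}, exploiting the fact that $\Gamma$ is a Cayley digraph. All of the genuine work has been done in the preceding lemmas; the proposition is essentially a packaging step that combines the bound on the point stabiliser with the standard decomposition of a transitive group containing a regular normal subgroup.

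First I would recall that, by Definition~\ref{def:hamming-union}, $\Gamma = \Cay(V \otimes W,\ \Delta)$ is a Cayley digraph on the additive group $V \otimes W$. Consequently the group $T$ of translations $x \mapsto x + t$, for $t \in V \otimes W$, is a subgroup of $\Aut(\Gamma)$ acting regularly on the vertex set; in particular $\Aut(\Gamma)$ is transitive. Since $T$ is transitive, for any $\theta \in \Aut(\Gamma)$ with $0^\theta = t$, composing $\theta$ with the translation by $-t$ yields an automorphism fixing $0$, so $\theta$ lies in the coset $\Aut(\Gamma)_0 T$. This gives the factorisation $\Aut(\Gamma) = \Aut(\Gamma)_0 \, T$, where $\Aut(\Gamma)_0$ denotes the stabiliser of the zero vector.

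Next I would invoke Lemma~\ref{lm:automorphism-stabiliser-fixes-tensor-decomposition}, whose hypotheses ($q > z$ and $q$ prime) match those of the proposition, to conclude that $\Aut(\Gamma)_0 \leqslant \GL(2, q) \circ \GL(m, q)$. Combining this with the factorisation above yields $\Aut(\Gamma) = \Aut(\Gamma)_0 \, T \leqslant (\GL(2, q) \circ \GL(m, q)) \, T$. Since $T = V \otimes W$ is precisely the translation subgroup of $\AGL(2m, q)$, it is normal in $\AGL(2m, q)$, and therefore this set product coincides with the subgroup $(V \otimes W) \rtimes (\GL(2, q) \circ \GL(m, q))$, which is exactly the claim.

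There is no serious obstacle in the proposition itself, as the hard analysis — that a stabiliser automorphism is linear and preserves the tensor decomposition — is already supplied by Lemmas~\ref{lm:automorphism-stabiliser-linear} and~\ref{lm:automorphism-stabiliser-fixes-tensor-decomposition}. The only points requiring a little care are the group-theoretic bookkeeping: confirming that the regularity of $T$ legitimately produces the factorisation $\Aut(\Gamma) = \Aut(\Gamma)_0 \, T$, and that the normality of the translation subgroup in $\AGL(2m, q)$ turns the resulting set product into the semidirect product named in the statement, with $\GL(2, q) \circ \GL(m, q)$ correctly identified as a subgroup of the point stabiliser $\GL(2m, q)$ of the affine group.
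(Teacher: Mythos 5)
Your proof is correct and takes essentially the same approach as the paper, which likewise deduces the proposition from Lemma~\ref{lm:automorphism-stabiliser-fixes-tensor-decomposition} together with the transitivity of the translation group of $V \otimes W$. You have simply spelled out the standard factorisation $\Aut{\left(\Gamma\right)} = \Aut{\left(\Gamma\right)}_0 T$ that the paper leaves implicit.
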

\begin{proof}
This follows from Lemma~\ref{lm:automorphism-stabiliser-fixes-tensor-decomposition} and the fact that the group of translations of $V \otimes W$ is transitive.
\end{proof}

\section{Specific cases of the permutation group $G{\left(m, q \right)}$} \label{sc:specific-values}

We examine first the specific case where $q = 5$, so let $G = G{\left(m, 5 \right)}$ and let $F = \GF{\left(5 \right)}$.
From Lemma~\ref{lm:suborbit-equivalence} we have $\Delta_1 = \Delta_4$ and $\Delta_2 = \Delta_3$, so $G$ has orbitals corresponding to the suborbits $\delta$, $\Delta_A$, $\Delta_B$, $\Delta_1$ and $\Delta_2$. Thus $G$ has rank $5$.

\begin{theorem} \label{thm:q=5}
The group $G{\left(m, 5 \right)}$ is not the automorphism group of any digraph for any $m \geqslant 2$.
\end{theorem}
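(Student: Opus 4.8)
The plan is to show that \emph{every} digraph admitting $G = G{\left(m, 5 \right)}$ as a group of automorphisms has a strictly larger automorphism group. Since $G$ is transitive and $-I \in D_8 \leqslant G_0$, each nontrivial suborbit is closed under negation, so every nontrivial orbital is self-paired and each $G$-invariant digraph is in fact a graph whose edge set is a union of the four orbitals $\Delta_A$, $\Delta_B$, $\Delta_1$, $\Delta_2$. As any digraph $\Gamma$ with $\Aut{\left(\Gamma \right)} = G$ is necessarily $G$-invariant, it therefore suffices to prove that for each of the $16$ subsets $\mathcal{S} \subseteq \left\{A, B, 1, 2 \right\}$, the Cayley graph $\Cay{\left(V \otimes W,\ \bigcup_{X \in \mathcal{S}} \Delta_X \right)}$ has an automorphism lying outside $G$.

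The key is to produce extra linear automorphisms from the action of $\PGL{\left(2, 5 \right)}$ on the directions of $V$. Identifying the projective line $\PG{\left(1, 5 \right)}$ with $\GF{\left(5 \right)} \cup \left\{\infty \right\}$ via $P_{\left(\mu \right)} = \left<e_1 + \mu e_2 \right>$, the group $D_8$ acts through $\mu \mapsto -\mu$ and $\mu \mapsto \mu^{-1}$, with orbits $\left\{0, \infty \right\}$, $\left\{1, 4 \right\}$ and $\left\{2, 3 \right\}$; these correspond respectively to the simple-tensor suborbits $\Delta_A$, $\Delta_1$ and $\Delta_2$, while $\Delta_B$ (the non-simple tensors) is fixed by all of $\GL{\left(2, 5 \right)} \circ \GL{\left(m, 5 \right)}$. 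I would then exhibit elements of $\GL{\left(2, 5 \right)}$, extended by the identity on $W$ so as to lie in $\GL{\left(2, 5 \right)} \circ \GL{\left(m, 5 \right)}$, whose induced projective maps permute these three pairs: for instance the transformations acting on the coordinate as $\mu \mapsto \frac{4\mu + 1}{\mu + 1}$ and $\mu \mapsto \frac{4\mu + 2}{\mu + 2}$ realise the transposition $\left(\left\{0,\infty \right\}\ \left\{1,4 \right\}\right)$ and the $3$-cycle $\left(\left\{0,\infty \right\}\ \left\{1,4 \right\}\ \left\{2,3 \right\}\right)$, and so generate the full symmetric group $S_3$ on the three pairs. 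Each such map permutes $\left\{\Delta_A, \Delta_1, \Delta_2 \right\}$ accordingly and fixes $\Delta_B$; since $D_8$ fixes each pair setwise, any map inducing a nontrivial permutation of the pairs induces a nontrivial permutation of the suborbits and hence lies outside $G_0 = D_8 \circ \GL{\left(m, 5 \right)}$. Being linear, and so fixing $0$, these maps lie outside $G$ itself.

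To finish uniformly, let $\sigma$ range over the copy of $S_3$ acting on $\left\{\Delta_A, \Delta_1, \Delta_2 \right\}$ just constructed. For any $\mathcal{S}$, the set $\mathcal{S} \cap \left\{A, 1, 2 \right\}$ is a subset of a $3$-element set, and every such subset (the empty set, a singleton, a $2$-subset, or the whole set) is stabilised setwise by some nonidentity $\sigma \in S_3$. Choosing such a $\sigma$, the corresponding linear map fixes $\Delta_B$ and stabilises $\mathcal{S}$, hence preserves the edge set $\bigcup_{X \in \mathcal{S}} \Delta_X$; by Lemma~\ref{lm:endomorphism-automorphism} it is an automorphism of the graph, and by the previous paragraph it does not lie in $G$. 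Thus the automorphism group of this graph strictly contains $G$ for every $\mathcal{S}$, and so $G{\left(m, 5 \right)}$ is the automorphism group of no digraph. I expect the main obstacle to be the explicit verification that the stabiliser of the pair-partition in $\PGL{\left(2, 5 \right)}$ surjects onto $S_3$ — that is, producing the projective maps above and checking that they permute the three $D_8$-orbits as claimed — after which the elementary fact that every subset of a $3$-set has nontrivial setwise stabiliser in $S_3$ makes the conclusion immediate and independent of $m$.
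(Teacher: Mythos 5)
Your proof is correct, and the underlying mechanism is the one the paper uses in its hardest cases: exhibit an element of $\GL{\left(2,5\right)} \circ \GL{\left(m,5\right)}$ that preserves the relevant connection set but permutes the suborbits nontrivially, hence lies outside $G_0 = D_8 \circ \GL{\left(m,5\right)}$. The organization, however, is genuinely different and more uniform. The paper works case by case: $\Gamma_A$, $\Gamma_1$ and $\Gamma_2$ are each identified as Hamming graphs $H{\left(2, 5^m\right)}$ via Lemmas \ref{lm:Gamma_A,B} and \ref{lm:Gamma_1,i}, $\Gamma_B$ is handled by invariance of the non-simple tensors, the three pairwise unions $\Delta_A \cup \Delta_1$, $\Delta_A \cup \Delta_2$, $\Delta_1 \cup \Delta_2$ get ad hoc matrices, and everything else is dispatched by complementation. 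You instead observe that the three simple-tensor suborbits correspond to the three $D_8$-orbits $\left\{0, \infty\right\}$, $\left\{1, 4\right\}$, $\left\{2, 3\right\}$ on $\PG{\left(1, 5\right)}$, produce elements of $\PGL{\left(2, 5\right)}$ inducing the full $\Sym{\left(3\right)}$ on these pairs, and then use the elementary fact that every subset of a $3$-set has a nonidentity setwise stabiliser in $\Sym{\left(3\right)}$ to cover all $16$ unions in one stroke, with no need for the Hamming-graph lemma or the complement trick. The verification you flag as the main obstacle does go through: $\mu \mapsto \frac{4\mu+1}{\mu+1}$ acts as $\left(0\ 1\right)\left(4\ \infty\right)\left(2\ 3\right)$ and $\mu \mapsto \frac{4\mu+2}{\mu+2}$ as $\left(0\ 1\ 2\right)\left(3\ \infty\ 4\right)$, giving a transposition and a $3$-cycle on the pairs (the first is in fact induced by the paper's own matrix $\begin{bsmallmatrix} 1 & 1 \\ 1 & -1 \end{bsmallmatrix}$). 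What your route buys is uniformity and a conceptual explanation of why $q = 5$ works; what it forgoes is the structural byproduct the paper reuses elsewhere, namely that $\Gamma_A$, $\Gamma_1$ and $\Gamma_2$ are Hamming graphs, which feeds into Proposition \ref{prop:primitive}.
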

\begin{proof}
Let $G = G{\left(m, 5 \right)}$. It suffices to show that $G$ is not the automorphism group of any union of its four non-trivial orbital digraphs. We know already from Lemma~\ref{lm:Gamma_A,B} that $G$ is not the automorphism group of $\Gamma_A$ or $\Gamma_B$. Since $2^2 = 4 = -1$ in the field $F$, we have from Lemma~\ref{lm:Gamma_1,i} that $G$ is not the automorphism group of $\Gamma_1$ or $\Gamma_2$ either.

Consider the following orbital digraph union:
\begin{align*}
\Delta_A \cup \Delta_1 &= \left\{e_1 \otimes w,\ e_2 \otimes w,\ \left(e_1 \pm e_2 \right) \otimes w \mid w \in W \setminus \left\{0 \right\}\right\} \\
&= \left\{\left(1, 0 \right) \otimes w,\ \left(0, 1 \right) \otimes w,\ \left(1, 1 \right) \otimes w,\ \left(1, -1 \right) \otimes w \mid w \in W \setminus \left\{0 \right\}\right\}
\end{align*}
with respect to the basis $\left\{e_1, e_2 \right\}$ of $V$. Consider the image of this union under the linear transformation $\theta = \begin{bsmallmatrix} 1 & 1 \\ 1 & -1 \end{bsmallmatrix} \circ I$, where $I$ denotes the $m \times m$ identity matrix. If we take, for example, a vector of the form $x = \left(1, 1 \right) \otimes w$ for some $w \in W \setminus \left\{0 \right\}$, then $x \in \Delta_A \cup \Delta_1$ and its image under $\theta$ is given by
\begin{displaymath}
x^\theta = \left(\left(1, 1 \right) \begin{bmatrix} 1 & 1 \\ 1 & -1 \end{bmatrix} \right) \otimes \left(wI \right) = \left(2, 0 \right) \otimes w = \left(1, 0 \right) \otimes \left(2w \right) \in \Delta_A \cup \Delta_1.
\end{displaymath}
It is easy to check using a similar calculation that if $x$ is a vector of the form $\left(1, 0 \right) \otimes w$ or $\left(0, 1 \right) \otimes w$ or $\left(1, -1 \right) \otimes w$, then the image of $x$ under $\theta$ lies in $\Delta_A \cup \Delta_1$.

We have now seen that $\theta$ preserves the set $\Delta_A \cup \Delta_1$, which means $\theta$ is an automorphism of $\Gamma_A \cup \Gamma_1$ by Lemma~\ref{lm:endomorphism-automorphism}. Since $\theta$ fixes the zero vector but does not lie in $G_0 = D_8 \circ \GL{\left(m, 5 \right)}$, it cannot be an element of $G$. Therefore $G$ is not the full automorphism group of $\Gamma_A \cup \Gamma_1$.

Now observe that
\begin{align*}
\Delta_A \cup \Delta_2 &= \left\{e_1 \otimes w,\ e_2 \otimes w,\ \left(e_1 \pm 2 e_2 \right) \otimes w \mid w \in W \setminus \left\{0 \right\}\right\} \\
&= {\left\{\left(1, 0 \right) \otimes w,\ \left(0, 1 \right) \otimes w,\ \left(1, 2 \right) \otimes w,\ \left(1, -2 \right) \otimes w \mid w \in W \setminus \left\{0 \right\}\right\}}.
\end{align*}
It can be checked that the linear transformation $\begin{bsmallmatrix} 1 & 2 \\ 2 & 1 \end{bsmallmatrix} \circ I$ preserves the set $\Delta_A \cup \Delta_2$, hence it is an automorphism of $\Gamma_A \cup \Gamma_2$ by Lemma~\ref{lm:endomorphism-automorphism}. Once again, this is an automorphism that does not lie in $G$, so $G$ is not the full automorphism group of $\Gamma_A \cup \Gamma_2$.

Finally, we have
\begin{align*}
\Delta_1 \cup \Delta_2 &= \left\{\left(e_1 \pm e_2 \right) \otimes w,\ \left(e_1 \pm 2 e_2 \right) \otimes w \mid w \in W \setminus \left\{0 \right\}\right\} \\
&= {\left\{\left(1, 1 \right) \otimes w,\ \left(1, -1 \right) \otimes w,\ \left(1, 2 \right) \otimes w,\ \left(1, -2 \right) \otimes w \mid w \in W \setminus \left\{0 \right\}\right\}}.
\end{align*}
It can be checked that the linear transformation $\begin{bsmallmatrix} 1 & 0 \\ 0 & 2 \end{bsmallmatrix} \circ I$ fixes the set $\Delta_1 \cup \Delta_2$, hence it is an automorphism of $\Gamma_1 \cup \Gamma_2$ by Lemma~\ref{lm:endomorphism-automorphism}. This automorphism does not lie in $G$, so $G$ is not the full automorphism group of $\Gamma_1 \cup \Gamma_2$.

The remaining orbital digraph unions are $\Delta_A \cup \Delta_B$, $\Delta_B \cup \Delta_1$ and $\Delta_B \cup \Delta_2$, as well as each union of three non-trivial orbital digraphs. However, these are the complements of orbital digraph unions that we have already checked, and every digraph has the same automorphism group as its complement. Hence we conclude that the group $G{\left(m, 5 \right)}$ is not the automorphism group of any digraph, as it is not the automorphism group of any union of its non-trivial orbital digraphs.
\end{proof}

Next, take $q=7$, let $m \geqslant 2$ be an integer, and let $G = G{\left(m, 7 \right)}$. As we are now working in the field $F = \GF {\left(7 \right)}$, Lemma~\ref{lm:suborbit-equivalence} gives us $\Delta_2 = \Delta_3 = \Delta_4 = \Delta_5$ and $\Delta_1 = \Delta_6$. Therefore the orbitals of $G$ are those corresponding to the suborbits $\delta$, $\Delta_A$, $\Delta_B$, $\Delta_1$ and $\Delta_2$, so $G$ once again has rank $5$.

\begin{theorem} \label{thm:q=7}
The group $G{\left(m, 7 \right)}$ is not the automorphism group of any digraph for any $m \geqslant 2$.
\end{theorem}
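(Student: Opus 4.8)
The plan is to follow the template of Theorem~\ref{thm:q=5}. If $G = \Aut(\Gamma)$ for some digraph $\Gamma$, then the arc set of $\Gamma$ is $G$-invariant and hence a union of orbitals, so it suffices to show that $G$ is a proper subgroup of the automorphism group of every union of the four non-trivial orbital digraphs $\Gamma_A,\Gamma_B,\Gamma_1,\Gamma_2$. Since a digraph and its complement have the same automorphism group, the fourteen proper non-empty unions split into seven complementary pairs, so I need only treat one representative from each: the four singletons $\Gamma_A,\Gamma_B,\Gamma_1,\Gamma_2$ and the three unions $\Gamma_A\cup\Gamma_1$, $\Gamma_A\cup\Gamma_2$ and $\Gamma_1\cup\Gamma_2$. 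Lemma~\ref{lm:Gamma_A,B} handles $\Gamma_A$ and $\Gamma_B$, and Lemma~\ref{lm:Gamma_1,i} handles $\Gamma_1$. The key difference from the case $q=5$ is that $-1$ is a non-square in $\GF(7)$, so Lemma~\ref{lm:Gamma_1,i} no longer covers $\Gamma_2$; producing an extra automorphism of $\Gamma_2$ is the crux.

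For each remaining union the arc set has the form $\Delta=\{(e_1+c e_2)\otimes w : c\in C,\ w\in W\setminus\{0\}\}$, where $C$ is a set of directions and $e_1+\infty e_2:=e_2$. By Lemma~\ref{lm:endomorphism-automorphism}, a map $M\circ I\in\GL(2,7)\circ\GL(m,7)$ is an automorphism of $\Cay(V\otimes W,\Delta)$ exactly when $M$ stabilises the set of projective points $\{P_{(c)}:c\in C\}$, since the first tensor factor may be rescaled freely into $w$. Moreover $M\circ I\in G_0=D_8\circ\GL(m,7)$ if and only if $M\in\GF(7)^\ast D_8$, equivalently the image of $M$ in $\PGL(2,7)$ lies in the image of $D_8$, which one checks acts on $\PG(1,7)$ as a Klein four-group. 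Thus for each union it suffices to find an $M\in\PGL(2,7)$ stabilising $C$ but inducing on $\{P_{(c)}:c\in C\}$ a permutation that the image of $D_8$ does not realise. For $\Gamma_1\cup\Gamma_2$, $\Gamma_A\cup\Gamma_1$ and $\Gamma_A\cup\Gamma_2$ the direction sets are $\GF(7)^\ast$, $\{0,\infty,1,-1\}$ and $\{0,\infty,2,3,4,5\}$; suitable maps are $\mathrm{diag}(1,3)$, $\begin{bsmallmatrix}1&1\\1&-1\end{bsmallmatrix}$ and $\begin{bsmallmatrix}5&3\\3&5\end{bsmallmatrix}$ respectively, each of which stabilises its direction set (the last by fixing the complementary pair $\{P_{(1)},P_{(-1)}\}$) while inducing a permutation the image of $D_8$ avoids, so none lies in $G_0$.

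The main obstacle is $\Gamma_2$, with directions $C=\{2,3,4,5\}$. The right tool is the cross-ratio: I would compute
\[
\mathcal{R}(P_{(2)},P_{(3)};P_{(4)},P_{(5)})=\frac{(4-2)(5-3)}{(4-3)(5-2)}=\frac{4}{3}=-1
\]
in $\GF(7)$, so the four points are harmonic. By Lemma~\ref{lm:permute-cross-ratio} the transposition $(P_{(2)}\,P_{(3)})$ sends this cross-ratio to $1/r=1/(-1)=-1$ and hence preserves it, so it is realised by an element of $\PGL(2,7)$ stabilising $C$ — explicitly $\begin{bsmallmatrix}2&5\\5&5\end{bsmallmatrix}$. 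As the image of $D_8$ induces only even permutations of $C$ (the identity together with double transpositions), the odd permutation $(P_{(2)}\,P_{(3)})$ is not among them, so the lift of $\begin{bsmallmatrix}2&5\\5&5\end{bsmallmatrix}$ to $\GL(2,7)$ is a linear automorphism of $\Gamma_2$ fixing $0$ but not in $G_0$; equivalently, the setwise stabiliser of $C$ in $\PGL(2,7)$ has order $8$, properly containing the image of $D_8$. This disposes of the final representative, and the complementary-pair reduction then completes the proof that $G(m,7)$ is not the automorphism group of any digraph.
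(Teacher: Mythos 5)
Your proposal is correct and takes essentially the same approach as the paper: reduce to one representative of each complementary pair of orbital digraph unions, dispose of $\Gamma_A$, $\Gamma_B$, $\Gamma_1$ via Lemmas~\ref{lm:Gamma_A,B} and~\ref{lm:Gamma_1,i}, and exhibit an explicit element of $\GL(2,7)\circ\GL(m,7)$ outside $G_0$ preserving the connection set of each remaining union (your cross-ratio justification for why $\Gamma_2$ admits such an element is extra motivation that anticipates Section~5, but the proof still rests on the explicit matrix). I have checked that all of your witness matrices work; in particular your $\begin{bsmallmatrix}2&5\\5&5\end{bsmallmatrix}$ for $\Gamma_2$ does stabilise the direction set $\{2,3,4,5\}$, whereas the matrix $\begin{bsmallmatrix}1&2\\2&1\end{bsmallmatrix}$ printed in the paper for $\Gamma_2$ alone sends $e_1+3e_2$ to $5e_2$, which lies in $\Delta_A$ rather than $\Delta_2$, so your choice actually repairs that step.
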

\begin{proof}
The orbital digraphs $\Gamma_A$, $\Gamma_B$ and $\Gamma_1$ do not have $G$ as their automorphism group according to Lemmas \ref{lm:Gamma_A,B} and \ref{lm:Gamma_1,i}.

The suborbit $\Delta_2$ as defined in Section \ref{sc:suborbits} is given by
\begin{align*}
\Delta_2 &= \left\{\left(e_1 \pm 2 e_2 \right) \otimes w,\ \left(e_1 \pm 2^{-1} e_2 \right) \otimes w \mid w \in W \setminus \left\{0 \right\}\right\} \\
&= \left\{\left(1, 2 \right) \otimes w,\ \left(1, 3 \right) \otimes w,\ \left(1, 4 \right) \otimes w,\ \left(1, 5 \right) \otimes w \mid w \in W \setminus \left\{0 \right\}\right\}.
\end{align*}
It can be checked that this set is preserved by the linear transformation $\begin{bsmallmatrix} 1 & 2 \\ 2 & 1 \end{bsmallmatrix} \circ I$. Therefore by Lemma~\ref{lm:endomorphism-automorphism} this transformation is an automorphism of $\Gamma_2$, and since it does not lie in $G$ we can conclude that $\Aut(\Gamma_2) \neq G$.

Similarly, observe that
\begin{displaymath}
\begin{array}{l@{}l@{}l}
\Delta_A \cup \Delta_1 &\ = \big\{&e_1 \otimes w,\ e_2 \otimes w,\ \left(e_1 \pm e_2 \right) \otimes w \mid w \in W \setminus {\left\{0 \right\}} \big\} \\
&\ = \big\{&\left(1, 0 \right) \otimes w,\ \left(0, 1 \right) \otimes w,\ \left(1, 1 \right) \otimes w,\ \left(1, 6 \right) \otimes w \mid w \in W \setminus {\left\{0 \right\}} \big\}, \\
\Delta_A \cup \Delta_2 &\ = \big\{&e_1 \otimes w,\ e_2 \otimes w,\ \left(e_1 \pm 2 e_2 \right) \otimes w,\ \left(e_1 \pm 2^{-1} e_2 \right) \otimes w \mid w \in W \setminus {\left\{0 \right\}} \big\} \\
&\ = \big\{&\left(1, 0 \right) \otimes w,\ \left(0, 1 \right) \otimes w,\ \left(1, 2 \right) \otimes w,\ \left(1, 3 \right) \otimes w,\ \left(1, 4 \right) \otimes w, \\
&&\left(1, 5 \right) \otimes w \mid w \in W \setminus {\left\{0 \right\}} \big\}, \\
\Delta_1 \cup \Delta_2 &\ = \big\{&\left(e_1 \pm e_2 \right) \otimes w,\ \left(e_1 \pm 2 e_2 \right) \otimes w,\ \left(e_1 \pm 2^{-1} e_2 \right) \otimes w \mid w \in W \setminus {\left\{0 \right\}} \big\} \\
&\ = \big\{&\left(1, \mu \right) \otimes w \mid \mu \in F \setminus {\left\{0 \right\}},\ w \in W \setminus {\left\{0 \right\}} \big\}.
\end{array}
\end{displaymath}
It can be checked that these sets are fixed by the linear transformations $\begin{bsmallmatrix} 1 & 1 \\ 1 & -1 \end{bsmallmatrix} \circ I$, $\begin{bsmallmatrix} 1 & 2 \\ 2 & 1 \end{bsmallmatrix} \circ I$ and $\begin{bsmallmatrix} 1 & 0 \\ 0 & 2 \end{bsmallmatrix} \circ I$, respectively. As these transformations do not lie in $G$, it follows from Lemma~\ref{lm:endomorphism-automorphism} that $G$ is not the automorphism group of $\Gamma_A \cup \Gamma_1$, nor $\Gamma_A \cup \Gamma_2$, nor $\Gamma_1 \cup \Gamma_2$.

Each of the remaining unions of non-trivial orbital digraphs is the complement of one of the unions we have already checked. Hence $G{\left(m, 7 \right)}$ is not the automorphism group of any digraph since it is not the full automorphism group of any union of its non-trivial orbital digraphs.
\end{proof}

We now consider the case where $q = 13$, so let $G = G{\left(m, 13 \right)}$ and observe from Lemma~\ref{lm:suborbit-equivalence} that
\begin{align*}
&\Delta_1 = \Delta_{12}, \\
&\Delta_2 = \Delta_6 = \Delta_7 = \Delta_{11}, \\
&\Delta_3 = \Delta_4 = \Delta_9 = \Delta_{10}, \\
&\Delta_5 = \Delta_8.
\end{align*}
Therefore the set
\begin{displaymath}
\mathcal{O} = \left\{\delta,\ \Delta_A,\ \Delta_B,\ \Delta_1,\ \Delta_2,\ \Delta_3,\ \Delta_5 \right\}
\end{displaymath}
is a complete set of distinct suborbits of $G$, so the rank of $G$ is $7$. The six non-trivial orbital digraphs are $\Gamma_A$, $\Gamma_B$, $\Gamma_1$, $\Gamma_2$, $\Gamma_3$ and $\Gamma_5$.

\begin{theorem} \label{thm:q=13}
The group $G{\left(m, 13 \right)}$ is not the automorphism group of any digraph for any $m \geqslant 2$.
\end{theorem}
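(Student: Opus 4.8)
The plan is to follow the pattern of Theorems~\ref{thm:q=5} and~\ref{thm:q=7}: I would show that every union of the six nontrivial orbital digraphs admits an automorphism outside $G$, so that $G$ is never the full automorphism group. Since $V \otimes W$ is a regular normal subgroup, it suffices to produce, for each such union, a linear map fixing $0$ that preserves the union but does not lie in $G_0 = D_8 \circ \GL{\left(m, 13 \right)}$; by Lemma~\ref{lm:endomorphism-automorphism} any linear map preserving the arc-set is an automorphism, and as every digraph shares its automorphism group with its complement I need only treat unions up to complementation. The orbitals $\Gamma_A$, $\Gamma_1$ and $\Gamma_5$ are Hamming graphs --- for $\Gamma_5$ because $5^2 = -1$ in $\GF{\left(13 \right)}$, so Lemma~\ref{lm:Gamma_1,i} applies --- while $\Gamma_B$ is preserved by all of $\GL{\left(2, 13 \right)} \circ \GL{\left(m, 13 \right)}$ by Lemma~\ref{lm:Gamma_A,B}; these provide the easy building blocks.

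The key is to reformulate everything on the projective line $\PG{\left(1, 13 \right)}$. Writing each simple tensor as $\left(e_1 + \mu e_2 \right) \otimes w$, a map $A \circ I$ with $A \in \GL{\left(2, 13 \right)}$ moves the ``slope'' $\mu$ according to $\bar A \in \PGL{\left(2, 13 \right)}$ and leaves $W$ untouched, so $A \circ I$ preserves a union of simple-tensor orbitals precisely when $\bar A$ stabilises the corresponding set of slopes; moreover $A \circ I \in G_0$ exactly when $\bar A$ lies in the image $\overline{D_8}$ of $D_8$, which is a Klein four-group (as $-I \in D_8$), and $\Gamma_B$ is insensitive to all of this. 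Thus it suffices to prove that for every union of slope-orbits the setwise stabiliser in $\PGL{\left(2, 13 \right)}$ is strictly larger than $\overline{D_8}$. By Lemma~\ref{lm:suborbit-equivalence} the six orbitals correspond to the $\overline{D_8}$-orbits on $\PG{\left(1, 13 \right)}$, namely the three pairs $\left\{0, \infty \right\}$ $\left(\Gamma_A \right)$, $\left\{1, -1 \right\}$ $\left(\Gamma_1 \right)$, $\left\{5, -5 \right\}$ $\left(\Gamma_5 \right)$ and the two quadruples $\left\{\pm 2, \pm 2^{-1} \right\}$ $\left(\Gamma_2 \right)$ and $\left\{\pm 3, \pm 3^{-1} \right\}$ $\left(\Gamma_3 \right)$.

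The three pairs form an ``octahedron'' on $\PG{\left(1, 13 \right)}$ whose setwise stabiliser is $N = N_{\PGL{\left(2, 13 \right)}}{\left(\overline{D_8} \right)} \cong S_4$, permuting the pairs as $S_3$ and permuting the two quadruples by the sign character. Computing cross-ratios via Lemma~\ref{lm:permute-cross-ratio} and Table~\ref{tbl:permuted-cross-ratios}, I would check that each quadruple is \emph{equianharmonic} --- its cross-ratio satisfies $t^2 - t + 1 = 0$, which is solvable in $\GF{\left(13 \right)}$ since $13 \equiv 1 \pmod 3$ --- so the six cross-ratio values collapse to two and each quadruple's full stabiliser is an $A_4 \leqslant N$ containing order-$3$ elements beyond $\overline{D_8}$. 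Consequently $N$ already furnishes an element outside $\overline{D_8}$ for every union except those consisting of a single quadruple together with exactly one or two of the pairs (and their complements), where the stabiliser within $N$ collapses precisely to $\overline{D_8}$.

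These exceptional unions, such as $\Gamma_1 \cup \Gamma_2$, are the main obstacle, since there neither the octahedral group $N$ nor the $A_4$ stabilising the quadruple preserves the configuration. To resolve them I would look outside $N$: the relevant six-point slope-set --- for $\Gamma_1 \cup \Gamma_2$ this is $\left\{1, 2, 6, 7, 11, 12 \right\}$ --- contains three distinct equianharmonic quadruples whose complementary pairs partition the six points, so any stabilising collineation permutes those three pairs, giving a map to $S_3$. An explicit order-$3$ Möbius transformation realising a $3$-cycle of the pairs --- for instance the one inducing $\left(1\ 2\ 7 \right)\left(6\ 11\ 12 \right)$, which one verifies preserves the set and lies outside $\overline{D_8}$ --- then yields the required extra automorphism. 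Because $N$ acts transitively on the six such six-point configurations, this single computation together with complementation covers every remaining union. I expect the crux to be establishing the equianharmonic coincidence and producing this order-$3$ collineation outside $N$, as this is exactly where the special arithmetic of $\GF{\left(13 \right)}$, and hence the membership of $13$ among the four exceptional primes, enters.
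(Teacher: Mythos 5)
Your argument is correct, but it takes a genuinely different route from the paper's. The paper's proof of Theorem~\ref{thm:q=13} is a direct verification: it tabulates fifteen explicit matrices $A \circ I$ (Table~\ref{tbl:q=13,endomorphisms}), one for each nonempty union of $\Gamma_1, \Gamma_2, \Gamma_3, \Gamma_5$, checks that each preserves the corresponding union of suborbits while lying outside $G$, absorbs $\Gamma_B$ into any such union using the fact that $\GL{\left(2,13\right)} \circ \GL{\left(m,13\right)}$ preserves the non-simple tensors, and disposes of unions containing $\Gamma_A$ by complementation. You instead work projectively: you reduce everything to the action of $\PGL{\left(2,13\right)}$ on slope-sets in $\PG{\left(1,13\right)}$, identify the $\overline{D_8}$-orbits as three pairs and two quadruples, and use the normaliser $N \cong S_4$ of $\overline{D_8}$ together with the equianharmonic property of the quadruples' cross-ratios (which is exactly Case~4 of the paper's Section~5 specialised to $p=13$) to handle all unions uniformly except those whose simple-tensor part is one quadruple plus one or two pairs; those need one explicit order-$3$ collineation outside $N$ plus transitivity of $N$ on the six exceptional six-point configurations, with complementation covering the eight-point ones. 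I checked the load-bearing facts: both quadruples are equianharmonic (their cross-ratios are $4$ and $10$, the roots of $t^2 - t + 1$ over $\GF{\left(13\right)}$); the exceptional unions are exactly as you describe; and the map $x \mapsto \frac{10x+12}{x+10}$ has order $3$, induces $\left(1\ 2\ 7\right)\left(6\ 11\ 12\right)$, and hence preserves $\left\{1,2,6,7,11,12\right\}$ while lying outside $\overline{D_8}$. Your approach trades the paper's opaque but easily checked table for a structural explanation of why $13$ lies among the exceptional primes, at the cost of group-theoretic overhead; if you write it up, make explicit the small argument that no odd element of $N$ fixes a quadruple (otherwise the quadruple's setwise stabiliser would induce more than $A_4$, forcing $r = 1/r$ by Table~\ref{tbl:permuted-cross-ratios}, which fails for $r \in \left\{4, 10\right\}$), since that is what pins down the exceptional list.
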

\begin{proof}
First we examine the non-trivial orbital digraph unions that do not include $\Gamma_A$ or $\Gamma_B$. It can be checked that for each union of the orbital digraphs $\Gamma_1$, $\Gamma_2$, $\Gamma_3$ and $\Gamma_5$, the invertible linear transformation given in Table~\ref{tbl:q=13,endomorphisms} preserves the corresponding union of suborbits but does not lie in $G$. Each is an automorphism of the respective orbital digraph union by Lemma~\ref{lm:endomorphism-automorphism}.

\begin{table}
\centering
\begin{tabular}{cc||cc}
\hline
Orbital digraph union & Automorphism & Orbital digraph union & Automorphism \\
\hline
$\Gamma_1$ & $\begin{bmatrix} 1 & 2 \\ 2 & 1 \end{bmatrix} \circ I$ & $\Gamma_2 \cup \Gamma_5$ & $\begin{bmatrix} 1 & 0 \\ 0 & 4 \end{bmatrix} \circ I$ \\
$\Gamma_2$ & $\begin{bmatrix} 1 & 1 \\ 5 & -5 \end{bmatrix} \circ I$ & $\Gamma_3 \cup \Gamma_5$ & $\begin{bmatrix} 1 & 2 \\ 2 & 1 \end{bmatrix} \circ I$ \\
$\Gamma_3$ & $\begin{bmatrix} 1 & 1 \\ 5 & -5 \end{bmatrix} \circ I$ & $\Gamma_1 \cup \Gamma_2 \cup \Gamma_3$ & $\begin{bmatrix} 1 & 0 \\ 0 & 2 \end{bmatrix} \circ I$ \\
$\Gamma_5$ & $\begin{bmatrix} 1 & 1 \\ 1 & -1 \end{bmatrix} \circ I$ & $\Gamma_1 \cup \Gamma_2 \cup \Gamma_5$ & $\begin{bmatrix} 1 & 4 \\ 4 & -1 \end{bmatrix} \circ I$ \\
$\Gamma_1 \cup \Gamma_2$ & $\begin{bmatrix} 1 & 4 \\ 4 & -1 \end{bmatrix} \circ I$ & $\Gamma_1 \cup \Gamma_3 \cup \Gamma_5$ & $\begin{bmatrix} 1 & 2 \\ 2 & 1 \end{bmatrix} \circ I$ \\
$\Gamma_1 \cup \Gamma_3$ & $\begin{bmatrix} 1 & 0 \\ 0 & 4 \end{bmatrix} \circ I$ & $\Gamma_2 \cup \Gamma_3 \cup \Gamma_5$ & $\begin{bmatrix} 1 & 1 \\ 1 & -1 \end{bmatrix} \circ I$ \\
$\Gamma_1 \cup \Gamma_5$ & $\begin{bmatrix} 1 & 0 \\ 0 & 5 \end{bmatrix} \circ I$ & $\Gamma_1 \cup \Gamma_2 \cup \Gamma_3 \cup \Gamma_5$ & $\begin{bmatrix} 1 & 0 \\ 0 & 2 \end{bmatrix} \circ I$ \\
$\Gamma_2 \cup \Gamma_3$ & $\begin{bmatrix} 1 & 1 \\ 5 & -5 \end{bmatrix} \circ I$ \\
\hline
\end{tabular}
\caption{A linear automorphism not in $G{\left(m, 13 \right)}$ for each of the given unions of orbital digraphs. The $m \times m$ identity matrix is denoted by $I$.}
\label{tbl:q=13,endomorphisms}
\end{table}

Suppose now that $\Gamma = \Gamma_B \cup \Gamma'$, where $\Gamma'$ is some union of $\Gamma_1$, $\Gamma_2$, $\Gamma_3$ and $\Gamma_5$. We have just seen that there exists some $\theta \in \GL{\left(2, q \right)} \circ \GL{\left(m, q \right)}$ that is an automorphism of $\Gamma'$ but that does not lie in $G$. But since $\theta$ preserves the set $\Delta_B$ of all non-simple tensors, it must also be an automorphism of $\Gamma_B$ by Lemma~\ref{lm:endomorphism-automorphism}. It follows that $\theta \in \Aut{\left(\Gamma \right)}$.

Finally, if a union of non-trivial orbital digraphs of $G$ includes $\Gamma_A$, then its complement does not include $\Gamma_A$ and has therefore already been checked.

Since $G$ is not the full automorphism group of any union of its non-trivial orbital digraphs, we conclude that $G$ is not the automorphism group of any digraph.
\end{proof}

\begin{theorem} \label{thm:2-closed}
The groups $G{\left(m, 5 \right)}$, $G{\left(m, 7 \right)}$ and $G{\left(m, 13 \right)}$ are $2$-closed for all $m \geqslant 2$.
\end{theorem}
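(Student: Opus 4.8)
The plan is to prove $G$ is $2$-closed by showing $G^{(2)} = G$. Since $G$ contains the regular translation subgroup $T = V \otimes W$, the $2$-closure $G^{(2)} \geqslant G$ is transitive, and because $T$ is transitive we have the factorisation $G^{(2)} = T \cdot G^{(2)}_0$. As $G = T \cdot G_0$ likewise, it suffices to prove $G^{(2)}_0 = G_0$; the inclusion $G_0 \leqslant G^{(2)}_0$ is immediate from $G \leqslant G^{(2)}$ together with the fact that $G_0$ fixes $0$. So the whole problem reduces to the point stabiliser.

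The first substantive step is to bound $G^{(2)}_0$ from above using Proposition~\ref{prop:automorphism-group-affine-decomp}. By definition of the $2$-closure, $G^{(2)}$ preserves every orbital and hence lies in $\Aut{\left(\Gamma \right)}$ for every union $\Gamma$ of orbital digraphs. For each $q \in \left\{5, 7, 13 \right\}$ I would choose such a union having the shape of Definition~\ref{def:hamming-union} with $z = 4$: for $q = 5$ take $\Gamma = \Gamma_1 \cup \Gamma_2$, whose directions are $\left\{1, 2, 3, 4 \right\}$; for $q = 7$ take $\Gamma = \Gamma_2$, whose directions are $\left\{2, 3, 4, 5 \right\}$; and for $q = 13$ take $\Gamma = \Gamma_2$, whose directions are $\left\{2, 6, 7, 11 \right\}$. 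In each case the four directions are distinct and finite, $q$ is prime, and $q > z = 4$, so Proposition~\ref{prop:automorphism-group-affine-decomp} gives $G^{(2)} \leqslant \Aut{\left(\Gamma \right)} \leqslant T \rtimes \left(\GL{\left(2, q \right)} \circ \GL{\left(m, q \right)}\right)$, and therefore $G^{(2)}_0 \leqslant \GL{\left(2, q \right)} \circ \GL{\left(m, q \right)}$. The only case-specific care here is the bookkeeping: for $q = 5$ no single suborbit supplies four distinct directions, so the union $\Gamma_1 \cup \Gamma_2$ is needed, whereas for $q = 7, 13$ the single suborbit $\Delta_2$ already has a size-$4$ direction block.

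The second step identifies exactly which elements of $\GL{\left(2, q \right)} \circ \GL{\left(m, q \right)}$ preserve all suborbits. Such an element $A \circ B$ is linear, so by Lemma~\ref{lm:endomorphism-automorphism} it preserves a suborbit if and only if it preserves the corresponding connection set; and since $\left(A \circ B \right){\left(v \otimes w \right)} = v^A \otimes w^B$ with $B$ invertible, it fixes a simple-tensor suborbit with direction set $X \subseteq \PG{\left(1, q \right)}$ exactly when the image $\bar{A} \in \PGL{\left(2, q \right)}$ fixes $X$ setwise (the suborbit $\Delta_B$ of non-simple tensors is preserved by the whole group and imposes no condition). In particular, preserving $\Delta_A$ and $\Delta_1$ forces $\bar{A}$ to fix both $\left\{0, \infty \right\}$ and $\left\{1, -1 \right\}$, in the coordinates $P_{\left(\lambda \right)} = \langle e_1 + \lambda e_2 \rangle$, $P_{\left(\infty \right)} = \langle e_2 \rangle$. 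The key computation, uniform across the three primes, is that an element of $\PGL{\left(2, q \right)}$ fixing $\left\{0, \infty \right\}$ has the form $z \mapsto az$ or $z \mapsto a/z$, and fixing $\left\{1, -1 \right\}$ then forces $a \in \left\{1, -1 \right\}$. Thus $\bar{A}$ lies in $\left\{z,\ -z,\ 1/z,\ -1/z \right\}$, which is precisely the image of $D_8$ in $\PGL{\left(2, q \right)}$. Consequently $A = zd$ for some $d \in D_8$ and scalar $z$, so $A \circ B = d \circ \left(zB \right) \in D_8 \circ \GL{\left(m, q \right)} = G_0$. This gives $G^{(2)}_0 \leqslant G_0$, hence $G^{(2)}_0 = G_0$ and $G^{(2)} = T \cdot G^{(2)}_0 = T \cdot G_0 = G$, as required.

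The genuinely new input is the reduction to $\GL{\left(2, q \right)} \circ \GL{\left(m, q \right)}$ supplied by Proposition~\ref{prop:automorphism-group-affine-decomp}; once the stabiliser is confined to that group, everything collapses to the tame $\PGL{\left(2, q \right)}$ stabiliser computation, which is short and identical for all three values of $q$. I therefore expect no deep obstacle: the only point requiring attention is verifying that the chosen union $\Gamma$ genuinely satisfies the hypotheses of Proposition~\ref{prop:automorphism-group-affine-decomp} (four distinct finite directions forming a union of suborbits, with $q > z$), which is exactly why the argument singles out $\Gamma_1 \cup \Gamma_2$ for $q = 5$ and $\Gamma_2$ for $q = 7$ and $q = 13$.
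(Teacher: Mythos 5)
Your proposal is correct and follows essentially the same route as the paper: both reduce to the point stabiliser, apply Proposition~\ref{prop:automorphism-group-affine-decomp} to exactly the same digraphs ($\Gamma_1 \cup \Gamma_2$ for $q=5$ and $\Gamma_2$ for $q=7, 13$), and finish by pinning down the stabiliser in $\GL{\left(2, q \right)}$ of the direction sets of certain suborbits. The only divergence is in that final step: the paper uses prime-specific pairs of direction sets and verifies the stabiliser computationally, whereas you use $\Delta_A$ and $\Delta_1$ uniformly and compute by hand the $\PGL{\left(2, q \right)}$-stabiliser of $\left\{0, \infty \right\}$ and $\left\{1, -1 \right\}$ --- a slightly cleaner, computer-free finish that works identically for all three primes.
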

\begin{proof}
First let $G = G{\left(m, 5 \right)}$, let $\Delta = \Delta_1 \cup \Delta_2$, and let $\Gamma = \Gamma_1 \cup \Gamma_2 = \Cay{\left(V \otimes W,\ \Delta \right)}$. If we define $\mu_1 = 1$, $\mu_2 = 2$, $\mu_3 = 3$ and $\mu_4 = 4$, then we have
\begin{displaymath}
    \Delta = {\left\{\left(e_1 + \mu_i e_2 \right) \otimes w \mid i \in {\left\{1, 2, 3, 4 \right\}},\ w \in W \setminus \left\{0 \right\}\right\}},
\end{displaymath}
so Proposition~\ref{prop:automorphism-group-affine-decomp} asserts that $\Aut{\left(\Gamma \right)} \leqslant \left(V \otimes W \right) \rtimes \left(\GL{\left(2, 5 \right)} \circ \GL{\left(m, 5 \right)}\right)$. Therefore
\begin{displaymath}
    G^{\left(2 \right)} \leqslant \Aut{\left(\Gamma_1 \right)} \cap \Aut{\left(\Gamma_2 \right)} \leqslant \Aut{\left(\Gamma \right)} \leqslant \left(V \otimes W \right) \rtimes {\left(\GL{\left(2, 5 \right)} \circ \GL{\left(m, 5 \right)}\right)}.
\end{displaymath}

Let $A \circ B \in G^{\left(2 \right)}_0$. It suffices to show that $A \circ B \in G_0$. Since $A \circ B \in G^{\left(2 \right)} \leqslant \Aut{\left(\Gamma_1 \right)}$, we know from Lemma~\ref{lm:endomorphism-automorphism} that $A \circ B$ preserves the suborbit $\Delta_1$. It is easy then to see that $A$ must preserve the set
\begin{displaymath}
    V_1 = \left(\left<e_1 + e_2 \right> \cup \left<e_1 + 4 e_2 \right> \right) \setminus {\left\{0 \right\}}.
\end{displaymath}
Similarly, $A \circ B$ preserves the suborbit $\Delta_2$, so $A$ must preserve the set
\begin{displaymath}
    V_2 = \left(\left<e_1 + 2 e_2 \right> \cup \left<e_1 + 3 e_2 \right> \right) \setminus {\left\{0 \right\}}.
\end{displaymath}
It can be checked computationally that the stabilisers of $V_1$ and $V_2$ in $\GL{\left(2, 5 \right)}$ intersect only at $D_8$, so $A \circ B$ lies in $G_0$ as required.

Now let $G = G{\left(m, 7 \right)}$. If we define $\mu_1 = 2$, $\mu_2 = 3$, $\mu_3 = 4$ and $\mu_4 = 5$, then
\begin{displaymath}
    \Delta_2 = {\left\{\left(e_1 + \mu_i e_2 \right) \otimes w \mid i \in {\left\{1, 2, 3, 4 \right\}},\ w \in W \setminus \left\{0 \right\}\right\}},
\end{displaymath}
and hence
\begin{displaymath}
    G^{\left(2 \right)} \leqslant \Aut{\left(\Gamma_2 \right)} \leqslant \left(V \otimes W \right) \rtimes {\left(\GL{\left(2, 7 \right)} \circ \GL{\left(m, 7 \right)}\right)}
\end{displaymath}
by the same reasoning as above. Letting $A \circ B \in G^{\left(2 \right)}_0$, it suffices to show that $A \circ B \in G_0$. By reasoning similar to that above, $A$ must preserve the sets
\begin{displaymath}
    V_1 = \left(\left<e_1 \right> \cup \left<e_2 \right> \right) \setminus \left\{0 \right\}
\end{displaymath}
and
\begin{displaymath}
    V_A = \left(\left<e_1 + e_2 \right> \cup \left<e_1 + 4 e_2 \right> \right) \setminus {\left\{0 \right\}}.
\end{displaymath}
It can be checked computationally that the stabilisers of $V_A$ and $V_1$ in $\GL{\left(2, 7 \right)}$ intersect only at $D_8$, so $A \circ B$ lies in $G_0$ as required.

Finally, if $G = G{\left(m, 13 \right)}$ then define $\mu_1 = 2$, $\mu_2 = 6$, $\mu_3 = 7$ and $\mu_4 = 11$. Then
\begin{displaymath}
    \Delta_2 = {\left\{\left(e_1 + \mu_i e_2 \right) \otimes w \mid i \in {\left\{1, 2, 3, 4 \right\}},\ w \in W \setminus \left\{0 \right\}\right\}},
\end{displaymath}
so again we have
\begin{displaymath}
    G^{\left(2 \right)} \leqslant \Aut{\left(\Gamma_2 \right)} \leqslant \left(V \otimes W \right) \rtimes {\left(\GL{\left(2, 13 \right)} \circ \GL{\left(m, 13 \right)}\right)}.
\end{displaymath}
Using
\begin{align*}
    V_2 &= \left(\left<e_1 + 2 e_2 \right> \cup \left<e_1 + 6 e_2 \right> \cup \left<e_1 + 7 e_2 \right> \cup \left<e_1 + 11 e_2 \right> \right) \setminus {\left\{0 \right\}}, \\
    V_3 &= \left(\left<e_1 + 3 e_2 \right> \cup \left<e_1 + 4 e_2 \right> \cup \left<e_1 + 9 e_2 \right> \cup \left<e_1 + 10 e_2 \right> \right) \setminus {\left\{0 \right\}},
\end{align*}
the result follows using the same argument as the previous two cases.
\end{proof}

Consider now the group $G = G{\left(m, 17 \right)}$. The following result will be used later.

\begin{theorem} \label{thm:q=17}
The group $G{\left(m, 17 \right)}$ is the automorphism group of its orbital digraph union $\Gamma = \Gamma_1 \cup \Gamma_2$ for all $m \geqslant 2$.
\end{theorem}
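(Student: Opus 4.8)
The containment $G \leqslant \Aut(\Gamma)$ is immediate, since $\Gamma = \Gamma_1 \cup \Gamma_2$ is a union of orbital digraphs of $G$ and hence is preserved by $G$. So the whole task is the reverse containment $\Aut(\Gamma) \leqslant G$. The plan is to first show that $\Aut(\Gamma)$ respects the affine and tensor-product structure of $V \otimes W$, and then to pin down its point stabiliser exactly by means of a projective stabiliser computation.

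By Lemma~\ref{lm:suborbit-equivalence}, over $\GF(17)$ the suborbit $\Delta_1$ corresponds to $\mu \in \{1, 16\}$ and $\Delta_2$ to $\mu \in \{2, 8, 9, 15\}$, so with $M = \{1, 2, 8, 9, 15, 16\}$ the connection set is $\Delta := \Delta_1 \cup \Delta_2 = \{(e_1 + \mu e_2) \otimes w \mid \mu \in M,\ w \in W \setminus \{0\}\}$. This is precisely the set of Definition~\ref{def:hamming-union} with $z = |M| = 6$ and $\{\mu_1, \ldots, \mu_6\} = M$. As $q = 17$ is prime and $q > z = 6$, the analysis of that subsection applies: Lemma~\ref{lm:automorphism-stabiliser-fixes-tensor-decomposition} (in the case $z = 6$) shows $\Aut(\Gamma)_0 \leqslant \GL(2, 17) \circ \GL(m, 17)$, and, exactly as in Proposition~\ref{prop:automorphism-group-affine-decomp}, transitivity of the translations then gives $\Aut(\Gamma) \leqslant (V \otimes W) \rtimes (\GL(2, 17) \circ \GL(m, 17))$. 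In particular both $G$ and $\Aut(\Gamma)$ contain the translation group $V \otimes W$, so it remains only to prove $\Aut(\Gamma)_0 \leqslant G_0 = D_8 \circ \GL(m, 17)$.

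Let $A \circ B \in \Aut(\Gamma)_0$ with $A \in \GL(2, 17)$ and $B \in \GL(m, 17)$. By Lemma~\ref{lm:endomorphism-automorphism} it preserves $\Delta$; as $\Delta$ consists of the simple tensors $(e_1 + \mu e_2) \otimes w$ and $w^B$ runs over all of $W \setminus \{0\}$, the map $A$ must carry each $1$-space $\langle e_1 + \mu e_2 \rangle$ ($\mu \in M$) to another such $1$-space. Thus the image $\overline{A}$ of $A$ in $\PGL(2, 17)$ stabilises the six-point set $S = \{\langle e_1 + \mu e_2 \rangle \mid \mu \in M\} \subseteq \PG(1, 17)$. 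Since scalar matrices of the $\GL(2,17)$ factor are absorbed into the $\GL(m, 17)$ factor of the central product, we have $G_0 = \{A \circ B \mid \overline{A} \in \overline{D_8}\}$, where $\overline{D_8}$ denotes the image of $D_8$ in $\PGL(2, 17)$. Hence the desired inclusion will follow once we establish the purely projective claim $\mathrm{Stab}_{\PGL(2,17)}(S) \leqslant \overline{D_8}$, the reverse inclusion being already guaranteed by $G \leqslant \Aut(\Gamma)$.

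The main obstacle is this stabiliser computation. That $\overline{D_8} \leqslant \mathrm{Stab}(S)$ is a direct check: by Lemma~\ref{lm:V-basis-under-D8} the two generators of $D_8$ induce on $\PG(1,17)$ the maps $\mu \mapsto -\mu$ and $\mu \mapsto \mu^{-1}$, and both preserve $M$ (using $2^{-1} = 9$, $8^{-1} = 15$, and $-\mu \equiv 17 - \mu$); these commuting involutions generate the Klein four-group $\overline{D_8}$, which has order $4$ since $-I \in D_8$ lies in the kernel of $\GL(2,17) \to \PGL(2,17)$. For the reverse inclusion I would exploit the fact that $\PGL(2,17)$ acts sharply $3$-transitively, so an element of $\mathrm{Stab}(S)$ is determined by the permutation it induces on $S$, and such a permutation is realised projectively precisely when it preserves the cross-ratio of every $4$-subset of $S$. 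Tracking how these values transform under reorderings via Lemma~\ref{lm:permute-cross-ratio} (equivalently, a finite verification in $\PGL(2,17)$), one checks that only the four permutations coming from $\overline{D_8}$ arise, giving $\mathrm{Stab}(S) = \overline{D_8}$. This is exactly the point at which $17$ differs from the primes $3, 5, 7, 13$, whose analogous point sets admit extra projective symmetries, and I expect it to be the hardest and most computational step. Once it is in hand, $\overline{A} \in \overline{D_8}$ forces $A \circ B \in G_0$, which yields $\Aut(\Gamma)_0 \leqslant G_0$ and therefore $\Aut(\Gamma) = G$.
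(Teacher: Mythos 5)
Your proposal is correct and follows essentially the same route as the paper: identify $\Delta_1 \cup \Delta_2$ with the connection set of Definition~\ref{def:hamming-union} for $z = 6$ and $\left\{\mu_1, \ldots, \mu_6\right\} = \left\{1, 2, 8, 9, 15, 16\right\}$, invoke Proposition~\ref{prop:automorphism-group-affine-decomp} to confine $\Aut{\left(\Gamma\right)}$ to $\left(V \otimes W\right) \rtimes \left(\GL{\left(2,17\right)} \circ \GL{\left(m,17\right)}\right)$, and finish by computing the stabiliser of the six relevant one-spaces. The only divergence is at the last step, where the paper simply asserts that the stabiliser of $V_{1,2}$ in $\GL{\left(2,17\right)}$ is $D_8$ ``computationally,'' whereas you sketch a cross-ratio verification in $\PGL{\left(2,17\right)}$ and are more explicit about absorbing scalars into the $\GL{\left(m,17\right)}$ factor; both amount to the same finite check.
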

\begin{proof}
Letting $\Delta = \Delta_1 \cup \Delta_2$, we have $\Gamma = \Cay{\left(V \otimes W,\ \Delta \right)}$. If we define $\mu_1 = 1$, $\mu_2 = 2$, $\mu_3 = 8$, $\mu_4 = 9$, $\mu_5 = 15$ and $\mu_6 = 16$, then
\begin{displaymath}
    \Delta = {\left\{\left(e_1 + \mu_i e_2 \right) \otimes w \mid i \in {\left\{1, 2, 3, 4, 5, 6 \right\}},\ w \in W \setminus \left\{0 \right\}\right\}},
\end{displaymath}
so Proposition~\ref{prop:automorphism-group-affine-decomp} asserts that $\Aut{\left(\Gamma \right)} \leqslant \left(V \otimes W \right) \rtimes \left(\GL{\left(2, 17 \right)} \circ \GL{\left(m, 17 \right)}\right)$. Letting $A \circ B \in \Aut{\left(\Gamma \right)}_0$, it suffices to show that $A \circ B \in G_0$. We know from Lemma~\ref{lm:endomorphism-automorphism} that $A \circ B$ preserves $\Delta$. It follows that $A$ must preserve the set
\begin{displaymath}
    V_{1,2} = \bigcup_{i=1}^6 \left<e_1 + \mu_i e_2 \right> \setminus {\left\{0 \right\}}.
\end{displaymath}
It can be checked computationally that the stabiliser of $V_{1,2}$ in $\GL{\left(2, 17 \right)}$ is precisely $D_8$, so $A \circ B$ lies in $G_0$ as required.
\end{proof}

\section{Fields of prime order}

In this section, let $G = G{\left(m, p \right)}$ for some prime $p \geqslant 5$, and suppose that $\Aut{\left(\Gamma_\lambda \right)} > G$ for some $\lambda \in \GF{\left(p \right)}$ with $\lambda^4 \notin \left\{0, 1 \right\}$. Then there must exist some $\theta \in \Aut{\left(\Gamma_\lambda \right)} \setminus G$ that fixes the zero vector, and from Proposition~\ref{prop:automorphism-group-affine-decomp} we have that $\theta = A \circ B$ for some $A \in \GL{\left(2, p \right)}$ and $B \in \GL{\left(m, p \right)}$. Since $\theta$ preserves $\Delta_\lambda$ by Lemma~\ref{lm:endomorphism-automorphism}, it follows that $A$ must preserve the set
\begin{displaymath}
V_\lambda = \left<\left(1,\ \lambda \right)\right> \cup \left<\left(1,\ -\lambda \right)\right> \cup \left<\left(1,\ \lambda^{-1} \right)\right> \cup \left<\left(1,\ -\lambda^{-1} \right)\right> \subseteq V,
\end{displaymath}
where the vectors are written with respect to the basis $\left\{e_1, e_2 \right\}$.

Since $A$ is a linear transformation of $V$, it must permute the one-spaces $\left<\left(1,\ \lambda \right)\right>$, $\left<\left(1,\ -\lambda \right)\right>$, $\left<\left(1,\ \lambda^{-1} \right)\right>$ and $\left<\left(1,\ -\lambda^{-1} \right)\right>$. If we label these one-spaces respectively as $P$, $Q$, $R$ and $S$, then we can say $A$ induces some permutation $\sigma \in \Sym(\left\{P, Q, R, S \right\})$.

Suppose, for contradiction, that $\sigma$ lies in the permutation group $V_4$, defined as
\begin{displaymath}
V_4 = {\left\{(),\ {\left(PQ \right)}{\left(RS \right)},\ {\left(PR \right)}{\left(QS \right)},\ {\left(PS \right)}{\left(QR \right)} \right\}}.
\end{displaymath}
The one-spaces $P$, $Q$, $R$ and $S$ can be considered as points in the projective space $\PG{\left(1, p \right)}$. Since $\PGL{\left(2, p \right)}$ acts sharply $3$-transitively on $\PG{\left(1, p \right)}$, no two collineations in $\PGL{\left(2, p \right)}$ induce the same permutation of the points $\left\{P, Q, R, S \right\}$.

Define the $2 \times 2$ matrix $M$ as in Table~\ref{tbl:V_4-collineations}. For each of the four possibilities for $\sigma$, it is easy to check that the collineation $M$ induces the permutation $\sigma$. Moreover, observe that $M \in D_8$ for any $\sigma \in V_4$.

\begin{table}
\centering
\begin{tabular}{cc}
\hline
$\sigma$ & $M$ \\
\hline
$()$ & $\begin{bmatrix} 1 & 0 \\ 0 & 1 \end{bmatrix}$ \\
${\left(PQ \right)}{\left(RS \right)}$ & $\begin{bmatrix} 1 & 0 \\ 0 & -1 \end{bmatrix}$ \\
${\left(PR \right)}{\left(QS \right)}$ & $\begin{bmatrix} 0 & 1 \\ 1 & 0 \end{bmatrix}$ \\
${\left(PS \right)}{\left(QR \right)}$ & $\begin{bmatrix} 0 & -1 \\ 1 & 0 \end{bmatrix}$ \\
\hline
\end{tabular}
\caption{Collineations $M$ that permute $\left\{P, Q, R, S \right\}$ by the permutation $\sigma \in V_4$.}
\label{tbl:V_4-collineations}
\end{table}

Since the collineation $A$ also induces the permutation $\sigma$, we must have $A = kM$ for some $k \in \GF{\left(p \right)}$. But then
\begin{displaymath}
\theta = \left(kM \right) \circ B = M \circ \left(kB \right) \in D_8 \circ \GL{\left(m, p \right)} \leqslant G,
\end{displaymath}
which contradicts our requirement that $\theta \notin G$. Hence $\sigma \notin V_4$.

Consider now the cross-ratio $r = \mathcal{R}{\left(P, Q; R, S \right)}$ of our four projective points. Choosing the reference vectors $u = \left(1, 0 \right)$ and $v = \left(0, 1 \right)$, we find the cross-ratio to be
\begin{align*}
r = \mathcal{R}{\left(P, Q; R, S \right)} &= \mathcal{R}{\left(P_{\left(\lambda \right)}, P_{\left(-\lambda \right)}; P_{\left(\lambda^{-1} \right)}, P_{\left(-\lambda^{-1} \right)}\right)} \\
&= \frac{\left(\lambda^{-1} - \lambda \right) \left(-\lambda^{-1} + \lambda \right)}{\left(\lambda^{-1} + \lambda \right) \left(-\lambda^{-1} - \lambda \right)} \\
&= \frac{\left(\lambda^2 - 1 \right)^2}{\left(\lambda^2 + 1 \right)^2}.
\end{align*}

Since $\sigma \notin V_4$, Lemma~\ref{lm:permute-cross-ratio} gives the following restriction on the cross-ratio $r'$ of the permuted points:
\begin{align} \label{eqn:cross-ratio-possibilities}
r' = \mathcal{R}{\left(P^\sigma, Q^\sigma; R^\sigma, S^\sigma \right)} \in {\left\{\frac{1}{r},\ 1-r,\ \frac{r}{r-1},\ \frac{1}{1-r},\ \frac{r-1}{r} \right\}}.
\end{align}

Moreover, $A$ must preserve the cross-ratio of our four points as it is a collineation of $\PG{\left(1, p \right)}$. Since $A$ induces the permutation $\sigma$, we have
\begin{align} \label{eqn:cross-ratio-preserved}
r' = \mathcal{R}{\left(P^\sigma, Q^\sigma; R^\sigma, S^\sigma \right)} = \mathcal{R}{\left(P, Q; R, S \right)} = r.
\end{align}

From (\ref{eqn:cross-ratio-possibilities}) and (\ref{eqn:cross-ratio-preserved}), we have the following five cases to examine: \\\\
Case 1: $r = \frac{1}{r}$. Then
\begin{displaymath}
1 - r^2 = 0 \implies \frac{8 \lambda^2 \left(\lambda^4 + 1 \right)}{\left(\lambda^2 + 1 \right)^4} = 0 \implies \lambda^4 + 1 = 0.
\end{displaymath}
Case 2: $r = 1-r$. Then
\begin{displaymath}
2r - 1 = 0 \implies \frac{\lambda^4 - 6 \lambda^2 + 1}{\left(\lambda^2 + 1 \right)^2} = 0 \implies \lambda^4 - 6 \lambda^2 + 1 = 0.
\end{displaymath}
Case 3: $r = \frac{r}{r-1}$. Then
\begin{align*}
2r - r^2 = 0 &\implies \frac{\left(\lambda^4 + 6 \lambda^2 + 1 \right) \left(\lambda + 1 \right)^2 \left(\lambda - 1 \right)^2}{\left(\lambda^2 + 1 \right)^4} = 0 \\ &\implies \lambda^4 + 6 \lambda^2 + 1 = 0.
\end{align*}
Case 4: $r = \frac{1}{1-r}$. Then
\begin{displaymath}
r^2 - r + 1 = 0 \implies \frac{\lambda^8 + 14 \lambda^4 + 1}{\left(\lambda^2 + 1 \right)^4} = 0 \implies \lambda^8 + 14 \lambda^4 + 1 = 0.
\end{displaymath}
Case 5: $r = \frac{r-1}{r}$. This is in fact the same equation as in Case 4.

The result we have just proved is summarised in the following lemma.
\begin{lemma} \label{lm:Lambda_lambda}
Let $\lambda \in \GF{\left(p \right)}$ with $\lambda^4 \notin \left\{0, 1 \right\}$. If $\Aut{\left(\Gamma_\lambda \right)} \neq G$, then
\begin{displaymath}
0 \in {\left\{\lambda^4 + 1,\ \lambda^4 \pm 6 \lambda^2 + 1,\ \lambda^8 + 14 \lambda^4 + 1 \right\}}.
\end{displaymath}
\end{lemma}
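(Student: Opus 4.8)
The plan is to prove the implication directly: assuming $\Aut{\left(\Gamma_\lambda \right)} \neq G$, I extract the polynomial constraints on $\lambda$. First I would note that the hypothesis $\lambda^4 \notin \{0, 1\}$ ensures the four scalars $\lambda, -\lambda, \lambda^{-1}, -\lambda^{-1}$ are pairwise distinct (equality of any two would force $\lambda = 0$ or $\lambda^4 = 1$), so $\Delta_\lambda$ has exactly the form of the set $\Delta$ in Definition~\ref{def:hamming-union} with $z = 4$; since $p \geqslant 5 > 4$, Proposition~\ref{prop:automorphism-group-affine-decomp} applies to $\Gamma_\lambda$. As $G$ is always contained in the automorphism group of its orbital digraph, the hypothesis yields a strict containment, so there is an automorphism $\theta \notin G$; composing with a suitable translation (which lies in $V \otimes W \leqslant G$) I may assume $\theta$ fixes $0$, and the proposition then gives $\theta = A \circ B$ with $A \in \GL{\left(2, p \right)}$ and $B \in \GL{\left(m, p \right)}$. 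By Lemma~\ref{lm:endomorphism-automorphism}, $\theta$ preserves $\Delta_\lambda$, which forces $A$ to preserve the set $V_\lambda$ of four one-spaces and hence to induce a permutation $\sigma$ of the corresponding projective points $P, Q, R, S \in \PG{\left(1, p \right)}$.

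The first key step is to rule out $\sigma \in V_4$. For this I would exhibit, for each of the four permutations in $V_4$, an explicit matrix $M \in D_8$ inducing it (as recorded in Table~\ref{tbl:V_4-collineations}), so that $M$ and $A$ act identically on $\{P, Q, R, S\}$. Since $\PGL{\left(2, p \right)}$ acts sharply $3$-transitively on $\PG{\left(1, p \right)}$, two collineations agreeing on four points agree projectively, forcing $A = kM$ for some scalar $k$. Absorbing $k$ into the second factor then gives $\theta = M \circ \left(kB \right) \in D_8 \circ \GL{\left(m, p \right)} = G_0$, contradicting $\theta \notin G$. Hence $\sigma \notin V_4$.

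The second key step combines two facts about $\sigma$. Because $A$ is a collineation it preserves cross-ratio, so $\mathcal{R}{\left(P^\sigma, Q^\sigma; R^\sigma, S^\sigma \right)} = \mathcal{R}{\left(P, Q; R, S \right)} =: r$, and a direct computation with reference vectors $\left(1, 0 \right)$ and $\left(0, 1 \right)$ gives $r = \left(\lambda^2 - 1 \right)^2 / \left(\lambda^2 + 1 \right)^2$. On the other hand, since $\sigma \notin V_4$, Lemma~\ref{lm:permute-cross-ratio} (via Table~\ref{tbl:permuted-cross-ratios}) restricts the permuted cross-ratio to the five values $1/r$, $1-r$, $r/(r-1)$, $1/(1-r)$, $(r-1)/r$. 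Equating $r$ to each in turn, clearing denominators, and substituting the formula for $r$ produces a polynomial equation in $\lambda$; the five cases collapse to $\lambda^4 + 1 = 0$, $\lambda^4 - 6\lambda^2 + 1 = 0$, $\lambda^4 + 6\lambda^2 + 1 = 0$, and $\lambda^8 + 14\lambda^4 + 1 = 0$ (the last two cases yielding the same octic), giving the stated membership $0 \in \{\lambda^4 + 1,\ \lambda^4 \pm 6\lambda^2 + 1,\ \lambda^8 + 14\lambda^4 + 1\}$.

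I expect the genuine obstacle to be the $\sigma \notin V_4$ argument rather than the closing algebra. The five factorisations are routine once $r$ is in hand, whereas excluding $V_4$ is exactly where the internal structure of $G$ is used: one must know that $D_8$ already realises every linear map inducing a $V_4$-permutation of these four particular points, so that sharp $3$-transitivity pins $A$ down up to a scalar and collapses $\theta$ back into $G$. This is the step that prevents the cross-ratio constraint from being vacuously avoidable and thus drives the whole lemma.
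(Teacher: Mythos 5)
Your proposal is correct and follows essentially the same route as the paper: reduce to a vertex-stabilising automorphism $A \circ B$ via Proposition~\ref{prop:automorphism-group-affine-decomp}, exclude $\sigma \in V_4$ using the $D_8$ representatives and sharp $3$-transitivity of $\PGL{\left(2, p \right)}$, and then combine cross-ratio invariance with Lemma~\ref{lm:permute-cross-ratio} to extract the polynomial conditions. The extra care you take at the outset (checking that $\lambda, -\lambda, \lambda^{-1}, -\lambda^{-1}$ are pairwise distinct so that $z = 4$ applies) is a detail the paper leaves implicit, but the argument is the same.
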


This leads us to our final result.

\begin{theorem} \label{thm:q-large-prime}
Let $p$ be an odd prime and let $m \geqslant 2$. If $p \notin \left\{3,\, 5,\, 7,\, 13 \right\}$ then $G = G{\left(m, p \right)}$ is the automorphism group of a digraph.
\end{theorem}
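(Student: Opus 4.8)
The plan is to exploit Lemma~\ref{lm:Lambda_lambda} in its contrapositive form. Since $G$ preserves each of its orbitals, we always have $G \leqslant \Aut(\Gamma_\lambda)$, so to prove the theorem it suffices to exhibit a single orbital digraph whose automorphism group collapses back onto $G$. Concretely, if we can find some $\lambda \in \GF(p)$ with $\lambda^4 \notin \{0,1\}$ for which none of $\lambda^4+1$, $\lambda^4 \pm 6\lambda^2 + 1$, $\lambda^8 + 14\lambda^4+1$ vanishes, then Lemma~\ref{lm:Lambda_lambda} forces $\Aut(\Gamma_\lambda) = G$, and $\Gamma_\lambda$ is the required digraph. (Recall that Proposition~\ref{prop:automorphism-group-affine-decomp} is what makes Lemma~\ref{lm:Lambda_lambda} applicable in the first place.)

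First I would dispose of all sufficiently large primes by a counting argument. The values of $\lambda$ to be avoided are exactly the roots in $\GF(p)$ of the polynomial
\[
f(\lambda) = \lambda(\lambda^4 - 1)(\lambda^4+1)(\lambda^4 + 6\lambda^2 + 1)(\lambda^4 - 6\lambda^2 + 1)(\lambda^8 + 14\lambda^4 + 1),
\]
where the factor $\lambda(\lambda^4-1)$ records the side condition $\lambda^4 \notin \{0,1\}$. This $f$ is a nonzero monic polynomial of degree $25$, so it has at most $25$ roots in $\GF(p)$. Hence whenever $p > 25$ — that is, for every prime $p \geqslant 29$ — some field element fails to be a root, giving a suitable $\lambda$ and thus $\Aut(\Gamma_\lambda) = G$.

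It then remains to treat the finitely many primes below this threshold that are not already excluded, namely $p \in \{11, 17, 19, 23\}$. For $p \in \{11, 19, 23\}$ I would simply check by direct substitution that $\lambda = 2$ meets all the requirements: in each case $\lambda^4 \notin \{0,1\}$ and a short computation shows the four polynomial expressions are nonzero, so again $\Aut(\Gamma_\lambda) = G$. The prime $p = 17$, however, is genuinely exceptional: there every value of $\lambda^2$ (necessarily a nonzero square) turns out to be a root of one of the relevant factors, so \emph{no} individual orbital digraph $\Gamma_\lambda$ has automorphism group $G$. For this case I would instead invoke Theorem~\ref{thm:q=17}, which exhibits $G(m,17)$ as the automorphism group of the union $\Gamma_1 \cup \Gamma_2$, a digraph, completing the proof.

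The main obstacle is precisely the prime $p = 17$. The clean degree-$25$ counting argument only reaches primes $p \geqslant 29$, and among the three smaller survivors the case $17$ cannot be handled by any single $\Gamma_\lambda$ at all. Recognising this failure — and that it is remedied by passing to a union of two orbital digraphs — is the crux of the argument, which is exactly why Theorem~\ref{thm:q=17} was established beforehand. By contrast, the remaining small primes $11$, $19$ and $23$ are routine once the right single value of $\lambda$ is produced.
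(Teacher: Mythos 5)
Your proof is correct, and it rests on the same two pillars as the paper's: the contrapositive of Lemma~\ref{lm:Lambda_lambda} to certify $\Aut{\left(\Gamma_\lambda\right)} = G$ for a well-chosen $\lambda$, and Theorem~\ref{thm:q=17} to rescue the genuinely exceptional prime $17$. Where you differ is in how the witness $\lambda$ is produced. The paper avoids any root counting or case split on small primes: it evaluates the four polynomial expressions at the two fixed values $\lambda = 2$ and $\lambda = 4$ over $\mathbb{Z}$, obtaining the integer sets $\left\{17, 41, -7, 481\right\}$ and $\left\{257, 353, 161, 69121\right\}$, and argues that if neither $\Gamma_2$ nor $\Gamma_4$ had automorphism group $G$ then $p$ would have to divide a member of each set; the only primes that do are $7$ and $13$. (The hypothesis $\lambda^4 \notin \left\{0,1\right\}$ is what forces $p \neq 3, 5$ for $\lambda = 2$ and $p \neq 17$ for $\lambda = 4$, since $4^4 = 256 \equiv 1 \pmod{17}$.) Your degree-$25$ bound buys uniformity for $p \geqslant 29$ at the cost of checking $p \in \left\{11, 19, 23\right\}$ by hand, which you do correctly with $\lambda = 2$; the paper's simultaneous-divisibility trick handles those three primes and all larger ones in one stroke. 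Both arguments are valid and of comparable length. One small imprecision: for $p = 17$ you assert that, because every admissible $\lambda$ is a root of one of the factors, no individual orbital digraph $\Gamma_\lambda$ has automorphism group $G$. Lemma~\ref{lm:Lambda_lambda} is only a one-way implication, so what actually follows is that the criterion fails to \emph{certify} any single $\Gamma_\lambda$, not that every $\Gamma_\lambda$ necessarily admits extra automorphisms. This does not affect your proof, since you (like the paper) dispose of $p = 17$ via the union $\Gamma_1 \cup \Gamma_2$ and Theorem~\ref{thm:q=17}.
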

\begin{proof}
We have seen in Theorem~\ref{thm:q=17} that $G$ is the automorphism group of a digraph if $p = 17$, so suppose that $p \notin \left\{3, 5, 7, 13, 17 \right\}$ and assume for contradiction that $G$ is not the automorphism group of any digraph. It follows that the orbital digraphs $\Cay{\left(V \otimes W,\ \Delta_2 \right)}$ and $\Cay{\left(V \otimes W,\ \Delta_4 \right)}$ must have automorphisms $\theta_2$ and $\theta_4$ respectively that do not lie in $G$. Since $2^4,\, 4^4 \notin \left\{0, 1 \right\}$, Lemma~\ref{lm:Lambda_lambda} gives us that $0 \in \left\{17,\, 41,\, -7,\, 481 \right\}$ and $0 \in \left\{257,\ 353,\ 161,\ 69121 \right\}$. Equivalently, both of these sets must contain a multiple of $p$. This only holds for $p=7$ and $p=13$, a contradiction.
\end{proof}

\end{document}